\newtheorem{theorem}{Theorem}[section]      
\newtheorem{lemma}{Lemma}[section]
\newtheorem{proposition}{Proposition}[section]       
\newtheorem*{main theorem}{Main Theorem}
\theoremstyle{remark}    
\theoremstyle{definition}  
\newtheorem{definition}{Definition}[section]   
\def\cQ{\mathcal{Q}}
\def\n{\mathbf{n}}
\def\i{\textnormal{i}}
\def\T{\mathbb{T}}         
\def\N{\mathbb{N}}     
\def\R{\mathbb{R}}     
\def\Z{\mathbb{Z}}
\def\V{\mathcal{V}}     
\def\C{\mathbb{C}}  
\def\P{\mathcal{P}}
\def\S{\mathcal{S}}
\def\bar#1{\overline{#1}} 
\begin{document}
\title{Difference Sets and Polynomials } 
\author{Neil Lyall \quad\quad\quad Alex Rice }

\address{Department of Mathematics, The University of Georgia, Athens, GA 30602, USA}
\email{lyall@math.uga.edu}
\address{Department of Mathematics, University of Rochester, Rochester, NY 14627 USA}
\email{alex.rice@rochester.edu} 
\subjclass[2000]{11B30}
\begin{abstract} We provide upper bounds on the largest subsets of $\{1,2,\dots,N\}$ with no differences of the form $h_1(n_1)+\cdots+h_{\ell}(n_{\ell})$ with $n_i\in \N$ or $h_1(p_1)+\cdots+h_{\ell}(p_{\ell})$ with $p_i$ prime, where $h_i\in \Z[x]$ lie in in the classes of so-called intersective and $\P$-intersective polynomials, respectively. For example, we show that a subset of $\{1,2,\dots,N\}$  free of nonzero differences of the form $n^j+m^k$ for fixed $j,k\in \N$  has density at most $e^{-(\log N)^{\mu}}$ for some $\mu=\mu(j,k)>0$. Our results, obtained by adapting two Fourier analytic, circle method-driven strategies, either recover or improve upon all previous results for a single polynomial. 
\end{abstract}
\maketitle
\setlength{\parskip}{7pt}   

\section{Introduction}  
\subsection{Background} Lov\'asz posed the following question: If $A\subseteq \N$ contains no pair of distinct elements which differ by a perfect square, must it be the case that $$\lim_{N\to \infty} \frac{|A\cap [1,N]|}{N}=0 \ ?$$ Here and throughout we use $[1,N]$ to denote $\{1,2,\dots,N\}$. Erd\H{o}s posed the analogous question with ``perfect square" replaced by ``one less than a prime number". Furstenberg \cite{Furst} answered the former question in the affirmative via ergodic theory, specifically his correspondence principle, but obtained no quantitative information on the rate at which the density must decay. Independently, S\'ark\"ozy \cite{Sark1, Sark3}  showed via Fourier analysis, specifically a density increment argument driven by the Hardy-Littlewood circle method, that if $A\subseteq [1,N]$ contains no nonzero square differences, then \begin{equation}\label{sark1bound} \frac{|A|}{N} \ll \Big(\frac{(\log \log N)^2}{\log N}\Big)^{1/3},\end{equation} while if $a-a'\neq p-1$ (or $p+1$) for all $a,a'\in A$ and all primes $p$, then \begin{equation}\label{sark3bound} \frac{|A|}{N} \ll \frac{(\log\log\log N)^3\log\log\log\log N}{(\log \log N)^2}.\end{equation} We use ``$\ll$" to denote ``less than a constant times", with subscripts indicating what parameters, if any, the implied constant depends on.

\subsection{Improvements and extensions} Using a more intricate Fourier analytic argument, Pintz, Steiger, and Szemer\'edi \cite{PSS} improved (\ref{sark1bound}) to \begin{equation}\label{PSSbound} \frac{|A|}{N} \ll (\log N)^{-c\log\log\log\log N}, \end{equation} with $c=1/12$, and by incorporating more delicate analytic number theory results into S\'ark\"ozy's original method, Ruzsa and Sanders \cite{Ruz} dramatically improved (\ref{sark3bound}) to \begin{equation*}\frac{|A|}{N} \ll e^{-c(\log N)^{1/4}}, \end{equation*} where $c>0$ is an absolute constant.

\noindent A natural generalization of Lov\'asz's question is to extend from perfect squares to the image of more general polynomials. Balog, Pelikan, Pintz, and Szemer\'edi \cite{BPPS} extended (\ref{PSSbound}) to sets with no $k^{\text{th}}$ power differences for a fixed $k\in \N$, with $c=1/4$ and the implied constant depending on $k$. 

\noindent More generally, to hope for such a result for a given  polynomial $h \in \Z[x]$, it is clearly necessary that $h$ has a root modulo $q$ for every $q \in \N$, as otherwise there is a set $q\N$ with positive density and no differences in the image of $h$. It follows from a theorem of Kamae and Mend\`es France \cite{KMF} that this condition is also sufficient, in a qualitative sense, and in this case we say that $h$ is an \textit{intersective polynomial}. Equivalently, a polynomial is intersective if it has a $p$-adic integer root for every prime $p$. Intersective polynomials include any polynomial with an integer root and any polynomial with two rational roots with coprime denominators, but there are also intersective polynomials with no rational roots, such as $(x^3-19)(x^2+x+1)$. 

\noindent It is a theorem of Lucier \cite{Lucier}, with minor improvements exhibited by the first author and Magyar \cite{LM} and the second author \cite{thesis}, that if $h\in \Z[x]$ is an intersective polynomial of degree $k\geq 2$ and $A\subseteq[1,N]$ has no nonzero differences in the image of $h$, then \begin{equation*}\frac{|A|}{N} \ll_h \Big(\frac{\log\log N}{\log N}\Big)^{1/(k-1)}. \end{equation*} Further, Hamel and the authors \cite{HLR} extended (\ref{PSSbound}) to all intersective polynomials of degree two, for any $c<1/\log(3)$ and the implied constant depending on $c$ and the polynomial. 

\noindent To hybridize the aforementioned results, one could ask for a density bound on a set free of differences of the form $h(p)$, for a fixed $h\in \Z[x]$ and $p$ prime, but this requires further restrictions on the polynomial. Specifically, for every $q\in \N$, there must exist $r\in \Z$ with $(r,q)=1$ and $q\mid h(r)$, as otherwise $h(p)$ is divisible by $q$ for only finitely many primes $p$, and hence $mq\N$ has no differences of the form $h(p)$ for sufficiently large $m$. If this condition is satisfied we say that $h$ is a $\P$\textit{-intersective polynomial}. Equivalently, a polynomial is $\P$-intersective if for every prime $p$ it has a $p$-adic integer root that does not reduce to $0$ modulo $p$. Examples include any polynomial with a root at $1$ or $-1$, and any polynomial with rational roots $a/b$ and $c/d$ satisfying $(ab,cd)=1$, while  $(x^3-19)(x^2+x+1)$ again serves as an example free of rational roots. 

\noindent The second author \cite{Rice} showed that if $h\in\Z[x]$ is $\P$\textit{-intersective} of degree $k\geq 2$ and $\epsilon>0$, then a set $A\subseteq [1,N]$ with no nonzero differences of the form $h(p)$ satisfies $$\frac{|A|}{N} \ll_{h,\epsilon} (\log N)^{-\frac{1}{2k-2}+\epsilon},$$ and further (\ref{PSSbound}) holds with $c=(2\log(3))^{-1}-\epsilon$ if $k=2$.

\noindent Here we have only alluded to the best-known results in each case, all established through versions of the two aforementioned Fourier analytic attacks. For intermediate results and alternative proofs, the reader may refer to \cite{Green}, \cite{Slip}, \cite{LM}, \cite{Lucier2}, \cite{lipan}, \cite{Lyall}, and \cite{taoblog}.

\subsection{Main results} Here we adapt the known Fourier analytic strategies to handle differences of the form $h_1(n_1)+\cdots+h_{\ell}(n_{\ell})$ for a collection of polynomials $h_1,\dots,h_{\ell} \in \Z[x]$, as well as incorporate exponential sum estimates of Shparlinski \cite{Shpar} previously unused in this context, to establish the following result.

\begin{theorem}\label{main1} Suppose $\ell_1,\ell_2,\ell_3 \geq 0$ are integers and let $\ell=\ell_1+\ell_2+\ell_3$. 

\noindent Suppose $h_1,\dots, h_{\ell_1} \in \Z[x]$ are nonzero intersective polynomials, $h_{\ell_1+1},\dots,h_{\ell_1+\ell_2} \in \Z[x]$ are nonconstant monomials, and $h_{\ell_1+\ell_2+1},\dots, h_{\ell}\in \Z[x]$ are nonmonomials with $h_i(0)=0$ for $\ell_1+\ell_2+1\leq i \leq \ell$. 

\noindent Let $$D=\Big(\sum_{i=1}^{\ell_1} k_i^{-1}+\ell_2/2+\sum_{i=\ell_1+\ell_2+1}^{\ell} r_i^{-1}\Big)^{-1},$$ where $k_i=\deg(h_i)$ and $r_i$ is the number of nonzero coefficients of $h_i$. 

\noindent Suppose further that $A\subseteq [1,N]$ and $$a-a' \neq \sum_{i=1}^{\ell} h_i(n_i)$$ for all distinct pairs $a,a'\in A$ and for all $n_1,\dots, n_{\ell} \in \N$ with $h_1(n_1),\dots, h_{\ell}(n_{\ell}) \neq 0$.

\noindent Then, \begin{equation*}\frac{|A|}{N} \ll_{h_1,\dots, h_{\ell}, \epsilon} \begin{cases}  (\log N)^{-\frac{1}{2(D-1)}+\epsilon} & \text{if } D>2 \text{ and } \ell_3>0, \text{ for any }\epsilon>0 \\\\  \Big(\frac{\log\log N}{\log N}\Big)^{1/(D-1)} & \text{if } D>2 \text{ and } \ell_3=0 \\\\  (\log N)^{-\mu\log\log\log\log N} & \text{if }  1<D\leq2 \\\\ e^{-(\log N)^{\mu}} &\text{if } D=1 \text{ and } \ell_2+\ell_3>0 \\\\ e^{-c(\log N)^{1/4}} &\text{if } D<1 \text{ and } \ell_2+\ell_3>0  \\\\ e^{-c\sqrt{\log N}} & \text{if } D\leq 1 \text{ and } \ell_2=\ell_3=0  \end{cases},
\end{equation*}
where $c=c(h_1,\dots,h_{\ell})>0$ and $\mu=\mu(k_1,\dots,k_{\ell})>0$.
\end{theorem}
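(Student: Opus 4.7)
The plan is to run the two circle-method density-increment strategies described in the introduction in parallel, letting the regime of $D$ and the presence of a polynomial with $h_i(0)=0$ (i.e., $\ell_2+\ell_3>0$) decide which refinement is applicable. For each $i$, I select a weighted input set $Y_i \subseteq [1, N^{1/k_i}]$ adapted to the $p$-adic root structure of $h_i$---at roots in $p\Z_p$ for intersective $h_i$, and at $0$ for monomials and for the nonmonomials in the third class---so that $h_i(Y_i)\subseteq[-N,N]\setminus\set{0}$ and the exponential sum $T_i(\alpha)=\sum_n w_i(n)e(h_i(n)\alpha)$ admits the standard major/minor arc dichotomy. Writing $T(\alpha)=\prod_i T_i(\alpha)$ and $\delta=|A|/N$, the hypothesis on $A$ forces $\int_0^1 |\widehat{1_A}(\alpha)|^2 T(\alpha)\,d\alpha=0$, and the entire argument rests on comparing major and minor arc contributions to this integral.

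Split $[0,1]=\mathfrak{M}\sqcup\mathfrak{m}$ at height $Q=Q(\delta)$. On $\mathfrak{M}$, each $T_i(a/q+\beta)$ factors into a local Gauss-type sum (nonvanishing by the intersectivity of $h_i$) times an archimedean integral; multiplying across $i$ produces a main term of the expected size. On $\mathfrak{m}$, I combine three exponential sum inputs: Weyl's inequality refined by the $p$-adic localization of Lucier \cite{Lucier}, contributing $k_i^{-1}$ to the exponent $1/D$ for intersective $h_i$; a Gauss-sum / Hua-type estimate for monomials $h_i(x)=c_i x^{k_i}$, contributing $1/2$ per monomial; and Shparlinski's bound \cite{Shpar} for nonmonomials with $h_i(0)=0$, contributing $r_i^{-1}$ with an unavoidable $\epsilon$ loss. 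The product yields $\|T\|_{L^\infty(\mathfrak{m})}\ll N^{-1/D+o(1)}\prod_i N_i$, the direct source of the parameter $D$.

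Nonvanishing of the main term then forces either an honest solution (contradicting the hypothesis) or a large Fourier coefficient $|\widehat{1_A}(a/q)|\gg \delta^{1+\kappa}$ at some $q\ll \delta^{-O(1)}$. A standard argument produces an arithmetic progression of length $\gg N/q^{O(1)}$ on which $A$ has relative density $\delta+\Omega(\delta^{1+\kappa})$; restricting and rescaling, one replaces each $h_i$ by an auxiliary polynomial $\widetilde h_i(x)=h_i(qx+r_i)/d_i$ built from its $p$-adic root data, which remains of the same type. Iterating gives the bounds in cases one and two. For $1<D\le 2$ the increment saturates, and I graft on the Pintz--Steiger--Szemer\'edi trick of concentrating $|\widehat{1_A}|^2$ on intervals of polynomial length, which effectively squares the density-increment exponent per step and produces the $(\log N)^{-\mu\log\log\log\log N}$ bound. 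For $D\le 1$ with $\ell_2+\ell_3>0$, the presence of a polynomial with a root at $0$ permits the Ramar\'e--Ruzsa sieved majorant of Ruzsa--Sanders \cite{Ruz}, whose improved $L^2$ norm drives the bound to $e^{-c(\log N)^{1/4}}$, with $D=1$ interpolating at $e^{-(\log N)^\mu}$; without a root at $0$ ($\ell_2=\ell_3=0$) one runs PSS unsieved and obtains the classical $e^{-c\sqrt{\log N}}$ bound.

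The principal obstacle is coordinating the iteration across $\ell$ polynomials of three different flavors. Each density-increment step must simultaneously produce auxiliary polynomials $\widetilde h_i$ that remain, respectively, intersective, monomial, and nonmonomial-with-$0$-root, and the input sets $Y_i$ and weights $w_i$ must all be adjusted consistently to the new modulus arising from the progression. A uniform $p$-adic book-keeping across all $\ell$ polynomials, combined with a careful isolation of the Shparlinski $\epsilon$ loss (responsible for the $\epsilon$ in the first case of the theorem), is the main technical work; once set up, the six case-distinctions reduce to appropriate adaptations of the iteration schemes of \cite{Lucier}, \cite{PSS}, and \cite{Ruz}.
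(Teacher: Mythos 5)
Your overall architecture matches the paper's: a density increment over auxiliary polynomials $h_i^d(x)=h_i(r_i^d+dx)/\lambda_i(d)$ built from fixed $p$-adic roots, the three classes of Gauss-sum inputs (standard/Lucier giving $1/k_i$, the BPPS $W$-trick giving $1/2$ for monomials, Shparlinski giving $1/r_i$ for sparse polynomials), and the Pintz--Steiger--Szemer\'edi double iteration reserved for $1<D\leq 2$. But two of your mechanisms are wrong in ways that would derail the exponents, which are the whole content of the theorem.

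First, you locate the source of $D$ in a minor-arc $L^\infty$ bound, claiming $\|T\|_{L^\infty(\mathfrak{m})}\ll N^{-1/D+o(1)}\prod_i N_i$. No such power saving in $N$ holds on the minor arcs: for $\alpha$ just outside the major arcs, i.e.\ near $a/q$ with $q$ slightly larger than the cutoff $Q=\delta^{-O(1)}$, the asymptotic formula still applies and the saving is only $Q^{-1/D}=\delta^{O(1)}$, which is all the argument needs there (the paper's minor-arc bound is simply $|S(\alpha)|\leq\delta\tilde M/16$). The exponent $1/D$ lives on the \emph{major} arcs, as the $q$-dependence $|S(\alpha)|\ll C^{\omega(q)}q^{-1/D}\tilde M$ for $\alpha$ near $a/q$, and the density-increment exponent $\delta^{D}$ is extracted by an $L^2$ pigeonholing over denominators: one bounds $\sum_{q\leq Q}q\,b(q)$ with $b(q)=C^{\omega(q)}q^{-1/D}$ and selects a single $q$ with $\int_{\mathbf{M}'_q(\gamma)}|\widehat{f_A}|^2\gg\delta^{D+1+2\epsilon}N$ (Proposition \ref{rstrick} and (\ref{bqavg}) in the paper). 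Your alternative --- pigeonholing to a single large Fourier coefficient $|\widehat{1_A}(a/q)|\gg\delta^{1+\kappa}$ --- loses against this: the concentrated $L^2$ mass is spread over an arc of measure $\approx q\gamma$, so the pointwise maximum it guarantees is too small to recover the increment $\delta\to\delta+c\delta^{D}$, and the final exponent $1/(D-1)$ would degrade.

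Second, your account of the $D\leq 1$ cases inverts the role of the sieve. The restriction to inputs coprime to $W=\prod_{p\leq\eta^{-(D'+\epsilon)}}p$ is what makes the monomial and Shparlinski Gauss-sum gains available at all, but it comes at a cost, not a benefit, to the iteration: the error term in counting integers coprime to $W$ in a progression (Halberstam--Richert, as in BPPS --- not the Ruzsa--Sanders prime-counting machinery, which belongs to Theorem \ref{main2}) forces the denominator/coefficient cutoff down from $\cQ=N^{c_1}$ to $\cQ=e^{c_1\sqrt{\log N}}$. Running $O(\log\delta^{-1})$ increments against the constraint $\delta^{-1}\leq\cQ(N_m)$ then yields $\delta\ll e^{-c\sqrt{\log N}}$ when $\ell_2=\ell_3=0$ but only $\delta\ll e^{-c(\log N)^{1/4}}$ when the sieve is in play; there is no ``improved $L^2$ norm'' driving the $1/4$ exponent, and PSS is not what is run in the unsieved $D\leq1$ case. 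As written, your proposal would not produce the correct case split among the last three bounds of the theorem.
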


\noindent  The statement of Theorem \ref{main1} is admittedly rather obnoxious, and the reader is encouraged to refer to Sections \ref{exp1} and \ref{exp2} for discussions of several digestible and illustrative special cases of the theorem, the reasoning for the specified partitioning of the collection of polynomials and its impact on the resulting bounds, and the origin and significance of the quantity $D$.  We also establish an analogous result in the prime input setting, where the statement is weaker but more straightforward. 

\begin{theorem}\label{main2} Suppose $h_1, \dots, h_{\ell} \in \Z[x]$ are $\P$-intersective polynomials with $\deg(h_i)=k_i>0$, and let $$D'=\Big(\sum_{i=1}^{\ell} k_i^{-1} \Big)^{-1}.$$ If $A\subseteq [1,N]$ and $$a-a' \neq \sum_{i=1}^{\ell} h_i(p_i)$$ for all distinct pairs $a,a' \in A$ and for all primes $p_1,\dots, p_{\ell}$ with $h_1(p_1),\dots, h_{\ell}(p_{\ell}) \neq 0$, then 
\begin{equation*}\frac{|A|}{N} \ll_{h_1,\dots, h_{\ell}, \epsilon} \begin{cases} (\log N)^{-\frac{1}{2(D'-1)}+\epsilon} & \text{if } D'>2, \text{ for any }\epsilon>0\\\\ (\log N)^{-\mu\log\log\log\log N} & \text{if } 1< D'\leq 2\\\\ e^{-(\log N)^{\mu}} & \text{if } D'=1, \  \ell>1\\\\ e^{-c(\log N)^{1/4}} & \text{if } D'< 1\text{ or } \ell=k_1=1\end{cases},
\end{equation*}  
where $c=c(h_1,\dots,h_{\ell})>0$ and $\mu=\mu(k_1,\dots,k_{\ell})>0$.
\end{theorem}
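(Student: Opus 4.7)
The plan is to mirror the density-increment / circle-method framework already developed for a single $\P$-intersective polynomial in \cite{Rice}, with two main upgrades: handle the sum $h_1(p_1)+\cdots+h_\ell(p_\ell)$ by working with a product of prime exponential sums, and use the exponent $D'$ in place of the single degree $k$ to gauge how much Fourier mass is placed on small-denominator rationals. Concretely, for each $i$ fix $M_i \approx N^{1/k_i}$ so that $h_i(p_i)\lesssim N$ when $p_i\leq M_i$, and study the weighted counting operator whose Fourier kernel is the product $F(\alpha)=\prod_{i=1}^\ell S_i(\alpha)$, with $S_i(\alpha)=\sum_{p\leq M_i}\Lambda(p)e(\alpha h_i(p))$ (or the unweighted analog). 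If $A\subseteq[1,N]$ avoids nonzero differences of the desired form, then, after subtracting off the main term one would obtain from $\alpha=0$, one derives a Fourier-mass inequality of the shape
\[
\int_0^1 |\hat{1}_A(\alpha)|^2 |F(\alpha)|\,d\alpha \;\gg\; \delta\,\E[F],
\]
where $\delta=|A|/N$. The $\P$-intersective hypothesis ensures $\E[F]$ does not vanish modulo arithmetic obstructions.

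Next, I would split $[0,1]$ into major arcs $\mathfrak{M}(Q)$ around rationals $a/q$ with $q\leq Q$ and the complementary minor arcs. On major arcs, standard asymptotics for $S_i$ over primes (via Siegel--Walfisz and partial summation, as in \cite{Rice}) give $S_i(a/q+\beta)\approx q^{-1}T_i(a/q)\widetilde{S}_i(\beta)$ with a well-behaved singular series factor, exploiting that the $\P$-intersective condition places each $h_i$ in a residue class coprime to $q$. On minor arcs, Vinogradov--Vaughan style bounds yield a power saving of the form $S_i(\alpha)\ll M_i(\log M_i)^{O(1)}q^{-1/2k_i}$ whenever $\alpha$ is well approximated by $a/q$ with moderate $q$; multiplicativity of $F$ then produces a power saving governed precisely by $\sum k_i^{-1}=1/D'$, which is the source of the exponent $D'$ in the theorem.

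The density increment is then executed as in the template: either the minor-arc contribution dominates $\delta\,\E[F]$, forcing $\delta$ to be small in the required range, or there is a rational $a/q$ with $q\leq Q$ at which $|\hat{1}_A|$ is large. A standard restriction-to-progressions step then passes to a sub-progression of common difference $q^*\cdot\mathrm{lcm}(1,\dots,Q^\star)$ on which $A$ has density at least $\delta(1+c)$, and the structure of the $\P$-intersective $p$-adic roots ensures the forbidden configuration persists after the dilation by this modulus. Iterating gives the bounds: the single-step inequality yields the $(\log N)^{-1/(2(D'-1))+\epsilon}$ regime when $D'>2$; the PSS-style double iteration with $L^2$-restriction/$\ell$-fold Fourier concentration gives the $(\log N)^{-\mu\log\log\log\log N}$ bound for $1<D'\leq 2$; a Bourgain-type Bohr-set refinement plugged into the same machinery gives the stretched-exponential bounds $e^{-(\log N)^\mu}$ and $e^{-c(\log N)^{1/4}}$ in the remaining cases, with the latter (including the degenerate case $\ell=k_1=1$) following the Ruzsa--Sanders strategy \cite{Ruz}.

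The main obstacle will be the exponential-sum control on minor arcs: obtaining Vinogradov-type estimates for $\sum_{p\leq M}e(\alpha h(p))$ that are uniform in the coefficients of $h$ and which, when multiplied over several polynomials of differing degrees, yield a clean $q$-power saving of order $1/D'$. Relatedly, one must ensure that the singular-series/major-arc analysis is uniform over the product, so that the main term controlling $\E[F]$ genuinely behaves like $\prod M_i$ up to a constant depending on the $h_i$. Once these analytic-number-theoretic inputs are in place, the density increment and its iterations are essentially formal, with the case structure in the theorem arising purely from the size of $1/D'$ relative to $1$ and the available iteration scheme.
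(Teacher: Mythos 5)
Your overall architecture --- product of prime exponential sums, the saving $q^{-\sum k_i^{-1}}=q^{-1/D'}$ on small denominators, an $L^2$ density increment with auxiliary polynomials inherited through the $\P$-intersective $p$-adic roots, and the PSS-style double iteration for $1<D'\leq 2$ --- is exactly the paper's plan. But there is one concrete gap that would prevent you from reaching the stated exponents: you propose to run the major-arc asymptotics for $S_i$ via Siegel--Walfisz. Siegel--Walfisz only gives $\psi(x,a,q)\sim x/\phi(q)$ for $q\leq(\log x)^{B}$, so the moduli $d_i$ and the accumulated step sizes $\lambda(q_1)\cdots\lambda(q_m)$ in the increment iteration could only be taken up to a power of $\log N$; the iteration would then terminate after far fewer steps and yield bounds of roughly $(\log\log N)^{-1/(D'-1)}$ quality in the $D'>2$ regime, not $(\log N)^{-1/(2(D'-1))+\epsilon}$. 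The paper instead uses the Ruzsa--Sanders exceptional-zero dichotomy (Proposition 4.7 of \cite{Ruz}, restated as Lemma \ref{RS}) to get an asymptotic for $\psi(x,a,q)$ valid for all $q$ up to $e^{c\sqrt{\log N}}$ divisible by a fixed exceptional modulus $q_0$, at the cost of a secondary term $-\chi(a)x^{\rho}/\phi(q)\rho$; this forces an initial passage to a progression of step $\lambda(q_0)$, the hypothesis $q_0\mid d_i$ throughout the iteration, and the lower bound $\Psi\gg M/q_0^{\ell}$ on the main term. You invoke Ruzsa--Sanders only for the final case ($D'<1$ or $\ell=k_1=1$), but it is needed in every case of the iteration lemma --- it is precisely the source of the factor of $2$ in the exponent $1/2(D'-1)$ and of the $1/4$ in $e^{-c(\log N)^{1/4}}$.

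Two smaller points. First, the stretched-exponential bound $e^{-(\log N)^{\mu}}$ for $D'=1$, $\ell>1$ does not come from a Bohr-set refinement; in the paper it falls out of the same $L^2$ increment, where the divisor-type factor $C^{\omega(q)}$ in the restricted Gauss sum bounds (Lemma \ref{gsPI}) degrades the increment to $(1+c\log^{-C}(\delta^{-1}))\delta$ after averaging $\sum_{q\leq Q}C^{\omega(q)}\ll Q\log^{C}Q$. Second, to keep the $q^{-1/k_i}$ Gauss-sum saving uniform along the iteration you must control the content of the auxiliary polynomials $h_i^{d}$; this is Lucier's observation (Lemma \ref{content}) and is not automatic from the setup you describe.
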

 
\noindent \textit{Remark on constants in Theorems \ref{main1} and \ref{main2}.} By utilizing the precise statements of Lemmas \ref{gauss2}, \ref{gaussS}, and \ref{gsPI}, one can take the constant $c$ in Theorems \ref{main1} and \ref{main2} to be the reciprocal of the maximum over the collection of polynomials of the right hand side of the inequality in the conclusion of Lemma \ref{content}, times a constant depending only on $k_1,\dots,k_{\ell}$.  Further, in the first appearance of the constant $\mu$ in each theorem, one can take $\mu=1/2\log(\min\{k_i\})$ in Theorem \ref{main1}, as explicitly shown in Section \ref{hardproof}, and $\mu=1/4\log(\min\{k_i\})$ in Theorem \ref{main2}. In the second appearance of $\mu$ in each theorem, one can apply Chen's \cite{Chen} explicit bounds on the implied constants in Lemma \ref{gauss2} and take $\mu=\exp(-10\sum_{i=1}^{\ell}k_i)$. We also note that at the expense of the implied constants in Theorems \ref{main1} and \ref{main2}, we are free in all cases to assume that the main parameter $N$ is sufficiently large with respect to the fixed polynomials $h_1,\dots,h_{\ell}$, so we take this as a perpetual hypothesis and refrain from explicitly including it further. 
 
\subsection{Some special cases} \label{exp1} We first note that in the case of $\ell=1$, that is to say the previously treated cases of a single polynomial, Theorems \ref{main1} and \ref{main2} simply recover the previous best-known results, with the notable exception of ``sparse" polynomials in Theorem \ref{main1}. For example, if $h(x)=x+2x^{17}+x^{31}$, then we can take $D=3$ with $\ell_3>0$, so a set $A\subseteq [1,N]$ free of nonzero differences of the form $h(n)$ satisfies $$\frac{|A|}{N} \ll_{h,\epsilon} (\log N)^{-\frac{1}{4}+\epsilon}$$ for any $\epsilon>0$, whereas previously $1/4$ was replaced with $1/30$. Moreover, if $r=2$, in other words $h(x)=ax^j+bx^k$ for some natural numbers $j< k$ and nonzero $a,b \in \Z$, then (\ref{PSSbound}) holds with $c=1/2\log(k)$, a bound previously only known for monomials and quadratics.

\noindent In certain cases with $\ell=2$, Theorem \ref{main1} provides density bounds superior to any attained in the single polynomial case. For example, if a set $A\subseteq[1,N]$ lacks nonzero differences of the form $h_1(m)+h_2(n)$, where $h_1,h_2\in\Z[x]$ are arbitrary intersective quadratic polynomials, including for example sums of two squares, then we can take $D=1$ with $\ell_2>0$ and $\ell_3=0$, and therefore \begin{equation} \label{Bbound} \frac{|A|}{N} \ll_{h_1,h_2} e^{-c\sqrt{\log N}} \end{equation} for some $c=c(h_1,h_2)>0$. This leap in bound quality is not especially surprising in this particular case, as the collection of elements of the form $h_1(m)+h_2(n)$ inside of $[1,N]$ has size $N^{1-o(1)}$ (much like the $p-1$ case), and is hence far denser than the image of any single nonlinear polynomial. 
 
\noindent What is more notable, perhaps, is that we exhibit bounds of similar quality in certain cases where the collection of avoided differences is quite sparse. For example, if $A\subseteq[1,N]$ lacks differences of the form $m^j+n^k$ for fixed $j,k\in \N$, then $D=1$, $\ell_2>0$, $\ell_3=0$, and \begin{equation} \label{cbound} \frac{|A|}{N} \ll_{j,k} e^{-c(\log N)^{\mu}} \end{equation} for some $c=c(j,k),\mu=\mu(j,k)>0$, despite the fact that the collection of elements of the form $m^j+n^k$ in $[1,N]$ has size at most $N^{\frac{1}{j}+\frac{1}{k}}$. In fact, the same estimate holds for a set $A\subseteq [1,N]$ free of differences of the form $h_1(m)+h_2(n)$ where  $h_1(x)=a_1x^{j_1}+b_1x^{k_1}$ and $h_2(x)=a_2x^{j_2}+b_2x^{k_2}$ for nonzero $a_i,b_i \in \Z$ and natural numbers $j_i<k_i$, and $c=c(h_1,h_2)>0$. 

\noindent As discussed in Section \ref{exp2}, the application of the exponential sum estimates utilized to achieve this gain for sparse polynomials and monomials requires sieve estimates that impose a limitation on the density increment iteration. This limitation results in the factor of two loss with $\ell_2+\ell_3>0$ in the exponents of certain bounds in Theorem \ref{main1}. For example, if $\ell=1$ and $h(x)=x+x^3+x^4$, we are better off treating $h$ as an arbitrary intersective polynomial of degree $4$, resulting in a density bound of about $(\log N)^{-1/3}$, as opposed to a polynomial with three nonzero coefficients, yielding a bound of about $(\log N)^{-1/4}.$ This ambiguity in optimal partition where a larger $D$ can yield a better bound is reasonably rare, and only occurs when all possible values of $D$ are greater than $2$ or all possible values of $D$ are at most $1$. 

\noindent For some notable examples with $\ell=2$ in Theorem \ref{main2}, we see that if $A\subseteq [1,N]$ has no differences of the form $(p-1)^2+(q-1)^2$ with $p,q$ prime, then $D=1$ and (\ref{cbound}) holds with all constants absolute, whereas if $A$ lacks differences of the form $(p-1)^4+(q-1)^4$ with $p,q$ prime, then $D=2$ and (\ref{PSSbound}) holds with $c=1/4\log(4)$, and of course these powers can be replaced with any pairs of $\P$-intersective polynomials of degree $2$ or $4$, respectively.

\noindent We also note that the specially earmarked case of $\ell=k_1=1$ in Theorem \ref{main2} is simply the previously studied case of $h(p)=a(p\pm 1)$, and we include the necessary tools to recover this result for the sake of completeness.

\subsection{Motivation for $D$ and $D'$ from Gauss sum estimates} \label{exp2} In this context, the guiding principle of the Hardy-Littlewood circle method is that if $h\in \Z[x]$, then the \textit{Weyl sum} \begin{equation} \label{weyls} \sum_{n=1}^Me^{2\pi \textnormal{i}h(n)\alpha}\end{equation} is much smaller than the trivial bound $M$, unless $\alpha$ is well-approximated by a rational number with small denominator. 

\noindent On a coarse scale, this principle is captured by combining the pigeonhole principle with Weyl's Inequality (see Lemma \ref{weyl2}), but for a more refined treatment, we must address the following question: If $\alpha$ IS quite close to a rational with a quite small denominator, for example smaller than a tiny power of $M$, can we beat the trivial bound at all? 

\noindent This question turns out to be quite straightforward, as under these conditions (\ref{weyls}) has a convenient asymptotic formula, and the gain from the trivial bound resides in a local version of the sum, or \textit{Gauss sum}. Specifically, if $\alpha$ is close to $a/q$ with  $q$ small, then, up to a small error, the magnitude of (\ref{weyls}) is at most $M$ times  \begin{equation}\label{gsintro} q^{-1}\sum_{s=0}^{q-1}e^{2\pi \text{i}h(s)a/q}. \end{equation} 

\noindent Moreover, by sieving our initial set of inputs, letting $W$ equal a product of small primes and considering only inputs coprime to $W$, we can replace (\ref{weyls}) with \begin{equation} \label{weyls2} \sum_{\substack{n=1 \\ (n,W)=1}}^Me^{2\pi \textnormal{i}h(n)\alpha}, \end{equation} in which case the gain from the trivial bound for $\alpha$ near $a/q$ with small $q$ is given roughly by \begin{equation}\label{gsintro2} q^{-1}\sum_{\substack{s=0 \\ (s,q)=1}}^{q-1}e^{2\pi \text{i}h(s)a/q}. \end{equation} 

\noindent In applying the two previously developed Fourier analytic arguments, the resulting density bounds are determined by the power $\theta=\theta(h)$ such that the magnitude of the relevant Weyl sum for $\alpha$ near $a/q$ with $q$ small beats the trivial bound by a factor of $q^{-\theta}$. In particular, to run the more intricate method developed first in \cite{PSS}, $\theta$ must be at least $1/2$.  

\noindent Moreover, when considering sums of polynomials, the relevant sum splits, for example $$\sum_{n_1}^{M_1}\sum_{n_2}^{M_2}e^{2\pi \text{i} (h_1(n_1)+h_2(n_2))\alpha}=\Big(\sum_{n=1}^{M_1}e^{2\pi \textnormal{i}h_1(n)\alpha}\Big)\Big(\sum_{n=1}^{M_2}e^{2\pi \textnormal{i}h_2(n)\alpha}\Big),$$ so we can add together the corresponding powers $\theta(h_1)$ and $\theta(h_2)$. 

\noindent In the context of Theorem \ref{main1}, we are free, for a given polynomial $h\in \Z[x]$, to choose the better of (\ref{gsintro}) and (\ref{gsintro2}), with the caveat that when employing the more straightforward of the two methods, choosing (\ref{gsintro2}) imposes an increased limitation on the density increment iteration due to the need to accurately count integers with no small prime factors. 

\noindent Traditional estimates (see Lemma \ref{gauss2}) say that for any $h\in \Z[x]$, one can choose (\ref{gsintro}) and take $\theta(h)=1/\deg(h)$. As observed in \cite{BPPS} (see Lemma \ref{gaussB}), if $h\in\Z[x]$ is a nonconstant monomial then one can choose (\ref{gsintro2}) and take $\theta(h)=1/2$. Finally, if $h\in\Z[x]$ has $r\geq2$ nonzero coefficients, then by estimates of Shparlinski (see Lemma \ref{gaussS}), one can choose (\ref{gsintro2}) and take $\theta(h)=1/r$. 

\noindent Given a collection of intersective polynomials $h_1,\dots,h_{\ell}\in \Z[x]$,  we choose $\theta(h_i)$ for $1\leq i \leq \ell$ and for $\alpha$ near $a/q$ with $q$ small, we can beat the trivial bound on the chosen  $\ell$-fold Weyl sum by $q^{-\theta}$ where $\theta=\sum_{i=1}^{\ell} \theta(h_i)$. 

\noindent As previously mentioned, it is this quantity $\theta$ that is the primary determining factor in the eventual density bound, with ``breaking points" at $\theta=1/2$, where the more intricate argument kicks in, and $\theta=1$, where the more straightforward argument yields particularly good bounds. The quantity $D$ defined in Theorem \ref{main1} is simply $1/\theta$, where the reciprocal is taken for aesthetic purposes, and so that $D$ plays the role formerly played by the degree of a single polynomial.

\noindent In the prime input setting of Theorem \ref{main2} the aforementioned sieve technique does not yield improved Gauss sum estimates, so we must stick to traditional gains and set $\theta(h_i)=1/\deg(h_i)$ for $1\leq i \leq \ell$, which explains the more straightforwardly defined quantity $D'$ in that theorem.

\noindent \textit{Remark on the Generalized Riemann Hypothesis.} As previously remarked, the limitations of known sieve estimates, as well as our limited knowledge of the distribution of primes in arithmetic progressions, result in potentially avoidable losses in Theorems \ref{main1} and \ref{main2}, respectively. Specifically, if we assume the Generalized Riemann Hypothesis, then the factor of $2$ can be dropped from the exponents $1/2(D-1)$ and $1/2(D'-1)$, and both appearances of the exponent $1/4$ can be changed to $1/2$.
\subsection{Lower Bounds and Conjectures} Armed with a collection of $7$ elements of $\Z/65\Z$, no distinct pair of which differ by a mod $65$ square, Ruzsa \cite{Ruz2} was able to construct a set $A\subseteq [1,N]$ with no nonzero square differences satisfying $|A|\gg N^c$, where $c=(1+\ln7/\ln65)/2 \approx 0.7331$. Recently, Lewko \cite{Lewko} made the slight improvement to $c=(1+\ln12/\ln205)/2 \approx 0.7334$. 

\noindent The finite field analog of the square difference question suggests that $c=3/4$ may be a natural limitation to Ruzsa's construction, which could potentially be viewed as evidence toward $N^{3/4}$ as the true threshold for this problem, while many believe the threshold actually grows faster than $N^{1-\epsilon}$ for any $\epsilon>0$.

\noindent For the $p-1$ case, the gap between known upper and lower bounds is even more cavernous. Ruzsa \cite{Ruz3} constructed a set $A\subset [1,N]$ satisfying $|A|\gg N^{c/\log\log N}$ with no $p-1$ differences, but nothing better in this direction is known. Consequently, the full resolutions of even the two original questions, much less the various generalizations, are still massively open.

\noindent \textit{Remark on generality of Theorems \ref{main1} and \ref{main2}.} We note that the necessary intersective condition makes perfect sense in a multivariable setting, and analogous results should hold for every intersective integral polynomial in several variables, not just diagonal forms. Further, there do exist intersective diagonal forms not covered in these theorems. For example, if $p$ is a prime congruent to $1$ modulo $90090$ that is not the sum of two integer cubes (of which there are plenty), then, since $p$ is a sum of two cubes modulo $q$ for every $q\in \N$, $x^3+y^3-p$ is an intersective polynomial in two variables that cannot be expressed as the sum of two single-variable intersective polynomials.

\noindent \textbf{Acknowledgements and Funding:} The authors would like to thank Paul Pollack and Steve Gonek for their helpful comments and references. The first author was partially supported by Simons Foundation Collaboration Grant for Mathematicians 245792.
\section{Auxiliary Polynomials and Inheritance Propositions}\label{auxsec} At some point in the proofs of all cases of Theorems \ref{main1} and \ref{main2}, we apply a density increment strategy, and we need to keep track of the inherited lack of arithmetic structure at each step of the iteration. Specifically, if we start with a set free of differences of the form $h_1(n_1)+\cdots+h_{\ell}(n_{\ell})$ for polynomials $h_1,\dots, h_{\ell}$, it spawns denser sets free of differences that are the sum of elements in new polynomial images. The following definitions describe all of the polynomials that we could potentially encounter.  

\noindent \textit{Remark on notation.} In an effort to maintain a bearable aesthetic, we frequently utilize both subscripts and superscripts for indexing purposes. Through context and consistency, we hope to avoid any confusion in distinguishing between superscript indices and exponents. 
 
\subsection{Auxiliary Polynomials}\label{auxdef} Suppose $h_1,\dots,h_{\ell} \in \Z[x]$ is a collection of intersective polynomials. For each $1\leq i \leq \ell$ and each prime $p$, we fix $p$-adic integers $z^p_i$ with $h_i(z^p_i)=0$, requiring that $z^p_i \not\equiv 0 \text{ mod }p$ if $h_i$ is $\P$-intersective. If considering the unrestricted input case of Theorem \ref{main1} and $h_i(0)=0$, we take $z^p_i=0$ for all $p$, and similarly in the prime input case of Theorem \ref{main2}, we take $z^p_i=\pm 1$ for all $p$ in the event that $h_i(\pm 1)=0$. 

\noindent \textit{Remark.} The definitions which follow certainly depend on the choice of $p$-adic integer roots, but any choice subject to the aforementioned restrictions works equally well for our purposes, and we suppress the dependence on this choice in the coming notation. 

\noindent By reducing modulo prime powers and applying the Chinese Remainder Theorem, the choices of $z^p_i$  determine, for each natural number $d$, a unique integer $r^d_i \in (-d,0]$, which consequently satisfies $d \mid h_i(r^d_i)$, and in the case that $h_i$ is $\P$-intersective we have $(r^d_i,d)=1$. 

\noindent We define the function $\lambda_i$ on $\N$ by  letting $\lambda_i(p)=p^{m_i}$ for each prime $p$, where $m_i$ is the multiplicity of $z^p_i$ as a root of $h_i$, and then extending it to be completely multiplicative. Further, we define $$\lambda=\lambda_1\circ \cdots \circ \lambda_{\ell} \quad \text{and} \quad \tilde{\lambda}_i=\lambda_1\circ\cdots \circ\lambda_{i-1}\circ\lambda_{i+1}\circ \cdots \circ \lambda_{\ell}.$$ 
\noindent For each $d\in \N$ and $1\leq i\leq\ell$, we define the \textit{auxiliary polynomial} $h^d_i$ by 
\begin{equation*} h^d_i(x)=h^d_i(r^d_i + dx)/\lambda_i(d). \end{equation*}

\noindent If $p^j \mid d$ for $p$ prime and $j \in \N$, then since $r^d_i \equiv z^p_i$ mod $p^j$, we see by factoring $h_i$ over $\Z_p$ that all the coefficients of $h_i(r^d_i+dx)$ are divisible by $p^{jm_i}$, hence each auxiliary polynomial has integer coefficients.  We also note that if $h_i(0)=0$, then the number of nonzero coefficients of $h_i^d$ is the same for all $d$.

\subsection{Inheritance Propositions} It is important to note that the leading coefficients of the auxiliary polynomials grow at least as quickly, up to a constant depending only on $h_i$, as the other coefficients. In particular, if $b^d_i$ is the leading coefficient of $h^d_i$, then for any $x>0$ we have that if $b^d_i>0$, then
\begin{equation} \label{symBIG}
\Big|\Big\{n\in \N: 0<h^d_i(n)<x\Big\} \ \triangle \ [1,(x/b^d_i)^{1/k}]\Big| \ll_{h_i} 1,
\end{equation} where $\triangle$ denotes the symmetric difference and the analogous observation holds if $b^d_i<0$. 

\noindent We define these auxiliary polynomials to keep track of the inherited lack of arithmetic structure at each step of a density increment iteration. For the unrestricted input setting in Theorem \ref{main1}, we define $$I(h)=\begin{cases} \{h(n)>0 : n\in \N\} & \text{ if }h \text{ has positive leading coefficient} \\\\ \{h(n)<0 : n\in \N\} & \text{ if }h \text{ has negative leading coefficient}  \end{cases}$$ for a nonzero polynomial $h\in \Z[x]$. 

\noindent For the prime input setting in Theorem \ref{main2}, given a collection of intersective polynomials $h_1,\dots,h_{\ell}\in \Z[x]$, we let $$\Lambda^d_{i}  =\{x \in \N: r^i_d+dx \text{ is prime}\}$$ 

\noindent for each $d\in \N$, and for a nonzero polynomial $h\in \Z[x]$ we define $$\V^d_i(h)=\begin{cases} \{h(n)>0 : n\in \Lambda^d_i\} & \text{ if }h \text{ has positive leading coefficient} \\\\ \{h(n)<0 : n\in \Lambda^d_i\} & \text{ if }h \text{ has negative leading coefficient}  \end{cases}.$$ Note that the polynomials do not need to be $\P$-intersective for these latter definitions to make sense, but if they are not then some of the sets $\Lambda^d_{i}$ are nearly if not completely empty. 

\noindent For any sets $A,B\subseteq \Z$, we use the standard notation $A\pm B=\{a \pm b : a\in A, \ b\in B\}$ for the sum and difference sets, respectively. The following two propositions make precise the aforementioned inherited lack of structure in each case.

\begin{proposition} \label{inh} If $h_1,\dots,h_{\ell}\in \Z[x]$ is a collection of intersective polynomials, $A \subseteq \N$, $$(A-A)\cap \Big(I(h^{d_1}_1)+\cdots+I(h^{d_{\ell}}_{\ell})\Big)\subseteq \{0\},$$ and $A'\subseteq \{\ell\in \N : x+\lambda(q)\ell \in A\}$, then $$(A'-A')\cap \Big(I(h^{\tilde{\lambda}_1(q)d_1}_1)+\cdots+ I(h^{\tilde{\lambda}_{\ell}(q)d_{\ell}}_{\ell})\Big)\subseteq \{0\}.$$
\end{proposition}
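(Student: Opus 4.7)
The plan is to lift the hypothetical representation from $A'$ back to $A$ via the scaling $x \mapsto x + \lambda(q)x$, obtain a representation of the lifted difference as an element of $I(h_1^{d_1}) + \cdots + I(h_\ell^{d_\ell})$, and invoke the assumption on $A$ to force the difference to be zero.

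Concretely, suppose $a',a'' \in A'$ are distinct and, toward a contradiction, that $a'-a'' = n_1+\cdots+n_\ell$ with $n_i \in I(h_i^{\tilde\lambda_i(q)d_i})$; say $n_i = h_i^{\tilde\lambda_i(q)d_i}(m_i)$ for some $m_i \in \N$. Since $x+\lambda(q)a', x+\lambda(q)a'' \in A$, the difference $\lambda(q)(a'-a'')$ lies in $A-A$. The heart of the proof is the identity
\begin{equation*}
\lambda(q)\, h_i^{\tilde\lambda_i(q)d_i}(m_i) \;=\; h_i^{d_i}(y_i)
\end{equation*}
for a suitable $y_i \in \N$. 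Unpacking the definition $h_i^d(x) = h_i(r_i^d + dx)/\lambda_i(d)$ and using the fact that $\lambda_i$ is completely multiplicative and $\lambda_i(\tilde\lambda_i(q)) = \lambda(q)$ (which follows from writing each $\lambda_j(p)$ as a prime power and composing), the $\lambda(q)$ prefactor cancels the $\lambda_i(\tilde\lambda_i(q))$ in the denominator, leaving $h_i(r_i^{\tilde\lambda_i(q)d_i} + \tilde\lambda_i(q)d_i m_i)/\lambda_i(d_i)$. So I need to find $y_i$ with $r_i^{d_i} + d_i y_i = r_i^{\tilde\lambda_i(q)d_i} + \tilde\lambda_i(q)d_i m_i$, which rearranges to $y_i = (r_i^{\tilde\lambda_i(q)d_i} - r_i^{d_i})/d_i + \tilde\lambda_i(q)m_i$.

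The step I expect to require the most care is confirming that $y_i$ is a \emph{positive} integer. Integrality follows from the fact that, by construction via CRT from the fixed $p$-adic roots $z_i^p$, one has $r_i^{D'} \equiv r_i^D \pmod{D}$ whenever $D \mid D'$; applied with $D=d_i$ and $D' = \tilde\lambda_i(q)d_i$, this forces $d_i \mid (r_i^{\tilde\lambda_i(q)d_i} - r_i^{d_i})$. Positivity uses the normalization $r_i^d \in (-d,0]$: this places $(r_i^{\tilde\lambda_i(q)d_i} - r_i^{d_i})/d_i$ in the interval $(-\tilde\lambda_i(q), 1]$, so that adding $\tilde\lambda_i(q)m_i$ with $m_i \geq 1$ produces $y_i \geq 1$. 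The sign of $h_i^{d_i}(y_i)$ agrees with that of $h_i^{\tilde\lambda_i(q)d_i}(m_i)$ (both are dictated by the leading coefficient of $h_i$), so $\lambda(q)n_i \in I(h_i^{d_i})$.

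Summing over $i$ gives
\begin{equation*}
\lambda(q)(a'-a'') \;\in\; (A-A) \cap \Bigl(I(h_1^{d_1}) + \cdots + I(h_\ell^{d_\ell})\Bigr) \subseteq \{0\},
\end{equation*}
whence $a'=a''$, contradicting our choice. The negative leading coefficient cases are identical up to signs, and the case of a zero term in the sum is trivial. The key algebraic ingredients are thus (i) the compositional identity $\lambda_i \circ \tilde\lambda_i = \lambda$, (ii) the divisibility $d_i \mid (r_i^{\tilde\lambda_i(q)d_i} - r_i^{d_i})$, and (iii) the sign/positivity bookkeeping coming from $r_i^d \in (-d,0]$.
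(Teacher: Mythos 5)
Your proposal is correct and follows essentially the same route as the paper: rewrite $\lambda(q)\,h_i^{\tilde\lambda_i(q)d_i}(m_i)$ as $h_i^{d_i}(s_i+\tilde\lambda_i(q)m_i)$ using $\lambda_i\circ\tilde\lambda_i=\lambda$ and $r_i^{\tilde\lambda_i(q)d_i}\equiv r_i^{d_i}\pmod{d_i}$, then push the scaled difference back into $A-A$. You in fact supply slightly more detail than the paper does (the positivity of the shifted argument and the sign bookkeeping), which the paper leaves implicit.
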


\begin{proof} Suppose that $A\subseteq \N$, $A'\subseteq \{\ell\in \N : x+\lambda(q)\ell \in A\}$, and 
\begin{align*}0\neq a-a'&=\sum_{i=1}^{\ell}h^{\tilde{\lambda}_i(q)d_i}_i(n_i)=\sum_{i=1}^{\ell}\frac{h_i\Big(r^{\tilde{\lambda}_i(q)d_i}_i+\tilde{\lambda}_i(q)d_in_i\Big)}{\lambda_i\Big(\tilde{\lambda}_i(q)d_i\Big)}=\sum_{i=1}^{\ell}\frac{h_i\Big(r^{\tilde{\lambda}_i(q)d_i}_i+\tilde{\lambda}_i(q)d_in_i\Big)}{\lambda(q)\lambda_i(d_i)}\end{align*} 
for some $n_1,\dots, n_{\ell}\in \N$, $a,a' \in A'$, with all polynomial terms having the same sign as the corresponding leading coefficient. By construction we know that $r^{\tilde{\lambda}_i(q)d_i}_i\equiv r^{d_i}_i$ mod $d_i$, so there exists $s_i\in \Z$ such that $r^{\tilde{\lambda}_i(q)d_i}_i=r^{d_i}_i+d_is_i$, and therefore \begin{align*}0\neq \sum_{i=1}^{\ell} h^{d_i}_i(s_i+\tilde{\lambda}_i(q)n_i)=\sum_{i=1}^{\ell}\frac{h_i(r_i^{d_i}+d_i(s_i+\tilde{\lambda}_i(q)n_i))}{\lambda_i(d_i)}=\lambda(q)(a-a').\end{align*} Because $A'\subseteq \{\ell\in \N : x+\lambda(q)\ell \in A\}$, we know that $\lambda(q)(a-a')\in A-A$, hence  $$(A-A)\cap \Big(I(h^{d_1}_1)+\cdots+I(h^{d_{\ell}}_{\ell})\Big) \not\subseteq \{0\},$$ and the contrapositive is established.
\end{proof}

\noindent We utilize the following analog of Proposition \ref{inh} in the proof of Theorem \ref{main2}.

\begin{proposition} \label{inh2} If $h_1,\dots,h_{\ell}\in \Z[x]$ is a collection of intersective polynomials, $A \subseteq \N$, $$(A-A)\cap \Big(\V_1^{d_1}(h^{d_1}_1)+\cdots+\V^{d_{\ell}}_{\ell}(h^{d_{\ell}}_{\ell})\Big)\subseteq \{0\},$$ and $A'\subseteq \{\ell\in \N : x+\lambda(q)\ell \in A\}$, then $$(A'-A')\cap \Big(\V^{\tilde{\lambda}_1(q)d_1}_1(h^{\tilde{\lambda}_1(q)d_1}_1)+\cdots+ \V^{\tilde{\lambda}_{\ell}(q)d_{\ell}}_{\ell}(h^{\tilde{\lambda}_{\ell}(q)d_{\ell}}_{\ell})\Big)\subseteq \{0\}.$$

\end{proposition}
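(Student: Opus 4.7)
The plan is to mimic the proof of Proposition \ref{inh} almost verbatim, establishing the contrapositive, with one additional verification needed at the end to ensure that the primality condition on the inputs is preserved under the relevant substitution. So, I would begin by assuming $A'\subseteq\{\ell\in\N:x+\lambda(q)\ell\in A\}$ admits a nonzero difference
\[
a-a'=\sum_{i=1}^{\ell}h_i^{\tilde{\lambda}_i(q)d_i}(n_i),
\]
where each term $h_i^{\tilde{\lambda}_i(q)d_i}(n_i)$ lies in $\V_i^{\tilde{\lambda}_i(q)d_i}(h_i^{\tilde{\lambda}_i(q)d_i})$. By the definition of $\V$, this means each $n_i\in\Lambda_i^{\tilde{\lambda}_i(q)d_i}$, i.e., $r_i^{\tilde{\lambda}_i(q)d_i}+\tilde{\lambda}_i(q)d_in_i$ is prime, and each polynomial term carries the same sign as the leading coefficient of $h_i^{\tilde{\lambda}_i(q)d_i}$.

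Next, exactly as in Proposition \ref{inh}, I would invoke the congruence $r_i^{\tilde{\lambda}_i(q)d_i}\equiv r_i^{d_i}\pmod{d_i}$ coming from the Chinese Remainder Theorem construction of the integer roots to write $r_i^{\tilde{\lambda}_i(q)d_i}=r_i^{d_i}+d_is_i$ for some $s_i\in\Z$. Substituting and using the definition of the auxiliary polynomials, the identity above becomes
\[
\sum_{i=1}^{\ell}h_i^{d_i}\bigl(s_i+\tilde{\lambda}_i(q)n_i\bigr)=\lambda(q)(a-a'),
\]
which by hypothesis on $A'$ lies in $A-A$ and is nonzero.

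The new ingredient is to check that each $s_i+\tilde{\lambda}_i(q)n_i$ lies in $\Lambda_i^{d_i}$, so that the resulting terms genuinely sit in the corresponding sets $\V_i^{d_i}(h_i^{d_i})$. But this is immediate from the key algebraic identity
\[
r_i^{d_i}+d_i\bigl(s_i+\tilde{\lambda}_i(q)n_i\bigr)=r_i^{\tilde{\lambda}_i(q)d_i}+\tilde{\lambda}_i(q)d_in_i,
\]
whose right-hand side is prime by assumption. One also notes that the signs of the terms $h_i^{d_i}(s_i+\tilde{\lambda}_i(q)n_i)$ still match their leading coefficients, since these values agree up to the positive factor $\lambda(q)$ with the original terms $h_i^{\tilde{\lambda}_i(q)d_i}(n_i)$. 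Thus $A-A$ meets $\V_1^{d_1}(h_1^{d_1})+\cdots+\V_{\ell}^{d_{\ell}}(h_{\ell}^{d_{\ell}})$ nontrivially, contradicting the hypothesis and completing the contrapositive.

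There is essentially no obstacle beyond bookkeeping: the substance of the argument is the same transfer-of-witnesses construction used in Proposition \ref{inh}, and the extra primality constraint survives the change of variables for free because the underlying linear form $r_i^d+dx$ is preserved.
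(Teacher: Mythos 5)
Your proposal is correct and matches the paper's proof, which likewise reduces to Proposition \ref{inh} plus the single observation that if $n\in \Lambda^{\tilde{\lambda}_i(q)d_i}_i$ and $r^{\tilde{\lambda}_i(q)d_i}_i=r^{d_i}_i+d_is_i$, then $s_i+\tilde{\lambda}_i(q)n \in \Lambda^{d_i}_i$. Your identity $r_i^{d_i}+d_i\bigl(s_i+\tilde{\lambda}_i(q)n_i\bigr)=r_i^{\tilde{\lambda}_i(q)d_i}+\tilde{\lambda}_i(q)d_in_i$ is exactly the verification needed.
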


\begin{proof}The proof is identical to that of Proposition \ref{inh}, with the added observation that if $n\in \Lambda^{\tilde{\lambda}_i(q)d}_i$ and $r^{\tilde{\lambda}_i(q)d}_i=r^{d}_i+ds$, then $s+\tilde{\lambda}_i(q)n \in \Lambda^d_i$.
\end{proof}

\section{Preliminaries  for S\'ark\"ozy's Method} \label{prelim}

In sections \ref{unr} and \ref{primesec}, we apply adapted, streamlined versions of S\'ark\"ozy's \cite{Sark1, Sark3} original $L^2$ density increment method, more closely modeled after \cite{LM}, \cite{Lucier}, and \cite{Rice}. Here we set the stage with some requisite tools and notation.

\subsection{Fourier analysis on $\Z$} We embed our finite sets in $\Z$, on which we utilize the discrete Fourier transform. Specifically, for a function $F: \Z \to \C$ with finite support, we define $\widehat{F}: \T \to \C$, where $\T$ denotes the circle  parameterized by the interval $[0,1]$ with $0$ and $1$ identified, by \begin{equation*} \widehat{F}(\alpha) = \sum_{x \in \Z} F(x)e^{-2 \pi \text{i}x\alpha}. \end{equation*}

\noindent Given $N\in \N$ and a set $A\subseteq [1,N]$ with $|A|=\delta N$, we examine the Fourier analytic behavior of $A$ by considering the \textit{balanced function}, $f_A$, defined by
\begin{equation*} f_A=1_A-\delta 1_{[1,N]}.\end{equation*}

\subsection{The circle method} We analyze the behavior of $\widehat{f_A}$ using the Hardy-Littlewood circle method, decomposing the frequency space into two pieces: the points on the circle that are close to rationals with small denominator, and those that are not.

\begin{definition}Given $\gamma>0$ and $Q\geq 1$, we define, for each $q\in \N$ and $a\in [1,q]$,
$$\mathbf{M}_{a/q}(\gamma)=\left\{ \alpha \in \T : \Big|\alpha-\frac{a}{q}\Big| < \gamma \right\},$$  $$\mathbf{M}_q(\gamma)=\bigcup_{(a,q)=1} \mathbf{M}_{a/q}(\gamma),$$ and $$ \mathbf{M}'_q(\gamma)=\bigcup_{r\mid q} \mathbf{M}_r(\gamma)=\bigcup_{a=1}^q \mathbf{M}_{a/q}(\gamma).$$
We then define $\mathfrak{M}(\gamma,Q)$, the \textit{major arcs}, by
\begin{equation*} \mathfrak{M}(\gamma,Q)=\bigcup_{q=1}^{Q} \mathbf{M}_q(\gamma),
\end{equation*} 
and $\mathfrak{m}(\gamma,Q)$, the \textit{minor arcs}, by  
\begin{equation*} \mathfrak{m}(\gamma,Q)=\T\setminus \mathfrak{M}(\gamma,Q).
\end{equation*} 
We note that if $2\gamma Q^2<1$, then \begin{equation} \label{majdisj}\mathbf{M}_{a/q}(\gamma)\cap\mathbf{M}_{b/r}(\gamma)=\emptyset \end{equation}whenever  $a/q\neq b/r$ and  $q,r \leq Q$. 
\end{definition}  

\subsection{Preliminary notation} Before delving into the details of the arguments for Theorems \ref{main1} and \ref{main2}, we take the opportunity to define some relevant sets and quantities, depending on polynomials $h_1,\dots,h_{\ell}\in \Z[x]$, a partition $\ell=\ell_1+\ell_2+\ell_3$, scaling parameters $d_1,\dots,d_{\ell}$, a parameter $\eta>0$, and the size of the ambient interval $N$, that will be used in both cases. In all the notation defined below, we suppress all of the aforementioned dependence, as the relevant objects will be fixed in context. 

\noindent To this end, given intersective polynomials $h_1,\dots,h_{\ell}\in \Z[x]$, we let $k_i=\deg(h_i)$, $\textbf{k}=(k_1,\dots,k_{\ell})$, $k=\prod_{i=1}^{\ell} k_i$, $K=2^{10k}$, and $$D'=\Big(\sum_{i=1}^{\ell} k_i^{-1}\Big)^{-1}.$$ For ease of notation when specifying the dependence of constants on all of these polynomials, we use $\textnormal{\textbf{h}}$ to denote $(h_1,\dots,h_{\ell})$. Further, when working in $[1,N]$ with scaling parameters $d_1,\dots,d_{\ell}\in \N$, we define the following for $1\leq i \leq \ell$:
$$M_i=\lfloor (N/9\ell|b_{i}|)^{1/k_i}\rfloor,$$ where $b_i$ is the leading coefficient of $h_i^{d_i}$, and $$H_{i}=\begin{cases}\{n \in \N : 0<h^{d_i}_i(n)<N/9\ell\} & \text{if } b_i>0 \\\{n \in \N : -N/9\ell<h^{d_i}_i(n)<0\} & \text{if } b_i<0\end{cases},$$ noting that by (\ref{symBIG}) we have \begin{equation}\label{symdif} |H_{i} \ \triangle  \ [1,M_i]| \ll_{h_i} 1.  \end{equation}
For $\n\in \N^{\ell}$, we let $h(\n)=h^{d_1}_1(n_1)+\cdots+h^{d_{\ell}}_{\ell}(n_\ell)$, and we let $Z=\{\n \in \N^{\ell} : n_i\in H_i, \  h(\n)=0\}$.

\noindent In the context of Theorem \ref{main1} in Section \ref{unr}, given nonnegative integers $\ell_1,\ell_2,\ell_3$ with $\ell_1+\ell_2+\ell_3=\ell$, we fix a real number $\epsilon$ by setting $\epsilon=0$ if $\ell_2=\ell_3=0$ and letting $\epsilon>0$ be an arbitrary positive number if $\ell_2+\ell_3>0$. If navigating the argument with a particular collection of polynomials in mind and $\ell_2=\ell_3=0$, one can replace $\epsilon$ with $0$ throughout and any dependence of constants on this parameter should be ignored. 

\noindent Also in Section \ref{unr}, we employ a trick of initially sieving our input values in order to improve exponential sum estimates. To this end, given $\eta>0$, we let $$\displaystyle{W=\prod_{p\leq \eta^{-(D'+\epsilon)}}p},$$ where the product is taken over primes. For $1\leq i \leq \ell$ we let $\tilde{H}_i=\{n\in H_i : (n,W)=1\}$ and $$\displaystyle{\tilde{M}_i=M_i\prod_{p\leq \eta^{-(D'+\epsilon)}} \Big(1-\frac{1}{p}\Big)}.$$ Further, we define $\tilde{H}=\Big(H_1\times \cdots \times H_{\ell_1}\times \tilde{H}_{\ell_1+1} \cdots \times \tilde{H}_{\ell}\Big) \setminus Z$   and $$\tilde{M}=\prod_{i=1}^{\ell_1}M_i\prod_{j=\ell_1+1}^{\ell} \tilde{M_j}.$$ By (\ref{symdif}) we see that $|Z|\ll_{\textbf{k}} \prod_{i=1}^{\ell-1}M_i$, and further noting the standard estimate 
\begin{equation}\label{loglog} \prod_{p\leq X}\Big( 1-\frac{1}{p}\Big) \gg (\log X)^{-1},\end{equation} we see that in fact \begin{equation}\label{Hbig}  |\tilde{H}| \geq \tilde{M}/2, \end{equation} provided, for example, that $\eta^{-(D'+\epsilon)}<N^{1/10}$.

\noindent In the context of Theorem \ref{main2} in Section \ref{primesec}, the aforementioned sieving does not yield the desired gains, so we make the more straightforward definitions  $H=\Big(H_1\times \cdots \times H_{\ell}\Big) \setminus Z$ and $M=\prod_{i=1}^{\ell}M_i,$ analogously noting that \begin{equation} \label{Hbig2} |H|\geq M/2. \end{equation}

\section{S\'ark\"ozy's Method: Theorem \ref{main1} for $D>2$ and $D\leq 1$} \label{unr}
For the remainder of this section, we fix intersective polynomials $h_1,\dots, h_{\ell} \in \Z[x]$, partitioned into groups of size $\ell_1, \ell_2,\ell_3\geq 0$ as indicated in Theorem \ref{main1}. Namely, $h_1,\dots,h_{\ell_1}$ are arbitrary nonzero intersective polynomials, $h_{\ell_1+1},\dots, h_{\ell_1+\ell_2}$ are nonconstant monomials, and $h_{\ell_1+\ell_2+1}, \dots, h_{\ell}$ are nonmonomials with no constant term. Throughout the argument, when working in $[1,N]$, we let $c_1=(10kK)^{-1}$ and define $$\cQ=\cQ(N)=\begin{cases}N^{c_1} &\text{if } \ell_2=\ell_3=0 \\ e^{c_1\sqrt{\log N}} &\text{if } \ell_2+\ell_3>0 \end{cases}.$$

\noindent We deduce Theorem \ref{main1} (outside of $1<D\leq 2$) from the following iteration lemma, which states that a set  deficient in the desired arithmetic structure spawns a new, significantly denser subset of a slightly smaller interval with an inherited deficiency in the structure associated to appropriate auxiliary polynomials. 

\begin{lemma} \label{mainit} Suppose $A\subseteq [1,N]$ with $|A|=\delta N$. If $(A-A)\cap \Big(I(h^{d_1}_1)+\cdots+ I(h^{d_{\ell}}_{\ell})\Big)\subseteq \{0\}$ and $d_1,\dots,d_{\ell},\delta^{-1}\leq \cQ$, then there exist $q\ll_{\textnormal{\textbf{h}},\epsilon} \delta^{-(D+\epsilon)}$ and $A'\subseteq [1,N']$ with 
$N'\gg_{\textnormal{\textbf{h}},\epsilon}  \delta^{(D'+\epsilon)(k+1)}N$, 

\begin{equation*}\frac{|A'|}{N'} \geq \begin{cases} \delta+c\delta^{D+2\epsilon}& \text{if } D>1  \\ (1+c\log^{-C}(\delta^{-1}))\delta  & \text{if } D=1, \ell_2+\ell_3>0 \\ (1+c)\delta & \text{if } D= 1, \ell_2+\ell_3=0 \text{ or } D<1\end{cases} , \end{equation*} and $$(A'-A')\cap \Big(I(h^{\tilde{\lambda}_1(q)d_1}_1)+\cdots+ I(h^{\tilde{\lambda}_{\ell}(q)d_{\ell}}_{\ell})\Big)\subseteq \{0\}, $$  for some $c=c(\textnormal{\textbf{h}},\epsilon)>0$ and $C=C(\textnormal{\textbf{k}})$. 
\end{lemma}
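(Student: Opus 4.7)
The argument is an $L^2$ Fourier density increment in the spirit of S\'ark\"ozy, Lucier, and the authors' prior work, the new feature being that the relevant exponential sum
$$S(\alpha)\;=\;\prod_{i=1}^{\ell_1}\sum_{n\in H_i} e^{-2\pi\i h_i^{d_i}(n)\alpha}\;\cdot\;\prod_{i=\ell_1+1}^{\ell}\sum_{n\in \tilde H_i} e^{-2\pi\i h_i^{d_i}(n)\alpha}$$
factorises across the $\ell$ polynomials.  This lets us combine the three Gauss sum bounds available --- classical Weyl-type for the general intersective factors ($i\leq\ell_1$), the Balog--Pelikan--Pintz--Szemer\'edi bound for the monomials ($\ell_1<i\leq\ell_1+\ell_2$), and Shparlinski's bound for the sparse factors ($i>\ell_1+\ell_2$) --- factorwise into a product saving of $q^{-1/D+\epsilon}$ on each major arc centred at $a/q$, which is the mechanism behind the exponent $D$ in the conclusion.

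To set up, let $R(m)=|\{\n\in\tilde H:h(\n)=m\}|$, so that $\widehat R=S$, and consider the count $T=\sum_{a,a'\in A}R(a-a')$.  Because $R(0)=0$ (as $\tilde H$ excludes $Z$) and the hypothesis rules out nonzero values of $h(\n)$ lying in $A-A$, we have $T=0$.  Parseval, on the other hand, gives $T=\int_0^1|\widehat{1_A}(\alpha)|^2 S(\alpha)\,d\alpha$.  Substituting $1_A=f_A+\delta 1_{[1,N]}$ and using (\ref{Hbig}) together with the bound $|h(\n)|\leq N/2$ on the support of $\tilde H$ isolates a main term of size $\gg\delta^2 N\tilde M$, while the cross terms $2\delta\,\mathrm{Re}\int\widehat{f_A}\overline{\widehat{1_{[1,N]}}}\,S\,d\alpha$ reduce to telescoping tail sums in $f_A$ that are smaller, thanks to $\widehat{f_A}(0)=0$.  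Rearranging,
$$\left|\int_0^1|\widehat{f_A}(\alpha)|^2\, S(\alpha)\,d\alpha\right|\;\gg\;\delta^2 N\tilde M.$$

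Now decompose over major and minor arcs $\mathfrak M(\gamma,Q)$ and $\mathfrak m(\gamma,Q)$ with $Q\asymp\delta^{-(D+\epsilon)}$ and $\gamma\asymp Q/\tilde M$.  On the minor arcs, the product of factorwise Weyl bounds forces $\|S\|_{L^\infty(\mathfrak m)}\ll \tilde M\cdot Q^{-c_0}$ for some $c_0>0$ (this is where the hypothesis $\delta^{-1}\leq\cQ$ enters, to keep $Q$ below the Weyl threshold for each factor), so Plancherel gives a minor arc integral of size $o(\delta^2 N\tilde M)$, and the full lower bound survives on $\mathfrak M$.  On each arc $\mathbf M_{a/q}(\gamma)$, the standard asymptotic factorisation of $S$ around $a/q$ together with Lemmas \ref{gauss2}, \ref{gaussB}, and \ref{gaussS} applied factorwise yields $|S(\alpha)|\ll \tilde M\cdot q^{-1/D+\epsilon}$, and an $L^2$ pigeonhole then produces some $q\leq Q$ and some $a$ with $(a,q)=1$ satisfying
$$\sup_{\alpha\in\mathbf M_{a/q}(\gamma)}|\widehat{f_A}(\alpha)|^2\;\gg\;\delta^2 N^2\cdot g(\delta,q),$$
where the increment factor $g$ matches the three cases of the conclusion (polynomial gain when $D>1$, a $\log^{-C}$ factor in the boundary case $D=1$ with $\ell_2+\ell_3>0$, and constant gain otherwise).

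Converting the large Fourier coefficient at $a/q$ into a density increment is routine: one partitions $[1,N]$ into arithmetic progressions of common difference $\lambda(q)$ and length $N'\gg \delta^{(D'+\epsilon)(k+1)}N$, with the length absorbing $\lambda(q)\leq q^k\ll\delta^{-(D'+\epsilon)k}$ and the major arc width $\gamma^{-1}$, and at least one progression receives the claimed density increment.  The common difference is chosen to be exactly $\lambda(q)$ so that Proposition \ref{inh} transfers the avoidance hypothesis from $A$ to $A'$ with updated scaling parameters $\tilde\lambda_i(q)d_i$.  I expect the main technical difficulty to be twofold: (i) fusing the three Gauss sum bounds across mixed collections of polynomials so that sieved and unsieved factors multiply to a clean $q^{-1/D+\epsilon}$ saving while preserving the sieve-theoretic counts that calibrate the iteration; and (ii) the borderline case $D=1$ with $\ell_2+\ell_3>0$, where the $q^{-1}$ gain is barely not summable and a dyadic decomposition in $q$ is needed to extract the logarithmic increment $\log^{-C}(\delta^{-1})$ in place of a polynomial-in-$\delta$ one.
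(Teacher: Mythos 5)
Your overall architecture---the counting operator over $\tilde{H}$, the major/minor arc split with the factorised Gauss sum saving $q^{-1/D+\epsilon}$, and the final passage to a progression of common difference $\lambda(q)$ via Proposition \ref{inh}---is the paper's. The genuine gap is at the pigeonholing step. From the major arc information $\sum_{q\leq Q} b(q)\int_{\mathbf{M}_q(\gamma)}|\widehat{f_A}|^2\,d\alpha \gg \delta^2 N$ with $b(q)\approx q^{-1/D}$, you claim to extract a single arc $\mathbf{M}_{a/q}(\gamma)$ on which $\sup|\widehat{f_A}|^2\gg \delta^2N^2 g(\delta,q)$ with $g\asymp\delta^{2(D-1+2\epsilon)}$, i.e.\ a single Fourier coefficient of size $\delta^{D+2\epsilon}N$. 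This is not available: the $L^2$ mass one can place on $\mathbf{M}'_q(\gamma)$ is only $\asymp\delta^{D+1+2\epsilon}N$, spread over a set of measure up to $2\gamma q\asymp \delta^{-(D'+\epsilon)}q/N$, so the pointwise bound one can actually guarantee is $|\widehat{f_A}|^2\gg\delta^{2D+D'+1+O(\epsilon)}N^2$, short of the required $\delta^{2D}N^2$ by a factor of $\delta^{D'+1}$; and a naive pigeonhole over the $\asymp Q^2$ individual arcs loses an additional factor of the number of residues $a$. Either way the resulting increment is of the shape $\delta(1+c\delta^{D-1+(D'+1)/2})$ or worse, which does not prove the stated $\delta+c\delta^{D+2\epsilon}$ and would degrade the exponent in Theorem \ref{main1}. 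The paper avoids this by never isolating a single coefficient: Proposition \ref{rstrick} is a weighted pigeonhole over denominators only, exploiting $b(qr)\geq b(r)/q$ and the nesting $\mathbf{M}_r(\gamma)\subseteq\mathbf{M}'_q(\gamma)$ for $r\mid q$, together with the divisor-sum estimate (\ref{bqavg}); it hands the entire mass $\int_{\mathbf{M}'_q(\gamma)}|\widehat{f_A}|^2\geq\theta\delta^2N$ with $\theta\gg\delta^{D-1+2\epsilon}$ to the $L^2$ density increment Lemma \ref{dinc}, which converts it without loss into the increment $\delta(1+\theta/32)$. You need this device, or an equivalent one, to reach the claimed exponents; your ``main technical difficulty (ii)'' (dyadic decomposition in $q$ when $D=1$) is a shadow of it, but the mechanism is needed for all $D$, not only the boundary case.

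A secondary but real error: the assertion that the cross terms are small ``thanks to $\widehat{f_A}(0)=0$'' fails when $A$ concentrates near the ends of $[1,N]$. Indeed $\sum_x f_A(x)1_{[1,N]}(x+h(\n))=-\sum_{x\in E}f_A(x)$ for an edge interval $E$ of length $|h(\n)|\leq N/9\ell$, which can be of size $\asymp\delta N$ with the unfavourable sign, making the cross terms comparable to the main term. This is why Lemma \ref{L2I} carries the hypothesis $|A\cap(N/9,8N/9)|\geq 3\delta N/4$ and why the proof of Lemma \ref{mainit} treats the complementary case separately via the trivial density increment on $[1,N/9]$ or $[8N/9,N]$; your write-up needs the same case distinction.
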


\subsection*{Proof that Lemma \ref{mainit} implies Theorem \ref{main1} for $D>2$ and $D \leq 1$} Throughout this proof, we let $C$ and $c$ denote sufficiently large or small positive constants, respectively, which we allow to change from line to line, but can depend only on $\textbf{h}$ and $\epsilon$. We use $C'$ and $c'$ similarly, but these constants can depend only on $\textbf{k}$. Suppose $A \subseteq [1,N]$ with $|A|=\delta N$ and $$(A-A)\cap \Big(I(h_1)+\cdots+I(h_{\ell})\Big)\subseteq \{0\}.$$ Setting $A_0=A$, $N_0=N$, $d^0_1,\dots,d^0_{\ell}=1$, and $\delta_0=\delta$, Lemma \ref{mainit} yields, for each $m$, a set $A_m \subseteq [1,N_m]$ with $|A_m|=\delta_mN_m$ and $$(A-A)\cap \Big(I(h^{d^m_1}_1)+\cdots+I(h^{d^m_{\ell}}_{\ell})\Big)\subseteq \{0\}$$ satisfying
\begin{equation} \label{NmI} N_m \geq c\delta^{(D'+\epsilon)(k+1)}N_{m-1} \geq (c\delta)^{(D'+\epsilon)(k+1)m} N,
\end{equation}
 
\begin{equation} \label{incsizeI} \delta_m \geq \begin{cases} \delta_{m-1}+c\delta_{m-1}^{D+2\epsilon}& \text{if } D>1  \\ (1+c\log^{-C'}(\delta_{m-1}^{-1}))\delta_{m-1}  & \text{if }D=1, \ell_2+\ell_3>0\\ (1+c)\delta_{m-1} & \text{if } D= 1, \ell_2+\ell_3=0 \text{ or } D<1\end{cases}
\end{equation}
and 
\begin{equation}\label{dmI} d^m_i \leq (c\delta)^{-k(D+\epsilon)}d^{m-1}_i \leq (c\delta)^{-k(D+\epsilon)m},
\end{equation}
as long as 
\begin{equation} \label{delmI} d^m_i, \delta_m^{-1} \leq \begin{cases}N_m^{c_1} &\text{if } \ell_2=\ell_3=0 \\ e^{c_1\sqrt{\log N_m}} &\text{if } \ell_2+\ell_3>0 \end{cases}.
\end{equation}
If $D>1$ and $\ell_2=\ell_3=0$ (and hence $\epsilon=0$), then by (\ref{incsizeI}) we see that the density $\delta_m$ will surpass $1$, and hence (\ref{delmI}) must fail, for $m=C\delta^{-(D-1)}$. In particular, by (\ref{NmI}) and (\ref{dmI}) we must have 
$(c\delta)^{-C\delta^{-(D-1)}}\geq N,$
which implies
\begin{equation*} \delta \ll_{\textnormal{\textbf{h}}} \Big(\frac{\log\log N}{\log N}\Big)^{1/(D-1)},
\end{equation*}
as required. 

\noindent If $D>1$ and $\ell_2+\ell_3>0$, then we make the same observation for $m=C\delta^{-(D-1+2\epsilon)}$, and hence by (\ref{NmI}) and (\ref{dmI}) we must have 
$(c\delta)^{-C\delta^{-(D-1+2\epsilon)}}\geq e^{\sqrt{\log N}},$
which implies
\begin{equation*} \delta \ll_{\textnormal{\textbf{h}},\epsilon} \Big(\frac{\log\log N}{\log N}\Big)^{1/2(D-1+2\epsilon)} \ll_{\textnormal{\textbf{h}},\epsilon} (\log N)^{-1/2(D-1+3\epsilon)}.
\end{equation*}

\noindent Further, if $D=1$ and $\ell_2+\ell_3>0$, then (\ref{delmI}) must fail for $m=C\log^{C'}(\delta^{-1})$, which by (\ref{NmI}) and (\ref{dmI}) yields $(c\delta)^{-C\log^{C'}(\delta^{-1})}\geq e^{\sqrt{\log N}},$ and hence $$\delta \ll_{\textnormal{\textbf{h}}} e^{-(\log N)^{c'}}.$$

\noindent If $D= 1$, $\ell_2+\ell_3=0$, then  we see that  (\ref{delmI}) must fail for $m=C\log(\delta^{-1})$, and by (\ref{NmI}) and (\ref{dmI}) we must have 
$(c\delta)^{-C\log(\delta^{-1})}\geq e^{\sqrt{\log N}},$  which implies $$\delta \ll_{\textnormal{\textbf{h}}} e^{-c(\log N)^{1/4}}.$$ 

\noindent Finally, if $D\leq 1$ and $\ell_2=\ell_3=0$, then again (\ref{delmI}) must fail for $m=C\log(\delta^{-1})$, so by (\ref{NmI}) and (\ref{dmI}) we must have 
$(c\delta)^{-C\log(\delta^{-1})}\geq N,$ and therefore $$\delta \ll_{\textnormal{\textbf{h}}} e^{-c\sqrt{\log N}}.$$ We have now established nontrivial bounds in all cases, and these bounds match the claims in Theorem \ref{main1} outside of the range $1<D\leq2$. \qed

\noindent The philosophy behind the proof of Lemma \ref{mainit} is that a deficiency in the desired arithmetic structure from a set $A$ represents  nonrandom behavior, which should be detected in the Fourier analytic behavior of $A$. Specifically,  we locate one small denominator $q$ such that $\widehat{f_A}$ has $L^2$ concentration around rationals with denominator $q$, then use that information to find a long arithmetic progression on which $A$ has increased density.

\begin{lemma}  \label{L2I} Suppose $A\subseteq [1,N]$ with $|A|=\delta N$, let $\eta=c_0\delta$ for a sufficiently small constant $c_0=c_0(\textnormal{\textbf{h}},\epsilon)>0$, and let $\gamma=\eta^{-(D'+\epsilon)}/N$. If $(A-A)\cap \Big(I(h^{d_1}_1)+\cdots+ I(h^{d_{\ell}}_{\ell})\Big)\subseteq \{0\}$, $d_1,\dots,d_{\ell},\delta^{-1}\leq \cQ$, and  $|A\cap(N/9,8N/9)|\geq 3\delta N/4$, then there exists $q\leq \eta^{-(D+\epsilon)}$ such that 
\begin{equation*} \int_{\mathbf{M}'_q(\gamma)} |\widehat{f_A}(\alpha)|^2\textnormal{d}\alpha \gg_{\textnormal{\textbf{h}},\epsilon} \begin{cases} \delta^{D+1+2\epsilon}N &\text{if } D>1  \\ \delta^{2}\log^{-C}(\delta^{-1}) N & \text{if }  D=1, \ell_2+\ell_3>0 \\ \delta^2N & \text{if } D= 1, \ell_2+\ell_3=0 \text{ or } D<1 \end{cases} 
\end{equation*}
for some $C=C(\textnormal{\textbf{k}})$.
\end{lemma}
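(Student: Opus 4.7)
The plan is to execute a standard Hardy--Littlewood decomposition for the $\ell$-fold Weyl sum
$$S(\alpha)=\sum_{\n\in \tilde H}e^{2\pi\i h(\n)\alpha}=\prod_{i=1}^{\ell_1}\Big(\sum_{n\in H_i}e^{2\pi\i h^{d_i}_i(n)\alpha}\Big)\prod_{j=\ell_1+1}^{\ell}\Big(\sum_{n\in \tilde H_j}e^{2\pi\i h^{d_j}_j(n)\alpha}\Big),$$
converting the combinatorial hypothesis into a Fourier-analytic lower bound and then extracting a single denominator $q$ from the major arcs. First I would set $B=A\cap(N/9,8N/9)$, so that $|B|\ge 3\delta N/4$. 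Since $|h(\n)|<N/9$ for $\n\in\tilde H$ and $h(\n)\ne 0$ by the definition of $\tilde H$, the combinatorial hypothesis forces the count of triples $(b,a,\n)\in B\times A\times\tilde H$ with $b-a=h(\n)$ to vanish. Fourier inversion turns this into the identity $\int_{\T}\widehat{1_B}(\alpha)\,\overline{\widehat{1_A}(\alpha)}\,\overline{S(\alpha)}\,\textnormal{d}\alpha=0$, and after expanding $1_A=f_A+\delta 1_{[1,N]}$ and $1_B=f_B+(|B|/N)1_{[1,N]}$, the pure $1_{[1,N]}\cdot 1_{[1,N]}$ term yields a main contribution $\gg\delta^2 N|\tilde H|$ (using (\ref{Hbig})), while the cross terms are absorbed by Cauchy--Schwarz and Plancherel, producing
$$\int_{\T}|\widehat{f_A}(\alpha)|^2\,|S(\alpha)|\,\textnormal{d}\alpha\gg_{\h}\delta^2 N|\tilde H|.$$

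Next I would dispose of the minor arcs by applying Weyl's inequality (Lemma \ref{weyl2}) to each factor of $S(\alpha)$; with $Q\asymp\eta^{-(D+\epsilon)}$, $\eta=c_0\delta$, and $c_0=c_0(\h,\epsilon)$ sufficiently small, this gives $|S(\alpha)|<\tfrac12\eta|\tilde H|$ on $\mathfrak m(\gamma,Q)$. Coupled with Plancherel $\|f_A\|_2^2\le\delta N$, the minor arc contribution is at most $\tfrac12\delta^2 N|\tilde H|$ and is absorbed, leaving
$$\int_{\mathfrak M(\gamma,Q)}|\widehat{f_A}|^2\,|S|\,\textnormal{d}\alpha\gg\delta^2 N|\tilde H|.$$
On each major arc $\mathbf M_q(\gamma)$ I would use the standard asymptotic formula for Weyl sums near rationals to replace $|S(\alpha)|/|\tilde H|$ by the corresponding product of (sieved or unsieved) Gauss sums, then apply Lemma \ref{gauss2} to each $h^{d_i}_i$ with $1\le i\le\ell_1$, Lemma \ref{gaussB} to each monomial, and Lemma \ref{gaussS} to each remaining sparse polynomial. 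These estimates multiply to $|S(\alpha)|\ll|\tilde H| q^{-1/D}(\log q)^{O(1)}$, with the logarithmic factor appearing only when $\ell_2+\ell_3>0$, so substituting gives
$$\sum_{q\le Q}q^{-1/D}(\log q)^{O(1)}\int_{\mathbf M_q(\gamma)}|\widehat{f_A}(\alpha)|^2\,\textnormal{d}\alpha\gg\delta^2 N.$$

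A (dyadic) pigeonhole over $q$ then extracts a single denominator $q\le Q$ for which $\int_{\mathbf M_q(\gamma)}|\widehat{f_A}|^2\,\textnormal{d}\alpha$ achieves the required lower bound, and since $\mathbf M_q(\gamma)\subseteq\mathbf M'_q(\gamma)$ the same bound passes to $\mathbf M'_q(\gamma)$. The three cases in the conclusion reflect the convergence behaviour of $\sum_{q\le Q}q^{-1/D}$: for $D<1$ the sum converges absolutely so no dyadic loss is incurred and the clean $\delta^2 N$ bound follows; for $D=1$ a $\log^{-C}(\delta^{-1})$ loss appears, which is sharp when $\ell_2+\ell_3>0$ but can be avoided when $\ell_2=\ell_3=0$ thanks to the absence of $(\log q)$-factors in the intersective Gauss sum estimate; and for $D>1$ the worst case $q\asymp\delta^{-(D+\epsilon)}$ yields $\delta^{D+1+2\epsilon}N$ once the logarithmic losses are absorbed into the $\epsilon$. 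The main obstacle is orchestrating the three Gauss sum inputs---classical (\ref{gsintro}) for intersective factors and sieved (\ref{gsintro2}) for monomial and sparse factors---so that they multiply cleanly to the exponent $1/D$; this is also why the sieving denominator $W$ tied to $\eta^{-(D'+\epsilon)}$ (rather than $\eta^{-(D+\epsilon)}$) governs $\gamma$, reflecting the need to accurately count sieved inputs in each $H_i$.
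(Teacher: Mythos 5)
Your overall architecture (count in physical space, pass to a lower bound for $\int|\widehat{f_A}|^2|S|$, kill the minor arcs, bound $|S|$ by Gauss sums on the major arcs) matches the paper's, but the final step --- ``a (dyadic) pigeonhole over $q$ then extracts a single denominator'' --- is where the argument as stated fails. Any pigeonhole over the $\approx\log Q$ dyadic scales (or over the $Q$ individual denominators) necessarily costs a factor of $\log Q\asymp\log(\delta^{-1})$, and this loss is fatal precisely in the cases where $\epsilon=0$, i.e.\ $\ell_2=\ell_3=0$: for $D=1$, $\ell_2=\ell_3=0$ you would get $\delta^2N/\log(\delta^{-1})$ instead of the claimed clean $\delta^2N$, and for $D>1$, $\ell_2=\ell_3=0$ you would get $\delta^{D+1}N/\log(\delta^{-1})$ instead of $\delta^{D+1}N$; these losses propagate to weaker final bounds in Theorem \ref{main1}. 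Your explanation that the logarithm ``can be avoided when $\ell_2=\ell_3=0$ thanks to the absence of $(\log q)$-factors in the intersective Gauss sum estimate'' is not correct: the loss comes from $\sum_{q\le Q}q^{-1}\asymp\log Q$ (equivalently, from the number of scales being pigeonholed over), not from the $C^{\omega(q)}$ factors. The paper avoids it with Proposition \ref{rstrick}, which is the real reason the conclusion is stated for $\mathbf M'_q(\gamma)=\bigcup_{r\mid q}\mathbf M_r(\gamma)$ rather than $\mathbf M_q(\gamma)$: one weights by $qb(q)$, uses $\int_{\mathbf M'_q}=\sum_{r\mid q}\int_{\mathbf M_r}$ to resum over multiples, and exploits $b(qr)\ge b(r)/q$ so that each $\mathbf M_r$ is counted $\gg Q/r$ times with weight $\gtrsim rb(r)$, recovering the full sum $\sum_r b(r)\int_{\mathbf M_r}$ at the cost of only the averaged quantity $Q^{-1}\sum_q qb(q)$, which is $O(1)$ when $D=1$ and $\ell_2=\ell_3=0$. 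Your closing remark that ``$\mathbf M_q(\gamma)\subseteq\mathbf M'_q(\gamma)$ so the same bound passes over'' indicates you have not seen why the primed arcs are needed at all.

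Two smaller points. First, the cross terms in your opening identity are not ``absorbed by Cauchy--Schwarz and Plancherel'': a term like $\int|\widehat{f_A}||\widehat{1_{[1,N]}}||S|$ can be as large as $\delta^{1/2}N|\tilde H|$, which dwarfs the main term $\delta^2N|\tilde H|$. These terms must be estimated in physical space, and this is exactly where the hypothesis $|A\cap(N/9,8N/9)|\ge 3\delta N/4$ enters quantitatively (the paper works directly with $\sum_{x,\n} f_A(x)f_A(x+h(\n))$ and bounds each of the four resulting sums by hand, obtaining the key sign and size information). Second, Weyl's inequality alone cannot give $|S(\alpha)|\le\delta\tilde M/16$ on all of $\mathfrak m(\gamma,Q)$: for $\alpha$ near $a/q$ with $q$ only slightly larger than $Q\approx\delta^{-(D+\epsilon)}$, the gain from Lemma \ref{weyl2} is only about $q^{-2^{-k}}\gg\delta$, no matter how small $c_0$ is. The intermediate ranges (small $q$ with $|\beta|\ge\gamma$, and $Q<q\le N^{2^{-5k}}$) require the major-arc asymptotic formulae combined with the Gauss sum bounds and van der Corput's lemma, as carried out in Appendix \ref{apxA}.
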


\noindent Lemma \ref{mainit} follows from Lemma \ref{L2I} and the following standard $L^2$ density increment lemma.

\begin{lemma}[Lemma 2.3 in \cite{thesis}, see also \cite{Lucier}, \cite{Ruz}] \label{dinc} Suppose $A \subseteq [1,N]$ with $|A|=\delta N$. If  $0< \theta \leq 1$ and
\begin{equation*} \int_{\mathbf{M}'_q(\gamma)}|\widehat{f_A}(\alpha)|^2\textnormal{d}\alpha \geq \theta\delta^2 N,
\end{equation*} 
then there exists an arithmetic progression 
\begin{equation*}P=\{x+\ell q : 1\leq \ell \leq L\}
\end{equation*}
with $qL \gg \min\{\theta N, \gamma^{-1}\}  $ and $|A\cap P| \geq \delta(1+\theta/32)L$.
\end{lemma}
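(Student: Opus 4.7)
The plan is to prove this standard $L^2$ density increment via a Parseval-plus-sign argument: convolve $f_A$ with the indicator of a long arithmetic progression whose Fourier transform concentrates on $\mathbf{M}'_q(\gamma)$, and then pigeonhole a translate on which $A$ is dense.

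First I would set $L = c\min\{\theta N/q,\ 1/(q\gamma)\}$ for a small absolute constant $c>0$, take $\phi = \mathbf{1}_P$ with $P = \{q, 2q, \dots, Lq\}$, and write $\tilde{\phi}(x) = \phi(-x)$. Since $\widehat{\phi}(a/q + \beta) = \sum_{\ell=1}^L e^{-2\pi\textnormal{i}\ell q\beta}$ and the choice $Lq\gamma \leq c$ keeps the phases close to $1$, one gets $|\widehat{\phi}(\alpha)|^2 \geq L^2/4$ for every $\alpha \in \mathbf{M}'_q(\gamma)$. Parseval then yields
\[
\sum_x (f_A \ast \tilde{\phi})(x)^2 = \int_\T |\widehat{f_A}(\alpha)|^2|\widehat{\phi}(\alpha)|^2\, d\alpha \geq \tfrac{L^2}{4}\int_{\mathbf{M}'_q(\gamma)} |\widehat{f_A}(\alpha)|^2\, d\alpha \geq \tfrac{\theta\delta^2 L^2 N}{4}.
\]
Writing $g(x) = (f_A \ast \tilde{\phi})(x) = |A\cap(x+P)| - \delta\,|[1,N]\cap(x+P)|$, I would record the three elementary facts: the pointwise bounds $-\delta L \leq g(x) \leq L$, the zero-sum identity $\sum_x g(x) = \widehat{f_A}(0)\widehat{\phi}(0) = 0$, and the support containment $\mathrm{supp}(g) \subseteq [1-Lq,\,N]$.

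The next step is a sign argument to extract a translate with large positive $g$. Splitting $\sum g^2 = \sum g_+^2 + \sum g_-^2$ and bounding these by $\|g\|_\infty \sum g_+$ and $\delta L \sum g_-$ respectively, together with the identity $\sum g_+ = \sum g_-$ forced by $\sum g = 0$, gives
\[
\sum_x g(x)^2 \leq (\|g\|_\infty + \delta L)\sum_x g_+(x) \leq (\|g\|_\infty + \delta L)\,\|g\|_\infty\,|\mathrm{supp}(g)|,
\]
and comparison against the Parseval lower bound forces $\|g\|_\infty \gg \theta\delta L$. For an $x$ attaining (a constant fraction of) this maximum, the translate $P_x = x+P$ satisfies $|A\cap P_x| = g(x) + \delta|[1,N]\cap P_x| \geq \delta L(1+\theta/32)$ once the constants are tuned, while by construction $qL \gg \min\{\theta N,\gamma^{-1}\}$.

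The main obstacle is ensuring that this maximum is attained at an $x$ for which $P_x$ lies essentially inside $[1,N]$, since otherwise the identity for $g(x)$ need not translate into a density increment on a full-length $L$-term progression. This is handled by a standard preliminary reduction: if $A$ is more than twice as dense on either flanking interval $[1,Lq]$ or $[N-Lq+1,N]$ as on $[1,N]$, one partitions that flank into $q$ arithmetic progressions of common difference $q$ and length $L$, pigeonholes one of density exceeding $2\delta$, and outputs it directly (which trivially satisfies $|A\cap P|\geq 2\delta L\geq \delta(1+\theta/32)L$); otherwise the contribution of boundary $x$ to $\sum g(x)^2$ is negligible (the number of such $x$ is $O(Lq)$, which is smaller than $N$ by our choice of $L$), so the maximum is attained in the interior. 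The remaining book-keeping to secure precisely the factor $\theta/32$ is routine but delicate.
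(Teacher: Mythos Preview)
The paper does not actually prove this lemma; it is quoted with attribution to \cite{thesis}, \cite{Lucier}, and \cite{Ruz}. Your sketch follows exactly the standard argument one finds in those references: convolve $f_A$ with the indicator of a progression of step $q$ and length $L\asymp q^{-1}\min\{\theta N,\gamma^{-1}\}$, use Parseval and the lower bound on $|\widehat{\phi}|$ over $\mathbf{M}'_q(\gamma)$ to force $\sum_x g(x)^2\gg \theta\delta^2L^2N$, and then extract a large positive value of $g$ via the zero-sum/sign argument.

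One slip worth flagging: your inequality chain only delivers $\|g\|_\infty\gg\theta\delta L$, but to obtain a translate with \emph{increased} density you need $\max_x g_+(x)\gg\theta\delta L$, and in principle $\|g\|_\infty$ could be realized by $g_-$ (which is bounded by $\delta L$, a quantity that already dominates $\theta\delta L$ trivially). The fix is immediate: run the same two inequalities with $M:=\max_x g_+(x)$ in place of $\|g\|_\infty$, namely $\sum g_+^2\le M\sum g_+$ and $\sum g_+\le M\,|\mathrm{supp}(g)|$, which yields $\theta\delta^2L^2/8\le M(M+\delta L)$ and hence $M\ge\theta\delta L/16$. With that correction the argument is complete, and your boundary reduction (either a flank already carries density $\ge 2\delta$, or boundary $x$ contribute negligibly to $\sum g^2$) is the standard way to guarantee the maximizing translate sits inside $[1,N]$.
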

 
\subsection*{Proof of Lemma \ref{mainit}}
 Suppose $A\subseteq [1,N]$, $|A|=\delta N$, $(A-A)\cap \Big(I(h^{d_1}_1)+\cdots+ I(h^{d_{\ell}}_{\ell})\Big)\subseteq \{0\}$, and $d_1,\dots,d_{\ell}, \delta^{-1}\leq \cQ$. If $|A\cap (N/9,8N/9)| < 3\delta N/4$, then $\max \{ |A\cap[1,N/9]|, |A\cap [8N/9,N]| \} > \delta N/8$. In other words, $A$ has density at least $9\delta/8$ on one of these intervals. Otherwise, Lemmas \ref{L2I} and \ref{dinc} apply, so in either case, letting $\eta=c_0\delta$, there exists $q\leq \eta^{-(D+\epsilon)}$ and an arithmetic progression 
\begin{equation*}P=\{x+\ell q : 1\leq \ell \leq L\}
\end{equation*}
with $qL\gg_{\textnormal{\textbf{h}},\epsilon} \delta^{D'+\epsilon} N$ and $$|A\cap P|/L \geq \begin{cases} \delta+c\delta^{D+2\epsilon}& \text{if } D>1  \\ (1+c\log^{-C}(\delta^{-1}))\delta  & \text{if } D=1, \ell_2+\ell_3>0 \\ (1+c)\delta & \text{if } D= 1, \ell_2+\ell_3=0 \text{ or } D<1\end{cases} .$$ Partitioning $P$ into subprogressions of step size $\lambda(q)$, the pigeonhole principle yields a progression 
\begin{equation*} P'=\{y+\ell \lambda(q) : 1\leq \ell \leq N'\} \subseteq P
\end{equation*}
with $N'\geq qL/2\lambda(q)$ and $|A\cap P'|/N' \geq |A\cap P|/L$. This allows us to define a set $A' \subseteq [1,N']$ by \begin{equation*} A' = \{\ell \in [1,N'] : y+\ell \lambda(q) \in A \},
\end{equation*} which satisfies $|A'|=|A\cap P'|$ and $N'\gg_{\textbf{k},\epsilon} \delta^{D'+\epsilon}N/\lambda(q) \gg_{\textnormal{\textbf{h}},\epsilon} \delta^{(D'+\epsilon)(k+1)}N$. Moreover, by Proposition \ref{inh}, $(A-A)\cap \Big(I(h^{d_1}_1)+\cdots+ I(h^{d_{\ell}}_{\ell})\Big)\subseteq \{0\}$ implies $(A'-A')\cap \Big(I(h^{\tilde{\lambda}_1(q)d_1}_1)+\cdots+ I(h^{\tilde{\lambda}_{\ell}(q)d_{\ell}}_{\ell})\Big)\subseteq \{0\}$. \qed \\

\noindent Our task for this section is now completely reduced to a proof of Lemma \ref{L2I}.

\subsection*{Proof of Lemma \ref{L2I}} Suppose $A\subseteq [1,N]$ with $|A|=\delta N$ and $(A-A)\cap I(h^{d_1}_1)+\cdots+I(h^{d_{\ell}}_{\ell})\subseteq \{0\}$.  Further, let $\eta=c_0\delta$ for an appropriately small $c_0=c_0(\textnormal{\textbf{h}},\epsilon)>0$ and let $Q=\eta^{-(D+\epsilon)}$. Since $h^{d_i}_i(H_i)\subseteq [-N/9\ell,N/9\ell]$, 
\begin{align*} \sum_{\substack{x \in \Z \\ \n\in \tilde{H}}} f_A(x)f_A(x+h(\n))&=\sum_{\substack{x \in \Z \\ \n\in \tilde{H}}} 1_A(x)1_A(x+h(\n)) -\delta\sum_{\substack{x \in \Z \\ \n\in \tilde{H}}} 1_A(x)1_{[1,N]}(x+h(\n)) \\\\ &-\delta \sum_{\substack{x \in \Z \\ \n\in \tilde{H}}} 1_{[1,N]}(x+h(\n))1_A(x) +\delta^2\sum_{\substack{x \in \Z \\ \n\in \tilde{H}}} 1_{[1,N]}(x)1_{[1,N]}(x+h(\n))  \\\\&\leq \Big(\delta^2N -2\delta|A\cap (N/9,8N/9)|\Big)|\tilde{H}|.
\end{align*}
Therefore, if $|A \cap (N/9, 8N/9)| \geq 3\delta N/4$, then by (\ref{Hbig}) we have
\begin{equation}\label{neg} \sum_{\substack{x \in \Z \\ \n\in \tilde{H}}} f_A(x)f_A(x+h(\n)) \leq -\delta^2N\tilde{M}/4.
\end{equation} 
One can easily check using (\ref{symdif}) and orthogonality of characters that 
\begin{equation}\label{orth}
\sum_{\substack{x \in \Z \\ \n\in \tilde{H}}} f_A(x)f_A(x+h(\n))=\int_0^1 |\widehat{f_A}(\alpha)|^2 S(\alpha)\textnormal{d}\alpha +O_{\textbf{h}}(N\tilde{M}/\min{\tilde{M}_i}),
\end{equation} 
where 
\begin{equation*}S_i(\alpha)= \sum_{n=1}^{M_i}e^{2\pi i h^{d_i}_i(n)\alpha}, \quad \tilde{S}_i(\alpha)= \sum_{\substack{n=1 \\ (n,W)=1}}^{M_i}e^{2\pi i h^{d_i}_i(n)\alpha}, \quad \text{and} \quad S(\alpha)=\prod_{i=1}^{\ell_1}S_i(\alpha)\prod_{j=\ell_1+1}^{\ell}\tilde{S}_j(\alpha).
\end{equation*} 
Combining (\ref{neg}) and (\ref{orth}), we have that if $\delta^{-1}\leq \cQ$ then 
\begin{equation} \label{mass}
\int_0^1 |\widehat{f_A}(\alpha)|^2|S(\alpha)|\textnormal{d}\alpha \geq \delta^2N\tilde{M}/8.
\end{equation} Letting $\gamma=\eta^{-(D'+\epsilon)}/N$, it follows from various exponential sum estimates and observations of Lucier on auxiliary polynomials that if $d_1,\dots ,d_{\ell}, \delta^{-1} \leq \cQ$, then for $\alpha \in \mathbf{M}_q(\gamma), \ q\leq Q $, we have

\begin{equation} \label{SmajII} |S(\alpha)| \ll_{\textnormal{\textbf{h}}} b(q)\tilde{M},  
\end{equation} where $$b(q)=\begin{cases}q^{-1/D} & \text{if } \ell_2+\ell_3=0 \\ C^{\omega(q)} q^{-1/D} & \text{if }\ell_2+\ell_3>0 \end{cases}, $$ $\omega(q)$ is the number of distinct prime factors of $q$, and $C=C(\textbf{k})$. Further, for $\alpha \in \mathfrak{m}(\gamma,Q)$ we have 
\begin{equation} \label{SminII} |S(\alpha)| \leq  \delta \tilde{M}/16 ,
\end{equation} provided $c_0$ was chosen sufficiently small. Details of these estimates are provided in Appendix \ref{apxA}.

\noindent From (\ref{SminII}) and Plancherel's Identity, we have \begin{equation*}  \int_{\mathfrak{m}(\gamma,Q)} |\widehat{f_A}(\alpha)|^2|S(\alpha)|\textnormal{d}\alpha \leq \delta^2N\tilde{M}/16, \end{equation*} which together with (\ref{mass}) yields \begin{equation}\label{majmass}  \int_{\mathfrak{M}(\gamma,Q)}|\widehat{f_A}(\alpha)|^2|S(\alpha)|\textnormal{d}\alpha \geq \delta^2 N\tilde{M}/16. \end{equation} 
From (\ref{SmajII}) and (\ref{majmass}) , we have 
\begin{equation} \label{majmassII} \sum_{q=1}^Q  b(q)\int_{\mathbf{M}_q(\gamma)}|\widehat{f_A}(\alpha)|^2 {d}\alpha \gg_{\textnormal{\textbf{h}}} \delta^2N.
\end{equation}
If $D\geq 1$, then the function $b(q)$ satisfies $b(qr)\geq b(r)/q$, and we make use of the following proposition.
\begin{proposition}\label{rstrick} For any $\gamma,Q>0$ satisfying $2\gamma Q^2<1$ and any function $b: \N \to [0,\infty)$ satisfying $b(qr)\geq b(r)/q$, we have $$\max_{q\leq Q} \int_{\mathbf{M}'_q(\gamma)}|\widehat{f_A}(\alpha)|^2 {d}\alpha \geq Q \Big(2\sum_{q=1}^Q qb(q)\Big)^{-1} \sum_{r=1}^Q b(r)\int_{\mathbf{M}_r(\gamma)}|\widehat{f_A}(\alpha)|^2 {d}\alpha. $$
\end{proposition}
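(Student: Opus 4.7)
The plan is to deduce the inequality from a weighted averaging argument: if one can choose nonnegative weights $w_q$ such that for every $r\leq Q$ the partial sum $\sum_{q\leq Q,\,r\mid q}w_q$ dominates a specified multiple of $b(r)$, then $\max_q \int_{\mathbf{M}'_q(\gamma)}|\widehat{f_A}|^2$ is bounded below by the corresponding weighted average. The natural (and I claim correct) choice is $w_q=q\,b(q)$, which pairs well with the divisor hypothesis $b(qr)\geq b(r)/q$ and explains the appearance of $\sum_{q=1}^Q qb(q)$ in the denominator.

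First I would unfold $\mathbf{M}'_q(\gamma)=\bigcup_{r\mid q}\mathbf{M}_r(\gamma)$, and use the disjointness guaranteed by $2\gamma Q^2<1$ via (\ref{majdisj}) to write
\begin{equation*}
\int_{\mathbf{M}'_q(\gamma)}|\widehat{f_A}(\alpha)|^2\,d\alpha=\sum_{r\mid q}\int_{\mathbf{M}_r(\gamma)}|\widehat{f_A}(\alpha)|^2\,d\alpha.
\end{equation*}
Multiplying by $qb(q)$, summing over $q\leq Q$, and interchanging the order of summation yields
\begin{equation*}
\sum_{q=1}^{Q}qb(q)\int_{\mathbf{M}'_q(\gamma)}|\widehat{f_A}|^2\,d\alpha=\sum_{r=1}^{Q}\Bigl(\sum_{\substack{q\leq Q\\ r\mid q}}qb(q)\Bigr)\int_{\mathbf{M}_r(\gamma)}|\widehat{f_A}|^2\,d\alpha.
\end{equation*}

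The crux is then the combinatorial inequality $\sum_{q\leq Q,\,r\mid q}qb(q)\geq (Q/2)b(r)$ for every $r\leq Q$. Writing $q=rm$ with $1\leq m\leq\lfloor Q/r\rfloor$, the hypothesis gives $qb(q)=rm\cdot b(rm)\geq rm\cdot b(r)/m=rb(r)$, so the inner sum is at least $rb(r)\lfloor Q/r\rfloor$. To conclude, I would verify that $r\lfloor Q/r\rfloor\geq Q/2$ for all $r\leq Q$: if $\lfloor Q/r\rfloor=k\geq 1$ then $Q/(k+1)<r$, hence $r\lfloor Q/r\rfloor =kr>kQ/(k+1)\geq Q/2$.

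Combining these yields
\begin{equation*}
\sum_{q=1}^{Q}qb(q)\int_{\mathbf{M}'_q(\gamma)}|\widehat{f_A}|^2\,d\alpha\ \geq\ \frac{Q}{2}\sum_{r=1}^{Q}b(r)\int_{\mathbf{M}_r(\gamma)}|\widehat{f_A}|^2\,d\alpha,
\end{equation*}
and bounding the left-hand side by $\bigl(\sum_{q\leq Q}qb(q)\bigr)\max_{q\leq Q}\int_{\mathbf{M}'_q(\gamma)}|\widehat{f_A}|^2$ gives the proposition after dividing. The only nontrivial step is really the selection of the weight $w_q=qb(q)$; once that is in place, the divisor-sum bound and the elementary inequality $r\lfloor Q/r\rfloor\geq Q/2$ are both clean and mechanical, so I do not anticipate a genuine obstacle.
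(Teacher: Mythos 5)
Your argument is correct and is essentially identical to the paper's proof: both weight by $qb(q)$, use the disjointness from $2\gamma Q^2<1$ to decompose $\mathbf{M}'_q(\gamma)$ over divisors, interchange the sums, and apply $b(qr)\geq b(r)/q$ together with $r\lfloor Q/r\rfloor\geq Q/2$. Your explicit verification of that last elementary inequality is a small bonus the paper leaves implicit.
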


\begin{proof} By (\ref{majdisj}) we have \begin{align*} \Big(\sum_{q=1}^Q qb(q)\Big) \max_{q\leq Q} \int_{\mathbf{M}'_q(\gamma)}|\widehat{f_A}(\alpha)|^2 {d}\alpha &\geq \sum_{q=1}^Q qb(q) \int_{\mathbf{M}'_q(\gamma)}|\widehat{f_A}(\alpha)|^2 {d}\alpha \\\\ &= \sum_{q=1}^Q qb(q) \sum_{r|q} \int_{\mathbf{M}_r(\gamma)}|\widehat{f_A}(\alpha)|^2 {d}\alpha \\\\ &= \sum_{r=1}^Q \int_{\mathbf{M}_r(\gamma)}|\widehat{f_A}(\alpha)|^2 {d}\alpha \sum_{q=1}^{Q/r} qrb(qr) \\\\ &\geq \frac{Q}{2}\sum_{r=1}^Q b(r)\int_{\mathbf{M}_r(\gamma)}|\widehat{f_A}(\alpha)|^2 {d}\alpha,
\end{align*} 

\noindent where the last inequality comes from replacing $b(qr)$ with $b(r)/q$, and the proposition follows.
\end{proof}
\noindent Using the known estimate

\begin{equation*} \sum_{q=1}^Q C^{\omega(q)} \ll_C Q\log^C Q \quad \text{(see \cite{Selberg})}
\end{equation*}
for any $C>0$, we see that 
\begin{equation} \label{bqavg} Q^{-1}\sum_{q=1}^Q qb(q) \ll_{\textbf{k},\epsilon} \begin{cases} 1 & \text{if } D=1, \ell_2+\ell_3=0 \\ \log^C(\delta^{-1}) &\text{if } D=1, \ell_2+\ell_3>0 \\ \delta^{-(D-1+2\epsilon)} &\text{if } D>1\end{cases},
\end{equation}
which combined with (\ref{majmassII}) and Proposition \ref{rstrick} establishes the lemma for $D\geq 1$.   If $D<1$, then the lemma follows from (\ref{majmassII}) and the fact that $\displaystyle{\sum_{q=1}^{\infty} b(q)}$ converges. \qed     

\section{S\'ark\"ozy's Method with Prime Inputs: Theorem \ref{main2} for $D'>2$ and $D'\leq 1$} \label{primesec}
Structurally speaking, the arguments for Theorem \ref{main2} are essentially the same as those for Theorem \ref{main1}, with careful adaptations required to account for our somewhat limited understanding of the distribution of primes in arithmetic progressions. 
\subsection{Counting primes in arithmetic progressions} 
  
For $x,a,q\in \N$, we define 
\begin{equation*} \psi(x,a,q) = \sum_{\substack{p\leq x \\ p\equiv a \text{ mod } q}} \log p,
\end{equation*} 
where the sum is taken over primes. The classical estimates on $\psi(x,a,q)$ come from the famous Siegel-Walfisz Theorem, which can be found for example in Corollary 11.19 of \cite{MV}.
\begin{lemma}[Siegel-Walfisz Theorem] \label{SW} If $q\leq (\log x)^B$, and $(a,q)=1$, then $$ \psi(x,a,q) = x/\phi(q) + O(xe^{-c\sqrt{\log x}}) $$ for some constant $c=c(B)>0$.
\end{lemma}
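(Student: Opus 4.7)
\medskip

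\noindent\textbf{Proof proposal for Lemma \ref{SW} (Siegel--Walfisz).} The plan is to reduce the asymptotic for $\psi(x,a,q)$ to bounds on $\psi(x,\chi) = \sum_{n\leq x} \chi(n)\Lambda(n)$ for Dirichlet characters $\chi$ mod $q$, and then control these via contour integration of $-L'/L(s,\chi)$ against known zero-free regions for Dirichlet $L$-functions. First I would apply orthogonality of the characters modulo $q$ to write
\begin{equation*}
\psi(x,a,q) = \frac{1}{\phi(q)}\sum_{\chi \bmod q} \bar{\chi}(a)\,\psi(x,\chi),
\end{equation*}
isolate the principal character $\chi_0$ (which contributes $x/\phi(q)$ up to an $O(\log(qx)\log q)$ error coming from prime powers and from primes dividing $q$), and reduce the problem to showing $\psi(x,\chi) \ll x e^{-c\sqrt{\log x}}$ uniformly for every nonprincipal $\chi$ mod $q$ with $q\leq (\log x)^B$.

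Next I would estimate $\psi(x,\chi)$ by a truncated Perron-style formula, moving the contour of $-\frac{1}{2\pi i}\int \frac{L'}{L}(s,\chi)\frac{x^s}{s}\,ds$ to the left of the line $\mathrm{Re}(s)=1$. The main contribution beyond the trivial pole comes from zeros of $L(s,\chi)$ in a standard de la Vallée Poussin zero-free region of the form $\mathrm{Re}(s) \geq 1 - c'/\log(q(|t|+2))$; combined with the convexity bound $|L'/L(s,\chi)| \ll \log^2(q(|t|+2))$ on this region, a standard estimate (see e.g.\ Theorems 5.27 and 11.16 of \cite{MV}) yields $\psi(x,\chi) \ll x \exp\bigl(-c\sqrt{\log x}\bigr)$ \emph{provided} no exceptional real zero obstructs the contour shift.

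The hard part is disposing of a potential Siegel zero. For complex $\chi$ this is automatic from the classical zero-free region, and for real nonprincipal $\chi$ one invokes Siegel's theorem: for every $\epsilon>0$ there exists $C(\epsilon)>0$ (ineffective) such that every real nonprincipal $\chi$ mod $q$ satisfies $L(s,\chi)\neq 0$ for $s > 1 - C(\epsilon)q^{-\epsilon}$. Taking $\epsilon$ small in terms of the parameter $B$ from the hypothesis $q \leq (\log x)^B$ pushes any such real zero to the left of $1 - c''/\log^{1/2}x$, which is compatible with the stated error term at the (expected, ineffective) cost of allowing $c=c(B)$ to depend on $B$.

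Finally I would collect: the principal character yields the main term $x/\phi(q)$ modulo a negligible error, and each of the $\phi(q)\leq (\log x)^B$ nonprincipal contributions is at most $xe^{-c\sqrt{\log x}}$; the factor $\phi(q)$ is absorbed by shrinking $c$ slightly, yielding the claimed bound. I expect the Siegel-zero step to be the only genuinely delicate ingredient, since the rest is a routine application of the explicit formula paired with standard zero-free regions; note that this is precisely why the constant $c(B)$ must be taken ineffective.
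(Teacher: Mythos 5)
The paper does not prove this lemma; it simply cites it (Corollary 11.19 of \cite{MV}), as is standard for a classical result of this type. Your sketch is a correct outline of exactly the argument found there — orthogonality of characters, the truncated explicit formula against the de la Vall\'ee Poussin zero-free region, and Siegel's (ineffective) theorem to push a possible exceptional real zero below $1-c''(\log x)^{-1/2}$ when $q\leq(\log x)^{B}$ — so it matches the intended proof in approach and in all essential details.
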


\noindent Ruzsa and Sanders \cite{Ruz} established asymptotics for $\psi(x,a,q)$ for certain moduli $q$ beyond the  limitations of Lemma \ref{SW} by exploiting a dichotomy based on exceptional zeros, or lack thereof, of Dirichlet $L$-functions. In particular, the following result follows from their work.

\begin{lemma} \label{RS} For any $\cQ,B>0$, there exist $q_0 \leq \cQ^{B}$ and $\rho \in [1/2,1)$ with $(1-\rho)^{-1} \ll q_0$ such that 

\begin{equation}\label{lb} \psi(x,a,q)= \frac{x}{\phi(q)}-\frac{\chi(a)x^{\rho}}{\phi(q)\rho}+O\Big(x \exp\Big(-\frac{c\log x}{\sqrt{\log x}+B^2\log \cQ}\Big)B^2\log \cQ\Big),
\end{equation}

\noindent where $\chi$ is a Dirichlet character modulo $q_0$, provided $q_0 \mid q$, $(a,q)=1$,  and $q \leq (q_0\cQ)^{B}$.
\end{lemma}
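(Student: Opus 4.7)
The plan is to argue from the Dirichlet-character orthogonality relation
\[
\psi(x,a,q)=\frac{1}{\phi(q)}\sum_{\chi\bmod q}\overline{\chi}(a)\,\psi(x,\chi),\qquad \psi(x,\chi)=\sum_{n\le x}\chi(n)\Lambda(n),
\]
combined with the truncated explicit formula that writes each $\psi(x,\chi)$ as a sum over the non-trivial zeros of $L(s,\chi)$ up to height $T$, plus a uniform error of size $O(x(\log qT)^{2}/T)$. The principal character contributes the main term $x/\phi(q)$; for any other $\chi$ of conductor at most $(q_{0}\cQ)^{B}$ the only contribution that can compete with the ultimate error comes from zeros very close to the line $\textnormal{Re}(s)=1$.

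Next I would invoke the Page--Landau theorem: among all primitive characters of conductor at most $\cQ^{B}$, there is at most one (necessarily real, of some conductor $q_{0}\le \cQ^{B}$) whose $L$-function admits a real zero $\rho\in[1/2,1)$ exceeding the width of a classical zero-free region, and Page's bound then gives $(1-\rho)^{-1}\ll q_{0}$. If no such exceptional zero exists, set $q_{0}=1$ and $\rho=1/2$, so the purported secondary main term is subsumed into the final error. When it does exist, the hypothesis $q_{0}\mid q$ allows one to lift $\chi$ from modulus $q_{0}$ to modulus $q$ (valid since $(a,q)=1$ forces $(a,q_{0})=1$), and isolating this single zero from the explicit formula produces $-\chi(a)x^{\rho}/(\phi(q)\rho)$ after the sum over $\chi$. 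For every remaining zero of every $L(s,\chi)$ with $\chi$ of conductor $\le(q_{0}\cQ)^{B}$, one applies the standard Vinogradov--Korobov (or even classical de la Vall\'ee Poussin) zero-free region, which excludes zeros in a strip of width proportional to $1/(B^{2}\log\cQ+\log T)$.

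The error in the explicit formula is then bounded by balancing this zero-free strip against the number of zeros below height $T$, using the standard estimate $N(T,\chi)\ll T\log(qT)$. Choosing $T$ so that the two competing contributions are equal (essentially $\log T \sim \sqrt{\log x}$) produces the claimed shape $x\exp(-c\log x/(\sqrt{\log x}+B^{2}\log \cQ))\cdot B^{2}\log\cQ$. I expect the main obstacle to be the careful bookkeeping in the zero-free region as the conductor ranges up to $(q_{0}\cQ)^{B}$: one must ensure that after isolating the at-most-one exceptional character, the quality of the zero-free region depends on $B^{2}\log\cQ$ and not on $q_{0}$ itself, which is precisely what allows the exceptional modulus $q_{0}$ to be absorbed into the clean $(1-\rho)^{-1}\ll q_{0}$ bound and yields a usable statement in subsequent density-increment iterations.
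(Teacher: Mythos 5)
Your outline is mathematically sound, but it is worth noting that the paper does not prove this lemma analytically at all: it is deduced in one line as a ``purpose-built special case'' of Proposition 4.7 of Ruzsa--Sanders \cite{Ruz}, applied to the pair $(\cQ^{B^2+B},\cQ^{B})$, with $q_0$ taken to be the exceptional modulus if that pair is exceptional and $q_0=1$ otherwise. What you have sketched is, in effect, the proof of that underlying proposition: orthogonality of characters, the truncated explicit formula, Landau--Page to isolate at most one exceptional real zero among conductors up to $\cQ^{B}$, the classical zero-free region for everything else (with width governed by $\log\bigl((q_0\cQ)^{B}T\bigr)\ll B^{2}\log\cQ+\log T$, which is exactly why the error depends on $B^{2}\log\cQ$ and not on $q_0$), and the choice $\log T\asymp\sqrt{\log x}$ to balance the truncation error against the zero-sum. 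Two details you handled correctly that are worth making explicit if you write this up: the bound $(1-\rho)^{-1}\ll q_0$ comes from the \emph{effective} Page estimate $1-\beta\gg q_0^{-1/2}\log^{-2}q_0$ (so no Siegel-type ineffectivity enters, which matters since the constant $c$ in the final bounds of the paper is claimed to be effective), and the exceptional character is real, so writing $\chi(a)$ rather than $\overline{\chi}(a)$ in the secondary term is harmless. Your route buys a self-contained argument; the paper's route buys brevity and offloads the delicate bookkeeping (uniformity in the conductor after inducing $\chi$ from $q_0$ to $q$, and the exceptional/unexceptional dichotomy) to \cite{Ruz}.
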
  

\noindent Lemma \ref{RS} is a purpose-built special case of Proposition 4.7 of \cite{Ruz}, which in the language of that paper can be deduced by considering the pair $(\cQ^{B^2+B},\cQ^{B})$, where $q_0$ is the modulus of the exceptional Dirichlet character if the pair is exceptional and $q_0=1$ if the pair is unexceptional. 

\noindent It is a calculus exercise to verify that if $\beta\in [0,1/2]$ and $x\geq 16$, then $1-x^{-\beta}/(1-\beta)\geq \beta,$ which implies that the main term in Lemma \ref{RS} satisfies 
\begin{equation}\label{x/q0}\Re\Big((x-\chi(a)x^{\rho}/\rho)/\phi(q)\Big) \geq (1-\rho) x/\phi (q) \gg x/q_0\phi(q). \end{equation} 

\subsection{Main iteration lemma} For the remainder of this section, we fix nonzero $\P$-intersective polynomials $h_1,\dots,h_{\ell}\in \Z[x]$.
Unlike in Section \ref{unr}, we also must fix at the outset a natural number $N$, in order to carefully apply estimates on $\psi(x,a,q)$. 

\noindent Specifically, we let $\cQ=e^{c_1\sqrt{\log N}}$ for a sufficiently small constant $c_1=c_1(\textbf{k})>0$, and we apply Lemma \ref{RS} with $B=10K$, letting $q_0\leq \cQ^{10K}$, $\rho \in [1/2,1)$, and the Dirichlet character $\chi$ be as in the conclusion. 

\noindent We see that if $c_1$ is sufficiently small and $X\geq N^{1/10k}$, then  
\begin{equation} \label{PsiAsymPI} \psi(x,a,q)= \frac{x}{\phi(q)}-\frac{\chi(a)x^{\rho}}{\phi(q)\rho}+O(X\cQ^{-1000K^2})
\end{equation} 
for all $x\leq X$, provided $q_0 \mid q$, $(a,q)=1$, and $q\leq(q_0\cQ)^{10K}$.

\noindent We deduce Theorem \ref{main2} (outside of $1<D'\leq2$) from the following analog of Lemma \ref{mainit}.

\begin{lemma} \label{main5} Suppose $A\subseteq [1,L]$ with $|A|=\delta L$ and $L\geq \sqrt{N}$.  If  $$(A-A)\cap \Big(\V_1^{d_1}(h^{d_1}_1)+\cdots+\V^{d_{\ell}}_{\ell}(h^{d_{\ell}}_{\ell})\Big)\subseteq \{0\},$$ $q_0 \mid d_i$ and $d_i/q_0,\delta^{-1} \leq \cQ,$ for $1\leq i \leq \ell$, then there exists $q\ll_{\textnormal{\textbf{h}},\epsilon} \delta^{-(D'+\epsilon)}$ and $A'\subseteq [1,L']$  with $L' \gg_{\textnormal{\textbf{h}}.\epsilon} \delta^{(D'+\epsilon)(k+1)} L,$ \begin{equation*}\frac{|A'|}{L'} \geq \begin{cases} \delta+c\delta^{D'+2\epsilon}& \text{if } D'>1  \\ (1+c\log^{-C}(\delta^{-1}))\delta  & \text{if } D'=1, \ell>1 \\ (1+c)\delta & \text{if } D'<1\text{ or } \ell=k_1=1\end{cases} , \end{equation*} and $$(A'-A')\cap \Big(\V^{\tilde{\lambda}_1(q)d_1}_1(h^{\tilde{\lambda}_1(q)d_1}_1)+\cdots+ \V^{\tilde{\lambda}_{\ell}(q)d_{\ell}}_{\ell}(h^{\tilde{\lambda}_{\ell}(q)d_{\ell}}_{\ell})\Big)\subseteq \{0\}$$ for some $c=c(\textnormal{\textbf{h}},\epsilon)>0$ and $C=C(\textnormal{\textbf{k}})$.\end{lemma}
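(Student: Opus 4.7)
The plan is to mirror the proof of Lemma \ref{mainit} from Section \ref{unr}, with the essential modification that all Weyl-type sums are now restricted to prime inputs and weighted by $\log p$, and the prime-counting asymptotic (\ref{PsiAsymPI}) replaces the trivial counting of an interval. First, dispose of the boundary case: if $|A\cap (L/9,8L/9)|<3\delta L/4$, then one of the two end intervals carries density at least $9\delta/8$ on length $\gg L$, producing $A'\subseteq[1,L']$ with $q=1$ and trivial inheritance. In the main case, set $\eta=c_0\delta$ for a small $c_0=c_0(\textnormal{\textbf{h}},\epsilon)>0$, $Q=\eta^{-(D'+\epsilon)}$, and $\gamma=\eta^{-(D'+\epsilon)}/L$; the goal reduces to proving the $L^2$ concentration statement that there exists $q\leq Q$ such that $\int_{\mathbf{M}'_q(\gamma)}|\widehat{f_A}(\alpha)|^2\,d\alpha$ is at least the right-hand side of the bound in Lemma \ref{L2I}, with the measure of the "expected mass" adjusted for prime weights. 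Given such a $q$, Lemma \ref{dinc} supplies an arithmetic progression of step $q$ with density increment; partitioning into subprogressions of step $\lambda(q)$ and invoking Proposition \ref{inh2} produces the desired $A'$ with inherited lack of structure.

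To establish the $L^2$ concentration, I would consider the prime-weighted correlation
\[
\sum_{x\in\Z,\ \mathbf{n}\in H} f_A(x)f_A(x+h(\mathbf{n}))\prod_{i=1}^{\ell} 1_{\Lambda^{d_i}_i}(n_i)\log\bigl(r^{d_i}_i+d_i n_i\bigr).
\]
The hypothesis $(A-A)\cap(\V_1^{d_1}(h_1^{d_1})+\cdots+\V_\ell^{d_\ell}(h_\ell^{d_\ell}))\subseteq\{0\}$ kills the $1_A\cdot 1_A$ term, so the analog of (\ref{neg}) shows the correlation is bounded above by $-2\delta|A\cap(L/9,8L/9)|\cdot W$, where $W$ is the total weight over $\mathbf{n}\in H$. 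By (\ref{PsiAsymPI}), applied to $\psi(\cdot, r^{d_i}_i, d_i)$ with $q_0\mid d_i$, and by (\ref{x/q0}), each factor in $W$ admits the lower bound $\gg M_i/(q_0\phi(d_i))$, so the total mass on the right is indeed of the expected order. Fourier expanding turns the correlation into $\int|\widehat{f_A}|^2 S(\alpha)\,d\alpha$ for an $\ell$-fold product $S(\alpha)=\prod_i \tilde{S}_i(\alpha)$ of prime-Weyl sums $\tilde{S}_i(\alpha)=\sum_{n\in\Lambda^{d_i}_i\cap[1,M_i]}\log(r^{d_i}_i+d_i n)e^{2\pi\i h^{d_i}_i(n)\alpha}$. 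A Vinogradov-type bound handles $|S|$ on the minor arcs $\mathfrak{m}(\gamma,Q)$, yielding $|S(\alpha)|\ll \delta M/16$, while the standard major-arc factorization of $\tilde{S}_i$ into a twisted Gauss sum times a smooth integral, combined with the traditional estimate (Lemma \ref{gauss2}) giving $\theta(h_i)=1/k_i$, produces $|S(\alpha)|\ll q^{-1/D'}M$ for $\alpha\in\mathbf{M}_q(\gamma)$, $q\leq Q$. Plancherel disposes of the minor arcs; Proposition \ref{rstrick} applied with $b(q)=q^{-1/D'}$ (when $D'\geq 1$) or direct summation (when $D'<1$) isolates a single $q$ with enough $L^2$ mass on $\mathbf{M}'_q(\gamma)$. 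The bookkeeping of the resulting increments in the three $D'$-regimes is identical to Section \ref{unr}, with the earmarked case $\ell=k_1=1$ requiring only the weaker increment $(1+c)\delta$ and hence reducing to Ruzsa–Sanders' analysis of $p\pm 1$.

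The main obstacle is navigating the exceptional Dirichlet character $\chi$ in (\ref{PsiAsymPI}), which pollutes both the expected-mass calculation and the major-arc factorization of $\tilde{S}_i$. The hypothesis $q_0\mid d_i$ is precisely calibrated so that the exceptional term $\chi(r^{d_i}_i)x^\rho/(\phi(d_i)\rho)$ has a fixed sign and (\ref{x/q0}) forces a genuine positive main term of order $M_i/(q_0\phi(d_i))$ rather than cancellation; the factor $q_0^\ell$ loss is absorbed by choosing $c_1=c_1(\textnormal{\textbf{k}})$ small enough that $q_0\leq\cQ^{10K}$ contributes only a tiny power of $N$. For the major-arc Weyl sum estimates, the exceptional character contributes an auxiliary twisted Gauss sum whose modulus is again controlled by the same $q^{-1/k_i}$ bound (up to an $O(q_0)$ factor absorbed into $\cQ$). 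Finally, the growth condition $M_i\geq N^{1/10k}$ required to invoke (\ref{PsiAsymPI}) follows from $L\geq\sqrt{N}$ and the assumed bounds on $d_i$, again provided $c_1$ is chosen sufficiently small relative to $\textbf{k}$.
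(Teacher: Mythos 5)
Your proposal is correct and follows essentially the same route as the paper: the paper likewise reduces Lemma \ref{main5} to an $L^2$ concentration statement (its Lemma \ref{PIL2}), proves that via the $\log$-weighted prime correlation, the Ruzsa--Sanders input (\ref{PsiAsymPI}) with (\ref{x/q0}) and the hypothesis $q_0\mid d_i$ to lower-bound the expected mass $\Psi$, major/minor arc estimates (\ref{WmajPI})--(\ref{WminPI}), Proposition \ref{rstrick}, and then Lemma \ref{dinc} plus Proposition \ref{inh2} for the increment and inheritance. The only cosmetic difference is that the paper normalizes by $\Psi$ rather than $M$ and obtains the restricted (incomplete) Gauss sum bound from Lemma \ref{gsPI} (whence the $C^{\omega(q)}$ factor driving the $\log^{-C}$ increment when $D'=1$, $\ell>1$) rather than directly from Lemma \ref{gauss2}, but your outline already accounts for both effects.
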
  

\subsection*{Proof that Lemma \ref{main5} implies Theorem \ref{main2} for $D'>2$ and $D'\leq 1$} Suppose $A\subseteq [1,N]$ with $|A|=\delta N$ and $$a-a'\neq \sum_{i=1}^{\ell} h_i(p_i)$$ for all distinct pairs $a,a'\in A$ and for all primes $p_1,\dots, p_{\ell}$ with $h_1(p_1),\dots, h_{\ell}(p_{\ell}) \neq 0$. In particular, this implies that $$(A-A)\cap \Big(\V_1^{1}(h^{1}_1)+\cdots+\V^{1}_{\ell}(h^{1}_{\ell})\Big)\subseteq \{0\}.$$ Partitioning $[1,N]$, the pigeonhole principle guarantees the existence of an arithmetic progression $$P=\{x+\ell \lambda(q_0) : 1\leq \ell \leq N_0\}\subseteq [1,N]$$ with $N_0\geq N/2\lambda(q_0)$ and $|A\cap P|\geq \delta N_0$. Defining $A_0\subseteq [1,N_0]$ by $$A_0=\{\ell \in [1,N_0] : x+\ell \lambda(q_0) \in A\},$$ we see that $|A_0|\geq \delta N_0$ and $$(A_0-A_0)\cap \Big(\V^{\tilde{\lambda}_1(q_0)}_1(h^{\tilde{\lambda}_1(q_0)}_1)+\cdots+ \V^{\tilde{\lambda}_{\ell}(q_0)}_{\ell}(h^{\tilde{\lambda}_{\ell}(q_0)}_{\ell})\Big)\subseteq \{0\}.$$
After this initial passage to a subprogression, Theorem \ref{main2} (for $D'>2$ and $D'\leq 1$) follows from Lemma \ref{main5} in a manner completely analogous to the deduction of Theorem \ref{main1} from Lemma \ref{mainit}.  \qed   

\noindent We establish Lemma \ref{main5} from the following analog of Lemma \ref{L2I}.
\begin{lemma}\label{PIL2} Suppose $A\subseteq [1,L]$ with $|A|=\delta L$ and $L\geq \sqrt{N}$, let $\eta=c_0\delta$ for a sufficiently small constant $c_0=c_0(\textnormal{\textbf{h}},\epsilon)>0$, and let $\gamma=\eta^{-(D'+\epsilon)}/L$. If $q_0\mid d_i$, $d_i/q_0, \delta^{-1} \leq \cQ$, $$(A-A)\cap \Big(\V_1^{d_1}(h^{d_1}_1)+\cdots+\V^{d_{\ell}}_{\ell}(h^{d_{\ell}}_{\ell})\Big)\subseteq \{0\},$$ and $|A\cap(L/9,8L/9)|\geq 3\delta L/4$, then there exists $q \leq \eta^{-(D'+\epsilon)}$ such that 
\begin{equation*} \int_{\mathbf{M}_q(\gamma)} |\widehat{f_A}(\alpha)|^2\textnormal{d}\alpha \gg_{\textnormal{\textbf{h}}, \epsilon} \begin{cases} \delta^{D'+1+2\epsilon}L &\text{if } D'>1  \\ \delta^{2}\log^{-C}(\delta^{-1}) L & \text{if }  D'=1, \ell>1 \\ \delta^2L & \text{if } D'<1 \text{ or } \ell=k_1=1 \end{cases} 
\end{equation*} 
for some $C=C(\textnormal{\textbf{k}})$.
\end{lemma}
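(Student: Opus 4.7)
The plan is to mirror the proof of Lemma \ref{L2I}, replacing the sieved exponential sums with $\log$-weighted prime sums. Define
$$T:=\sum_{x\in\Z}\sum_{\mathbf{n}\in H}f_A(x)\,f_A(x+h(\mathbf{n}))\prod_{i=1}^{\ell}\mathbf{1}[r^{d_i}_i+d_in_i\text{ prime}]\log(r^{d_i}_i+d_in_i),$$
and let $W$ denote the corresponding total weight $\sum_{\mathbf{n}\in H}\prod_i\mathbf{1}[\cdots]\log(\cdots)$. Every $h(\mathbf{n})$ with all weights nonzero lies in $\V^{d_1}_1(h^{d_1}_1)+\cdots+\V^{d_\ell}_\ell(h^{d_\ell}_\ell)$, so the no-differences hypothesis combined with $|A\cap(L/9,8L/9)|\geq 3\delta L/4$ and (\ref{Hbig2}) gives, exactly as in (\ref{neg}), $T\leq -\delta^2 LW/4$. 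Simultaneously, (\ref{PsiAsymPI}) and (\ref{x/q0}) (applied repeatedly in each variable) produce the matching lower bound $W\gg M/q_0^\ell\gg_{\textbf{h},\epsilon}M$, since our smallness choice of $c_1$ makes $q_0\leq\cQ^{10K}$ negligible compared to any fixed positive power of $\delta^{-1}\leq\cQ$.

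Orthogonality then converts $T$ into a Fourier integral, as in (\ref{orth}):
$$T=\int_{\T}|\widehat{f_A}(\alpha)|^2\,S(\alpha)\,d\alpha+O_{\textbf{h}}\!\left(LW/\min_iM_i\right),\quad S(\alpha):=\prod_{i=1}^{\ell}\sum_{n=1}^{M_i}\mathbf{1}[r^{d_i}_i+d_in\text{ prime}]\log(r^{d_i}_i+d_in)\,e^{2\pi ih^{d_i}_i(n)\alpha}.$$
Paralleling Appendix \ref{apxA}, two exponential-sum bounds are then needed. On each major arc $\mathbf{M}_{a/q}(\gamma)$ with $q\leq Q:=\eta^{-(D'+\epsilon)}$, the asymptotic (\ref{PsiAsymPI}) reduces $S_i$ to a log-weighted classical Gauss-sum shape, and Lemma \ref{gsPI} (with $\theta(h_i)=1/k_i$) yields $|S(\alpha)|\ll_{\textbf{h}}b(q)M$ with $b(q)=C^{\omega(q)}q^{-1/D'}$. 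On the minor arcs, a Vaughan-type identity combined with Weyl's inequality produces $|S(\alpha)|\leq\delta M/16$, provided $c_0$ is chosen small enough. Plancherel absorbs the minor-arc contribution into $\delta^2 LW/16$, leaving
$$\sum_{q=1}^Q b(q)\int_{\mathbf{M}_q(\gamma)}|\widehat{f_A}(\alpha)|^2\,d\alpha\gg_{\textbf{h}}\delta^2 L.$$
For $D'\geq 1$, Proposition \ref{rstrick} together with (\ref{bqavg}) (with $D$ replaced by $D'$) isolates a single $q\leq Q$ carrying the required $L^2$ mass; for $D'<1$, summability of $\sum_qb(q)$ makes the isolation automatic. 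The exceptional case $\ell=k_1=1$ reduces to the classical linear analysis of $p\pm 1$, where the $C^{\omega(q)}$ factor is absent and a single-pigeonhole extraction yields the stated constant gain rather than a $\log^{-C}$ loss.

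The hard part is the exceptional-zero contamination in (\ref{PsiAsymPI}): the term $-\chi(a)x^\rho/\phi(q)\rho$ could in principle deflate the main term down to a factor of $(1-\rho)$ in both the lower bound on $W$ and the major-arc asymptotic for $S_i$. The essential payoff of the hypothesis $q_0\mid d_i$ enforced by Lemma \ref{main5} is that $\chi(r^{d_i}_i+d_in)=\chi(r^{d_i}_i)$ is constant in $n$, so the Siegel character factors cleanly out of every $S_i$ and out of $W$; the bound (\ref{x/q0}) then guarantees a loss of no more than a factor of $q_0^\ell$, which the smallness of $c_1$ absorbs. This same $q_0$-persistence underlies the entire argument and is the structural reason the iteration in Lemma \ref{main5} replaces $d_i$ by $\tilde\lambda_i(q)d_i$ at each step while preserving divisibility by $q_0$.
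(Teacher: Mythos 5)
Your architecture matches the paper's: a $\log$-weighted prime count over $H$, the negativity/orthogonality step, major-arc asymptotics reduced to restricted Gauss sums (Lemma \ref{gsPI}), a Vinogradov/Vaughan-type minor-arc bound, and then Proposition \ref{rstrick} (or summability for $D'<1$) to extract a single denominator. But there is a genuine quantitative gap in how you normalize the exponential-sum estimates, and it traces back to your false claim that $W\gg_{\textbf{h},\epsilon}M$. By (\ref{PsiAsymPI}) and (\ref{x/q0}) one only gets $W\gg M/q_0^{\ell}$, and $q_0$ can be as large as $\cQ^{10K}=e^{10Kc_1\sqrt{\log N}}$, which is \emph{not} negligible compared to powers of $\delta^{-1}$ (indeed $\delta$ may be nearly constant while $q_0$ is enormous); no choice of $c_1$ absorbs this. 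The smallness of $c_1$ only controls the additive error $O(X\cQ^{-1000K^2})$ in (\ref{PsiAsymPI}), not the multiplicative deflation $1-\chi(a)x^{\rho-1}/\rho\asymp 1-\rho\gg 1/q_0$ coming from a possible exceptional zero.

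Concretely, with your estimates $|S(\alpha)|\leq\delta M/16$ on the minor arcs and the count lower bound $|T|\geq\delta^2LW/4$, the minor-arc contribution $\delta^2LM/16$ need not be dominated by $\delta^2LW/4$ when $W\asymp M/q_0^{\ell}\ll M$, so you cannot conclude that the major arcs carry mass $\gg\delta^2L$; and normalizing the major-arc bound by $M$ rather than by $W$ costs a further factor $W/M\geq\cQ^{-20K\ell}$ in the final inequality, which destroys the iteration. The correct resolution (and the paper's) is to carry the weighted prime count
$\Psi=\prod_{i=1}^{\ell}\frac{\phi(d_i)}{d_i}\psi(d_iM_i+r^{d_i}_i,r^{d_i}_i,d_i)$
through on \emph{both} sides: the main term is $\gg\delta^2L\Psi$, and the arc estimates are $|\S(\alpha)|\ll C^{\omega(q)}q^{-1/D'}(q/\phi(q))^{\ell}\Psi$ and $|\S(\alpha)|\leq\delta\Psi/8$, so that $\Psi$ cancels and no power of $q_0$ survives into the conclusion. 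The lower bound $\Psi\geq\cQ^{-20K\ell}M$ is still needed, but only to beat the additive error terms (the $O(LM\log L/\min M_i)$ from orthogonality and the $X/U+U^JX^{1-4^{-j}}$ savings from Lemma \ref{PImin}, applied with $U=(q_0\cQ)^2$); that is where the smallness of $c_1$ actually enters. You are right that $q_0\mid d_i$ makes $\chi(r^{d_i}_i+d_in)$ constant in $n$, which is what makes the asymptotic (\ref{PsiAsymPI}) usable at all, but that observation does not by itself restore the factor $q_0^{\ell}$ you discarded.
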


\noindent The deduction of Lemma \ref{main5} from Lemma \ref{PIL2} is effectively identical to the deduction of Lemma \ref{mainit} from Lemma \ref{L2I}, and our task for this section is now reduced to a proof of Lemma \ref{PIL2}.

\subsection*{Proof of Lemma \ref{PIL2}}
The proof of Lemma \ref{PIL2} is  analogous to that of Lemma \ref{L2I}. Suppose $A\subseteq [1,L]$ with $|A|=\delta L$, $L\geq \sqrt{N}$, $$(A-A)\cap \Big(\V_1^{d_1}(h^{d_1}_1)+\cdots+\V^{d_{\ell}}_{\ell}(h^{d_{\ell}}_{\ell})\Big)\subseteq \{0\},$$ $q_0\mid d_i$, and $d_i/q_0,\delta^{-1}\leq \cQ$. We make liberal use here of notation defined in Section \ref{prelim}, defining the relevant objects in terms of $L$, the size of the current ambient interval, as opposed to the previously fixed $N$. 

\noindent For $1\leq i \leq \ell$, we define functions $\nu_i$ on $\Z$ \begin{equation*} \nu_i(n)=\frac{\phi(d_i)}{d_i}\log(d_in+r_i^{d_i})1_{\Lambda_i^{d_i}}(n),
\end{equation*} and for $\n\in \Z^{\ell}$ we let $\nu(\n)=\prod_{i=1}^{\ell}\nu_i(n_i).$ 

\noindent Just as in the derivation of (\ref{mass}), we have by (\ref{symdif})  and orthogonality of characters that 
\begin{equation*} \int_0^1|\widehat{f_A}(\alpha)|^2|\S(\alpha)| \textnormal{d}\alpha \geq \Big| \sum_{\substack{x\in \Z \\ \n\in H}} f_A(x)f_A(x+h(\n))\nu(\n)\Big| +O_{\textbf{h}}\Big(\frac{LM\log L}{\min M_i}\Big) \end{equation*}
and hence 
\begin{equation}\label{massPI} \int_0^1|\widehat{f_A}(\alpha)|^2|\S(\alpha)| \textnormal{d}\alpha \geq\delta^2L\Psi/2+O_{\textbf{h}}\Big(\frac{LM\log L}{\min M_i}\Big), 
\end{equation} 
where  $$\S_i(\alpha)=\sum_{n=1}^{M_i}\nu_i(n)e^{2\pi i h^{d_i}_{i}(n)\alpha},$$ $$\S(\alpha)=\prod_{i=1}^{\ell}\S_i(\alpha), $$ and $$\Psi=\prod_{i=1}^{\ell}\frac{\phi(d_i)}{d_i}\psi(d_iM_i+r^i_{d_i}, r^i_{d_i}, d_i).$$  From (\ref{PsiAsymPI}) and (\ref{x/q0}), we know that 
\begin{equation} \label{m/q3K} \Psi \geq \prod_{i=1}^{\ell}c(1-\rho)M_i \geq (Cq_0)^{-\ell}M \geq \cQ^{-20K\ell}M,\end{equation} which combined with (\ref{massPI}) implies
\begin{equation}\label{massPI2}\int_0^1|\widehat{f_A}(\alpha)|^2|\S(\alpha)|\textnormal{d}\alpha \geq \delta^2L\Psi/4. 
\end{equation} 
We let $\eta=c_0\delta$ for a sufficiently small constant $c_0=c_0(\textbf{h},\epsilon)>0$, $Q=\eta^{-(D'+\epsilon)}$, and $\gamma=Q/L$. It then follows from various exponential sum estimates, observations of Lucier on auxiliary polynomials, and Theorem 4.1 of \cite{lipan} that if $\alpha \in \mathbf{M}_q(\gamma), \ q\leq Q$, then
\begin{equation} \label{WmajPI} |\S(\alpha)| \ll_{\textbf{h}} \begin{cases}  \Psi/\phi(q) & \text{if } \ell=k_1=1 \\C^{\omega(q)}q^{-1/D'}(q/\phi(q))^{\ell}\Psi & \text{else} \end{cases} ,
\end{equation} 
where $\omega(q)$ is the number of distinct prime factors of $q$ and $C=C(\textbf{k})$, and 
\begin{equation} \label{WminPI} |\S(\alpha)| \leq \delta \Psi/8 \quad \text{for all} \quad \alpha \in \mathfrak{m}(\gamma, Q),
\end{equation} provided we choose $c_0$ sufficiently small. We discuss these estimates in more detail in Appendix \ref{apxB}, and the remainder of the proof is completely analogous to that of Lemma \ref{L2I}. \qed


\section{Double Iteration Method: Theorem \ref{main1} for $1<D\leq 2$} \label{Sec3}

In this section, we apply an adapted version of a double iteration argument, developed by Pintz, Steiger, and Szemer\'edi \cite{PSS} and previously modified and streamlined in \cite{BPPS} and \cite{HLR}. A technical difference with the previous method is the necessity that both the space and frequency domains be discrete.

\subsection{Fourier analysis and the circle method on $\Z/N\Z$}
We identify  subsets of the interval $[1,N]$ with subsets of the finite group $\Z_N=\Z/N\Z$, on which we utilize the normalized discrete Fourier transform. Specifically, for a function $F: \Z_N \to \C$, we define $\widehat{F}: \Z_N \to \C$ by \begin{equation*} \widehat{F}(t) = \frac{1}{N} \sum_{x \in \Z_N} F(x)e^{-2 \pi ixt/N}. \end{equation*} 

\noindent We make the analogous definitions for the major and minor arcs on $\Z_N$, singling out the zero frequency rather than introducing the balanced function.

\begin{definition}\label{arcs} Given $N\in \N$ and $K,Q>0$, we define, for each $q\in \N$ and $a \in [1,q]$, 
\begin{equation*} \mathbf{M}_{a,q}(K)=  \left\{t \in \Z_N: \left|\frac{t}{N}-\frac{a}{q}\right| < \frac{K}{N} \right\} \text{ \ and \ } \mathbf{M}_q(K) = \bigcup_{(a,q)=1}\mathbf{M}_{a,q}(K)\setminus \{0\}. \end{equation*} 
We then define $\mathfrak{M}(K,Q)$, the \emph{major arcs}, by \begin{equation*} \mathfrak{M}(K,Q) = \bigcup_{q=1}^{Q} \mathbf{M}_q(K), \end{equation*} and $\mathfrak{m}(K,Q)$, the \emph{minor arcs}, by $\mathfrak{m}(K,Q) = \Z_N \setminus (\mathfrak{M}(K,Q) \cup \{0\})$. It is important to note that as long as $2KQ^2<N$, we have that $\mathbf{M}_{a,q} \cap \mathbf{M}_{b,r} = \emptyset$ whenever $a/q \neq b/r$, $q,r \leq Q$.
\end{definition} 

\noindent We note that the sets defined above certainly depend on $N$, despite its absence from the notation. In practice, $N$ should always be replaced with the size of the appropriate ambient group, often denoted in the intermediate stages of the iterations by $L$.

\subsection{Overview of the argument}  
In a manner essentially identical to our attainment of (\ref{majmassII}) in Section \ref{unr}, we begin by observing that if $(A-A)\cap \Big(I(h_1)+\cdots+I(h_{\ell})\Big)\subseteq \{0\}$ for a set $A\subseteq [1,N]$ and intersective polynomials $h_1,\dots, h_{\ell}\in \Z[x]$, one can apply the circle method and Weyl sum estimates to show that this unexpected behavior implies substantial $L^2$  mass of $\widehat{A}$ over nonzero frequencies near rationals with small denominator. 

\noindent At this point, the traditional method, which we employed in Section \ref{unr}, is to use the pigeonhole principle to conclude that there is one single denominator $q$ such that $\widehat{A}$ has $L^2$ concentration around  rationals with denominator $q$. From this information, one can conclude that $A$ has increased density on a long arithmetic progression with step size an appropriate multiple of $q$, leading to a new denser set with an inherited lack of structure  and continued iteration. 

\noindent Pintz, Steiger, and Szemer\'edi \cite{PSS}  observed that pigeonholing to obtain a single denominator $q$ is a potentially wasteful step. We follow their approach, observing the following dichotomy: 
\begin{quotation}
\noindent \textbf{Case 1.} There is a single denominator $q$ such that $\widehat{A}$ has extremely high $L^2$ concentration, greater than yielded by the pigeonhole principle, around rationals with denominator $q$. This leads to a very large density increment on a long arithmetic progression. 
\medskip
 
\noindent \textbf{Case 2.} The $L^2$ mass of $\widehat{A}$ on the major arcs is spread over many denominators. In this case, an iteration procedure using the ``combinatorics of rational numbers" can be employed to build a large collection of frequencies at which $\widehat{A}$ is large, then Plancherel's identity is applied to bound the density of $A$.
\end{quotation}

\noindent Philosophically, Case 1 provides more structural information about the original set $A$ than Case 2 does. The downside is that the density increment procedure yields a new set and potentially  a new polynomial, while the iteration in Case 2 leaves these objects fixed. 
With these cases in mind, we can now outline the argument, separated into two distinct phases. 
\begin{quotation}
\noindent \textbf{Phase 1} (The Outer Iteration): Given a set $A$ and intersective polynomials $h_1,\dots,h_{\ell}\in \Z[x]$, ordered and partitioned as in Section \ref{unr}, with $(A-A)\cap \Big(I(h_1)+\cdots+I(h_{\ell})\Big)\subseteq \{0\},$ we ask if the set falls into Case 1 or Case 2 described above. If it falls into Case 2, then we proceed to Phase 2. 
\medskip

\noindent
If it falls into Case 1, then the density increment procedure yields a new subset $A_1$ of a slightly smaller interval with significantly greater density, and  $$(A_1-A_1) \cap \Big(I(h^{\tilde{\lambda}_1(q)}_1)+\cdots+I(h^{\tilde{\lambda}_{\ell}(q)}_{\ell})\Big)\subseteq \{0\}.$$
We can then iterate this process as long as the resulting interval is not too small, and the dichotomy holds as long as the coefficients of the corresponding polynomials are not too large. We show that if the resulting sets remain in Case 1, and the process iterates until the interval shrinks down or the coefficients grow to the limit, then the density of the original set $A$ must have satisfied a bound stronger than the one purported in Theorem \ref{main1} for $1 <D \leq 2$. \medskip

\noindent
Contrapositively, we assume that the original density does not satisfy this stricter bound, and we conclude that one of the sets yielded by the density increment procedure must lie in a large interval, have no nonzero differences in $I(h^{d_1}_1)+\cdots+ I(h^{d_{\ell}}_{\ell})$ for reasonably small $d_1,\dots, d_{\ell}$, and fall into Case 2. We call that set $B\subseteq[1,L]$. 
\medskip
\end{quotation}

\noindent We now have a set $B \subseteq [1,L]$ with $(B-B)\cap \Big(I(h^{d_1}_1)+\cdots+ I(h^{d_{\ell}}_{\ell})\Big)\subseteq \{0\}$ which falls into Case 2, so we can adapt the strategy of \cite{PSS}, \cite{BPPS}, and \cite{HLR}. It is in this phase that we use that $D\leq 2$.

\begin{quotation} \textbf{Phase 2} (The Inner Iteration): We prove that given a frequency $s\in \Z_L$ with $s/L$ close to a rational $a/q$ such that $\widehat{B}(s)$ is large, there are lots of nonzero frequencies $t\in \Z_L$ with $t/L$ close to rationals $b/r$ such that $\widehat{B}(s+t)$ is almost as large. This intuitively indicates that a set $P$ of frequencies associated with large Fourier coefficients can be blown up to a much larger set $P'$ of frequencies associated with nearly as large Fourier coefficients. 
\medskip 

\noindent
The only obstruction to this intuition is the possibility that there are many pairs $(a/q,b/r)$ and $(a'/q',b'/r')$ with $a/q+b/r = a'/q'+b'/r'$. Observations made in \cite{PSS} and \cite{BPPS} on the combinatorics of rational numbers demonstrate that this potentially harmful phenomenon can not occur terribly often. 
\medskip

\noindent
Starting with the trivially large Fourier coefficient at $0$, this process is applied as long as certain parameters are not too large, and the number of iterations is ultimately limited by the growth of the divisor function. Once the iteration is exhausted, we use the resulting set of large Fourier coefficients and Plancherel's Identity to get the upper bound on the density of $B$, which is by construction larger than the density of the original set $A$, claimed in Theorem \ref{main1} for $1<D\leq 2$.
\end{quotation}

\subsection{Reduction to two key lemmas} 

For the remainder of this section, we fix intersective polynomials $h_1,\dots, h_{\ell} \in \Z[x]$, partitioned as indicated in Theorem \ref{main1}. Namely, $h_1,\dots,h_{\ell_1}$ are arbitrary intersective polynomials, $h_{\ell_1+1},\dots, h_{\ell_1+\ell_2}$ are nonconstant monomials, and $h_{\ell_1+\ell_2+1}, \dots, h_{\ell}$ are nonmonomials with no constant term. We let $$D=\Big(\sum_{i=1}^{\ell_1} k_i^{-1}+\ell_2/2+\sum_{i=\ell_1+\ell_2+1}^{\ell} r_i^{-1}\Big)^{-1},$$ where $k_i=\deg(h_i)$ and $r_i$ is the number of nonzero coefficients of $h_i$, and again we let $D'=\sum_{i=1}^{\ell} k_i^{-1}$.

\noindent Further, we let $\textbf{k}=(k_1,\dots,k_{\ell})$, $k=\prod_{i=1}^{\ell}k_i,$ and $\rho=2^{-10k}$.  We also fix a natural number $N$ and an arbitrary $\epsilon>0$ and let $\cQ=(\log N)^{\epsilon \log\log\log N}.$ We deduce Theorem \ref{main1} (for $1<D\leq 2$) from two key lemmas, corresponding to the two phases outlined in the overview, the first of which yields a set with substantial Fourier $L^2$ mass distributed over rationals with many small denominators. 

\begin{lemma} \label{outer} Suppose $A\subseteq [1,N]$ with $|A|=\delta N$ and $(A-A)\cap \Big(I(h_1)+\cdots+I(h_{\ell})\Big)\subseteq \{0\}$. If \begin{equation} \label{behrend} \delta \geq e^{-(\log N)^{\epsilon/4}},\end{equation} then there exists  $B \subseteq [1,L]$ and $d_1,\dots, d_{\ell} \leq N^{\rho/2}$ satisfying $L \geq N^{1-\rho}$, $|B|/L=\sigma \geq \delta$, and $$(B-B) \cap \Big(I(h^{d_1}_1)+\cdots+I(h^{d_{\ell}}_{\ell})\Big)\subseteq \{0\}.$$ Further, $B$ satisfies $|B\cap[1,L/2]|\geq \sigma L/3$ and  \begin{equation} \label{maxmass} \max_{q\leq \cQ}\sum_{t\in \mathbf{M}_q(\cQ)}  |\widehat{B}(t)|^2 \leq \sigma^2(\log N)^{-1+\epsilon}. \end{equation} 
\end{lemma}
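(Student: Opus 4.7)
The plan is to construct $B$ via an outer density increment iteration, stopping the first time both a nontrivial density-in-the-left-half condition and the Fourier concentration condition \eqref{maxmass} are simultaneously met. Set $A_0 = A$, $N_0 = N$, $\sigma_0 = \delta$, $d_i^0 = 1$. At each step $m$, given $A_m \subseteq [1,N_m]$ of density $\sigma_m$ together with auxiliary parameters $d_1^m, \dots, d_\ell^m$ and the inherited condition $(A_m - A_m) \cap (I(h_1^{d_1^m}) + \cdots + I(h_\ell^{d_\ell^m})) \subseteq \{0\}$, I branch according to a trichotomy. If $|A_m \cap [1, N_m/2]| < \sigma_m N_m/3$, then the right half has density at least $4\sigma_m/3$; translating to $[1, N_m/2]$ yields $A_{m+1}$ with $N_{m+1} = N_m/2$, $\sigma_{m+1} \geq 4\sigma_m/3$, $d_i^{m+1} = d_i^m$, and the difference condition trivially preserved. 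If instead \eqref{maxmass} holds for $A_m$ (with $\sigma_m$, $N_m$), I halt and output $B = A_m$, $L = N_m$, $\sigma = \sigma_m$, $d_i = d_i^m$.

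In the remaining case there exists $q_m \leq \cQ$ with $\sum_{t \in \mathbf{M}_{q_m}(\cQ)} |\widehat{A_m}(t)|^2 > \sigma_m^2(\log N)^{-1+\epsilon}$. Applying the natural $\Z_{N_m}$-analog of Lemma \ref{dinc} with $\theta = (\log N)^{-1+\epsilon}$ and $\gamma = \cQ/N_m$ yields an arithmetic progression of step $q_m$ and length $\gg N_m/(q_m \cQ)$ on which $A_m$ has density at least $\sigma_m(1 + c(\log N)^{-1+\epsilon})$. Partitioning into subprogressions of step $\lambda(q_m) \leq q_m^k \leq \cQ^k$ and rescaling (exactly as in the deduction of Lemma \ref{mainit} from Lemma \ref{L2I}), Proposition \ref{inh} produces $A_{m+1} \subseteq [1, N_{m+1}]$ with $N_{m+1} \gg N_m/\cQ^{k+1}$, density $\sigma_{m+1} \geq \sigma_m(1 + c(\log N)^{-1+\epsilon})$, updated parameters $d_i^{m+1} = \tilde{\lambda}_i(q_m) d_i^m \leq \cQ^k d_i^m$, and the inherited lack of structure for the new auxiliary polynomials.

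Let $m_0$, $m_1$ denote the total counts of the first and third branches. Since $\sigma_m \leq 1$, the product bound $(4/3)^{m_0}(1+c(\log N)^{-1+\epsilon})^{m_1} \leq \delta^{-1}$ combined with the hypothesis $\delta \geq e^{-(\log N)^{\epsilon/4}}$ forces $m_0 \ll (\log N)^{\epsilon/4}$ and $m_1 \ll (\log N)^{1-3\epsilon/4}$. At the same time, $N_{m_0+m_1} \geq N \cdot 2^{-m_0}\cQ^{-(k+1)m_1}$ and $d_i^{m_0+m_1} \leq \cQ^{km_1}$, and since $\log \cQ = \epsilon \log\log N \log\log\log N$ one checks that for $m_1$ in this range, $\cQ^{(k+1)m_1} \leq N^{o(1)}$, so the constraints $N_{m_0+m_1} \geq N^{1-\rho}$ and $d_i^{m_0+m_1} \leq N^{\rho/2}$ remain comfortably satisfied for every $N$ large enough. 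Consequently, the density-blowing branch must saturate before the room runs out, forcing termination in the halting branch with all claimed size and parameter bounds in place.

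The hardest part I expect is establishing the appropriate $\Z_{N_m}$-analog of Lemma \ref{dinc} and, more importantly, verifying that from the hypothesis $(A_m - A_m) \cap \bigl(I(h_1^{d_1^m}) + \cdots + I(h_\ell^{d_\ell^m})\bigr) \subseteq \{0\}$ together with the Weyl-and-Gauss-sum analysis of Section \ref{unr} (translated to the discrete Fourier setting), the failure of \eqref{maxmass} genuinely supplies an $L^2$ concentration of magnitude $\sigma_m^2(\log N)^{-1+\epsilon}$ at a single denominator $q_m \leq \cQ$, and not merely the naive pigeonhole output $\sigma_m^2/\cQ$. This requires careful tracking of the implied constants emerging from Appendix \ref{apxA} and the arithmetic of $\lambda$ and $\tilde{\lambda}_i$, and is the numerical engine that both certifies Case 1 and ensures the iteration count calibrates cleanly against the hypothesis $\delta \geq e^{-(\log N)^{\epsilon/4}}$.
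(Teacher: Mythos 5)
Your proposal is correct and follows essentially the same route as the paper: iterate the discrete $L^2$ density increment (Lemma \ref{dinczn} together with Proposition \ref{inh}), treat the left-half imbalance as an additional increment branch, and use \eqref{behrend} to bound the number of steps so that $L\geq N^{1-\rho}$ and $d_i\leq N^{\rho/2}$ remain in force at termination. One remark: the concern in your final paragraph is moot, since the failure of \eqref{maxmass} is \emph{by definition} the existence of a single $q\leq\cQ$ carrying $L^2$ mass at least $\sigma_m^2(\log N)^{-1+\epsilon}$ — no Weyl or Gauss sum analysis is needed in this phase, as that machinery only enters in Lemma \ref{itn}.
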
  

\noindent The second lemma corresponds to the iteration scheme in which a set of large Fourier coefficients from distinct major arcs is blown up in such a way that the relative growth of the size of the set is much greater than the relative loss of pointwise mass.

\begin{lemma} \label{itn} Suppose $B\subseteq [1,L]$ and $d_1\dots,d_{\ell}$ are as in the conclusion of Lemma \ref{outer}, let $B_1=B\cap[1,L/2]$, let $m=2(\min_{i=1}^{\ell}k_i)^2-1$, and suppose $\sigma \geq \cQ^{-1/m}$. If $D\leq 2$, then given $U,V,K \in \N$ with $\max \{U,V,K\}\leq \cQ^{1/m}$ and a set \begin{equation*} P \subseteq \left\{t \in \bigcup_{q=1}^{V} \mathbf{M}_q(K)\cup\{0\} : |\widehat{B_1}(t)|  \geq \frac{\sigma}{U} \right\} \end{equation*} satisfying \begin{equation} \label{dis}| P \cap \mathbf{M}_{a,q}(K)| \leq 1 \text{ \ whenever \ } q \leq V,\end{equation} there exist $U',V',K' \in \N$ with $\max\{U',V',K'\} \ll_{\textnormal{\textbf{k}}} (\max \{U,V,K\})^{m/2}\sigma^{-(2D+m)/2}$ and a set  \begin{equation} \label{P'1} P' \subseteq \left\{t \in \bigcup_{q=1}^{V'} \mathbf{M}_q(K')\cup\{0\} : |\widehat{B_1}(t)|  \geq \frac{\sigma}{U'} \right\}\end{equation} satisfying \begin{equation} \label{P'2} | P' \cap \mathbf{M}_{a,q}(K')| \leq 1 \text{ \ whenever \ } q\leq V'\end{equation} and \begin{equation} \label{P'3} \frac{|P'|}{(U')^2} \geq \frac{|P|}{U^2}(\log N)^{1-15\epsilon}.\end{equation}
\end{lemma}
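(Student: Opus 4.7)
The plan is to execute the Fourier-analytic blowup pioneered in \cite{PSS} and adapted in \cite{BPPS} and \cite{HLR}. For each $s \in P$, I will twist the $L^2$ argument of Section \ref{unr} by the character $e^{-2\pi \i sx/L}$ to force $\widehat{B_1}(s+\cdot)$ to carry substantial $L^2$ mass on small-denominator major arcs. A dyadic pigeonhole on denominators then converts this mass into many pointwise large coefficients $\widehat{B_1}(s+t_r)$, one per denominator $r$; collecting across $s \in P$ gives a candidate $P''$, which a combinatorics-of-rationals pruning reduces to the desired $P'$.

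Concretely, for each $s \in P$ I would consider
\[
E(s) = \sum_{x\in\Z_L}\sum_{\n\in\tilde H} 1_{B_1}(x)\, 1_B(x+h(\n))\, e^{-2\pi \i sx/L},
\]
where $h(\n)=h_1^{d_1}(n_1)+\cdots+h_\ell^{d_\ell}(n_\ell)$ and $\tilde H$ is the sieved Cartesian product from Section \ref{prelim}. Because $B$ has no nonzero differences in $I(h_1^{d_1})+\cdots+I(h_\ell^{d_\ell})$ and $h(\n) \neq 0$ on $\tilde H$, every summand vanishes and $E(s) = 0$. Plancherel expansion gives $E(s) = L^2\sum_t \widehat{B_1}(s+t)\overline{\widehat B(t)}\widehat\nu(t)$, and isolating the $t=0$ term yields
\[
\sum_{t \neq 0}\widehat{B_1}(s+t)\,\overline{\widehat B(t)}\,\widehat\nu(t)\;=\;-\,\sigma\,\widehat{B_1}(s)\,|\tilde H|/L.
\]
Writing $\widehat{\nu}(t) = L^{-1}S(t/L) + O(\text{sieve error})$ with $S$ the $\ell$-fold Weyl product from Section \ref{unr}, absorbing the minor-arc contribution via (\ref{SminII}), and applying Cauchy--Schwarz isolates the major-arc portion as
\[
\sum_{q\leq Q}b(q)\sum_{t\in\mathbf{M}_q(K')}|\widehat{B_1}(s+t)|^2 \;\gtrsim\; \sigma^2/U^2,
\]
where $b(q) \sim q^{-1/D}$ encodes the Gauss-sum savings from Appendix \ref{apxA} and $K', Q$ are chosen compatibly with $U$.

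Since $D \leq 2$ and $Q$ is a power of $\log N$, the reciprocal-weighted denominator sum diverges faster than $\log N$, and dyadic pigeonholing therefore produces, for each $s \in P$, at least $c(\log N)^{1-O(\epsilon)}$ denominators $r \leq V'$ together with frequencies $t_r \in \mathbf{M}_r(K')$ satisfying $|\widehat{B_1}(s+t_r)| \geq \sigma/U'$, where careful bookkeeping yields $\max\{U',V',K'\} \ll_{\textnormal{\textbf{k}}} (\max\{U,V,K\})^{m/2}\sigma^{-(2D+m)/2}$. Setting $P'' = \bigcup_{s \in P}\{s + t_r\}$, a priori $|P''| \geq |P|\cdot c(\log N)^{1-O(\epsilon)}$, but distinct pairs $(s,r),(s',r')$ may contribute to the same arc $\mathbf{M}_{a,q}(K')$. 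These collisions correspond to solutions of $s - s' \approx b'/r' - b/r$ with $s, s' \in P$ and $r, r' \leq V'$, and the combinatorics-of-rationals observation of \cite{PSS} and \cite{BPPS} bounds the collision count by a factor of $\cQ^{o(1)}$, which is dwarfed by the gain; a standard pruning then yields $P' \subseteq P''$ satisfying (\ref{P'1}), (\ref{P'2}), and (\ref{P'3}).

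The principal obstacle will be the pigeonhole extraction in the previous paragraph: converting the weighted $L^2$ mass bound into $(\log N)^{1 - 15\epsilon}$ many \emph{pointwise} lower bounds at distinct denominators, while simultaneously keeping the growth of $U', V', K'$ within the prescribed $(\max\{U,V,K\})^{m/2}\sigma^{-(2D+m)/2}$ budget. It is precisely here that the hypothesis $D \leq 2$ becomes indispensable: if $D > 2$ the series $\sum_q b(q)$ would be bounded, precluding any logarithmic blowup, whereas for $D \leq 2$ the weights $b(q) \sim q^{-1/D}$ are heavy enough to spread mass across a truly logarithmic number of denominators.
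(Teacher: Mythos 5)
Your opening moves (twisting the orthogonality relation by $e^{-2\pi \i sx/L}$, isolating $t=0$, killing the minor arcs) match the paper, but the engine you propose for the $(\log N)^{1-15\epsilon}$ gain is not the right one and would not run. First, the assertion that the weighted major-arc bound $\sum_q b(q)\sum_{t\in\mathbf{M}_q}|\widehat{B_1}(s+t)|^2\gtrsim \sigma^2/U$ forces $(\log N)^{1-O(\epsilon)}$ distinct denominators per $s$ is false: since $b(q)\leq 1$, all of that mass could sit on a single denominator, and the divergence of $\sum_q b(q)$ gives no lower bound on the number of denominators used. (Your accounting of where $D\leq 2$ enters is also off: $\sum_q q^{-1/D}$ diverges for every $D\geq 1$, not just $D\leq 2$. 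The actual use of $D\leq 2$ is that $q^{-1/D}\leq q^{-1/2}$ in (\ref{rijs}), so a dyadic class of denominators of size $2^i$ must contain $\gg 2^{i/2}=V_s^{1/2}$ arcs; the square of this factor in $|P_s|^2$ is exactly what cancels the $\tilde V$ loss in Lemma \ref{CR}.) The true source of the logarithmic gain is the non-concentration hypothesis (\ref{maxmass}) inherited from Lemma \ref{outer}, which enters through the bound $E\leq \tilde W^2(\log N)^{-1+\epsilon}$ on the collision parameter in Lemma \ref{CR}. To invoke it one must know that every arc contributing to some $P_s$ contains a frequency with $|\widehat B(t)|\geq \sigma/\tilde W$, which means the argument has to carry the \emph{product} $|\widehat B(t)|\,|\widehat{B_1}(s+t)|$ through the dyadic decomposition, tracking the sizes of both transforms separately. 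Your Cauchy--Schwarz step discards $|\widehat B(t)|$ at the outset, after which (\ref{maxmass}) can no longer be brought to bear; note also that an analogue of (\ref{maxmass}) for the shifted function $\widehat{B_1}(s+\cdot)$ is not available.

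Second, the ``careful bookkeeping'' for $U',V',K'$ cannot be done without an ingredient you omit entirely: the $m$-th moment estimate $\sum_t|T(t)|^m\leq C$ (Wooley, via (\ref{moment})), which together with H\"older lets one discard the frequencies where either transform is below $c_1\sigma^{(m+1)/2}U^{-m/2}$. This truncation is what confines the dyadic parameters $U_s,W_s$ to a range of size $O(\log\cQ)$ and what produces the exponents $m/2$ and $(2D+m)/2$ in the stated bound on $\max\{U',V',K'\}$; without it the pigeonholed value of $\sigma/U'$ has no lower bound and the parameter constraint fails. Finally, your description of the collision analysis as costing only ``$\cQ^{o(1)}$'' misreads Lemma \ref{CR}: its denominator contains $\tilde V E\tau^8(1+\log V)$, where $E$ is as large as $\tilde W^2(\log N)^{-1+\epsilon}$, and the argument closes only because the $\tilde W^2$ cancels against $|P_s|^2$ and the $(\log N)^{-1+\epsilon}$ flips into the desired gain.
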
 
 
\subsection{Proof of Theorem \ref{main1} for $1<D\leq 2$} \label{hardproof} In order to establish (\ref{PSSbound}), we can assume that \begin{equation*}\delta \geq (\log N)^{-\log\log\log\log N}. \end{equation*} Therefore, Lemma \ref{outer} produces a set $B$ of density $\sigma \geq \delta$ with the stipulated properties, and we set $P_0=\{0\}$, $U_0=3$, and $V_0=K_0=1$. Then, if $D\leq 2$, Lemma \ref{itn} yields, for each $n$, a set $P_n$ with parameters $U_n,V_n,K_n$ such that \begin{equation*}\max\{U_n,V_n,K_n\} \leq (\log N)^{(m/2)^{n+3}\log\log\log\log N} \end{equation*} and \begin{equation*}\frac{1}{\sigma} \geq \frac{1}{\sigma^2}\sum_{t\in P_n} |\widehat{B_1}(t)|^2 \geq \frac{|P_n|}{U_n^2} \gg (\log N)^{n(1-15\epsilon)}, \end{equation*} where the left-hand inequality comes from Plancherel's Identity, as long as $\max\{U_n,V_n,K_n\} \leq \cQ^{1/m}$. This holds with $n=(1-\epsilon)(\log\log\log\log N) /\log (m/2)$, as $(m/2)^{n+3} \leq (\log\log\log N)^{1-\epsilon/2}$, and desired case of Theorem \ref{main1} follows. 
\qed

\subsection{The Outer Iteration} We begin the first phase with the following discrete analog of Lemma \ref{dinc}, the proof of which is effectively identical, and versions of which can be found in \cite{Lucier} and \cite{thesis}.

\begin{lemma} \label{dinczn} Suppose $B\subseteq [1,L]$ with $|B|=\sigma L$. If $0<\theta \leq 1$ and
$$\sum_{t\in \mathbf{M}'_q(K)}|\widehat{B}(t)|^2 \geq \theta\sigma^2,$$ then there exists an arithmetic progression \begin{equation*}P=\{x+\ell q : 1\leq \ell \leq L'\}
\end{equation*} with $qL' \gg \min\{\theta, K^{-1}\}L$ and $|B\cap P|\geq \sigma(1+\theta/32)L'$. 
\end{lemma}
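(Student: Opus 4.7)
The plan is to run the standard Fej\'er-type density increment argument of S\'ark\"ozy and Lucier, adapted to the discrete Fourier setting on $\Z_L$. The strategy is to build an arithmetic progression $P_0$ of step $q$ whose normalized indicator has Fourier transform ``localized'' on the major arc $\mathbf{M}'_q(K)$, then convolve $1_B$ with translates of $P_0$ and exploit the hypothesized Fourier $L^2$ mass to exhibit a translate on which $B$ has the claimed density increment.

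First, I would set $L' = \lfloor c\min\{\theta, K^{-1}\}L/q\rfloor$ for a sufficiently small absolute constant $c>0$ and define $P_0 = \{jq : 1\leq j \leq L'\}$. A direct geometric series computation shows that for any $t \in \mathbf{M}'_q(K)$, say with $|t/L - a/q| < K/L$ for some $r\mid q$, the exponential $e^{-2\pi \i jqt/L}$ varies by at most a fixed constant over $j=1,\ldots,L'$, since $qL'K/L \ll 1$ by our choice of $L'$. Consequently $|\widehat{1_{P_0}}(t)| \gg |P_0|/L$ uniformly on $\mathbf{M}'_q(K)$.

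Second, I would introduce the counting function $N(x) = |B \cap (x+P_0)| = (1_B * 1_{-P_0})(x)$ and its balanced version $F(x) = N(x) - \sigma|P_0|$. Since $\widehat{F}(0)=0$ and $\widehat{F}(t) = L\widehat{B}(t)\overline{\widehat{1_{P_0}}(t)}$ for $t\neq 0$, Plancherel gives
\begin{equation*}
\frac{1}{L}\sum_x |F(x)|^2 = L^2 \sum_{t\neq 0}|\widehat{B}(t)|^2|\widehat{1_{P_0}}(t)|^2 \geq L^2 \cdot c'\frac{|P_0|^2}{L^2}\sum_{t\in\mathbf{M}'_q(K)}|\widehat{B}(t)|^2 \gg \theta\sigma^2|P_0|^2,
\end{equation*}
where in the second inequality I use the pointwise lower bound from the first step and then the hypothesis on the $L^2$ mass of $\widehat{B}$ over $\mathbf{M}'_q(K)$.

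Finally, I would argue by contradiction. Writing $F = F_+ - F_-$ with $F_\pm \geq 0$, the constraint $0\leq N(x)\leq |P_0|$ forces $F_- \leq \sigma|P_0|$ pointwise, and $\mathbb{E}_x F_+ = \mathbb{E}_x F_-$ since $\mathbb{E}_x F = 0$. If no translate satisfied $N(x)\geq(1+\theta/32)\sigma|P_0|$, then $F_+ \leq \theta\sigma|P_0|/32$ pointwise, giving $\mathbb{E}_x F_+^2 \leq (\theta\sigma|P_0|/32)\mathbb{E}_x F_+ \leq (\theta\sigma|P_0|/32)(\sigma|P_0|)$ and similarly $\mathbb{E}_x F_-^2 \leq \sigma|P_0|\cdot \mathbb{E}_x F_- \leq \theta\sigma^2|P_0|^2/32$. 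Summing contradicts the lower bound on $\mathbb{E}_x|F|^2$ provided the absolute constants were chosen appropriately. Hence some translate yields the progression $P = x + P_0$ of step $q$ and length $L' \gg \min\{\theta,K^{-1}\}L/q$ with the required density. The main technical point, though routine, is matching the constants in the Fourier lower bound to the explicit $1/32$ in the density increment, which forces the initial absolute constant $c$ in the definition of $L'$ to be chosen small enough.
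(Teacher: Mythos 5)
Your argument is the standard Fej\'er-kernel/translate-averaging proof; the paper does not actually write out a proof of this lemma (it declares it ``effectively identical'' to Lemma 4.3, which is itself cited from \cite{thesis}, \cite{Lucier}, \cite{Ruz}), and your three steps are exactly the cited argument: the pointwise bound $|\widehat{1_{P_0}}(t)|\gg |P_0|/L$ on $\mathbf{M}'_q(K)$ (valid since $r\mid q$ kills the rational phase and $qL'K/L\leq c$ controls the rest), the Plancherel transfer giving $\frac{1}{L}\sum_x|F(x)|^2\gg\theta\sigma^2|P_0|^2$, and the $F_+/F_-$ variance computation, whose constants do close up once $c$ is small enough that the implied constant in the lower bound exceeds $1/16$.

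The one point you leave unaddressed is that your translates $x+P_0$ live in $\Z_L$, whereas the conclusion (and its use in Lemma 6.6 via Proposition 2.1) requires a genuine arithmetic progression in $\Z$. For the roughly $qL'$ values of $x$ for which $x+P_0$ wraps around $0$ modulo $L$, the set $x+P_0$ splits into two honest progressions, and the contradiction argument as written could in principle only produce such an $x$; moreover the na\"ive repairs (discarding wrapping $x$, or pigeonholing between the two pieces) either cost a factor of $\sigma^2$ against the lower bound or can hand you the short piece, so the fix is not entirely free. The clean resolutions are either to run the averaging over $\Z$ with the transform on $\T$ (which is precisely why Lemma 4.3 is stated in the continuous setting), comparing $|B\cap(x+P_0)|$ with $\sigma|[1,L]\cap(x+P_0)|$ so that edge effects are absorbed into the balanced function, or to exploit that in every application the set is first confined to an initial segment of $\Z_L$ so that no translate of interest wraps. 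This is a repairable technicality rather than a defect of the method, but as written the existence of the progression $P=\{x+\ell q:1\leq\ell\leq L'\}$ is not quite delivered.
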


\noindent Lemma \ref{dinczn} and Proposition \ref{inh} immediately combine to yield the following iteration lemma, corresponding to Case 1 discussed in the overview, from which we deduce Lemma \ref{outer}.

\begin{lemma} \label{inc} Suppose $B\subseteq [1,L]$ with $|B|=\sigma L$ and $(B-B) \cap \Big(I(h^{d_1}_1)+\cdots+I(h^{d_{\ell}}_{\ell})\Big)\subseteq \{0\}$. If  
\begin{equation}\label{Bmass} \sum_{t\in \mathbf{M}_q(\cQ)} |\widehat{B}(t)|^2 \geq \sigma^2(\log N)^{-1+\epsilon},\end{equation}
 for some $q \leq \cQ$, then there exists $B'\subseteq [1,L']$ satisfying $L' \gg L/\cQ^{k+1},$ $$(B'-B')\cap  \Big(I(h^{\tilde{\lambda}_1(q)d_1}_1)+\cdots+ I(h^{\tilde{\lambda}_{\ell}(q)d_{\ell}}_{\ell})\Big)\subseteq \{0\},$$ and
\begin{equation*} 
|B'|/L'\geq  \sigma(1+(\log N)^{-1+\epsilon}/32).\end{equation*}

\end{lemma}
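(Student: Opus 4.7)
\textbf{Proof proposal for Lemma \ref{inc}.} The plan is to feed hypothesis (\ref{Bmass}) directly into the discrete $L^2$ density increment result Lemma \ref{dinczn}, then pigeonhole down to a subprogression whose step is the specific modulus $\lambda(q)$ required by the inheritance Proposition \ref{inh}, and finally rescale. Specifically, apply Lemma \ref{dinczn} with $K=\cQ$ and $\theta=(\log N)^{-1+\epsilon}$, producing an arithmetic progression
\[
P=\{x+\ell q : 1\leq \ell \leq L''\}
\]
with $qL'' \gg \min\{\theta,\cQ^{-1}\}L$ and $|B\cap P|\geq \sigma(1+\theta/32)L''$. Since $\cQ=(\log N)^{\epsilon \log\log\log N}$ eventually dominates $\theta^{-1}=(\log N)^{1-\epsilon}$, the minimum is $\cQ^{-1}$ (and in any event $qL''\gg L/\cQ$), so $L''\gg L/(q\cQ)$.

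Next, partition $P$ into subprogressions of step $\lambda(q)$; since $\lambda(q)/q$ divides $\lambda(q)$ and $\lambda(q)\leq q^k$, each such subprogression has length at least $qL''/\lambda(q) \geq L''/q^{k-1}$. The pigeonhole principle produces one such subprogression
\[
P'=\{y+\ell\lambda(q) : 1\leq \ell \leq L'\}\subseteq P,
\]
with $L'\gg L/(q^k\cQ) \gg L/\cQ^{k+1}$ (using $q\leq \cQ$) and $|B\cap P'|/L' \geq |B\cap P|/L'' \geq \sigma(1+(\log N)^{-1+\epsilon}/32)$. Rescale by setting
\[
B'=\{\ell \in [1,L'] : y+\ell\lambda(q)\in B\};
\]
then $|B'|=|B\cap P'|$ and $B'\subseteq[1,L']$ has the required density increment and interval length.

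Finally, the intersective structure transfers via Proposition \ref{inh}: since $B'\subseteq\{\ell \in \N : y+\lambda(q)\ell \in B\}$ and the hypothesis places $(B-B)\cap(I(h^{d_1}_1)+\cdots+I(h^{d_\ell}_\ell))\subseteq\{0\}$, the conclusion of that proposition yields
\[
(B'-B')\cap\Big(I(h^{\tilde{\lambda}_1(q)d_1}_1)+\cdots+I(h^{\tilde{\lambda}_\ell(q)d_\ell}_\ell)\Big)\subseteq\{0\},
\]
which is exactly what is needed. All three conclusions of the lemma are now in hand, so the proof is complete.

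There is no substantive obstacle here: every piece is already packaged in the preceding lemmas. The only point that requires a little care is bookkeeping the length loss going from step $q$ to step $\lambda(q)$, where one must use the bound $\lambda(q)\leq q^k$ (coming from $\lambda_i(p)=p^{m_i}$ with $m_i\leq k_i$ extended multiplicatively and then composed over $i$) together with $q\leq\cQ$ to obtain the stated $L'\gg L/\cQ^{k+1}$.
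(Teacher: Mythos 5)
Your proposal is correct and is precisely the argument the paper intends: the paper states that Lemma \ref{dinczn} and Proposition \ref{inh} "immediately combine" to give Lemma \ref{inc}, and your write-up fills in that combination the same way the analogous deduction is carried out in the proof of Lemma \ref{mainit} (apply the $L^2$ increment with $\theta=(\log N)^{-1+\epsilon}$ and $K=\cQ$, pigeonhole onto a subprogression of step $\lambda(q)$ using $q\mid\lambda(q)$ and $\lambda(q)\leq q^k$, rescale, and invoke the inheritance proposition). The bookkeeping $L'\gg qL''/\lambda(q)\gg L/(q^k\cQ)\gg L/\cQ^{k+1}$ is exactly right.
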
 

\subsection*{Proof of Lemma \ref{outer}}

Setting $A_0$=$A$, $N_0=N$, $\delta_0=\delta$, and $d_1^0,\dots,d_{\ell}^0=1$, we iteratively apply Lemma \ref{inc}. This yields, for each $j$, a set $A_j \subseteq [1,N_j]$ with $|A_j|=\delta_jN_j$ and $$(A_j-A_j)\cap \Big(I(h^{d^j_1}_1)+\cdots+I(h^{d^j_{\ell}}_{\ell})\Big)\subseteq \{0\},$$satisfying
\begin{equation} \label{Ndeld} N_j \geq N/(C\cQ)^{(k+1)j}, \quad \delta_j \geq \delta_{j-1}(1+(\log N)^{-1+\epsilon}/32),  \quad d_j \leq \cQ^{kj},
\end{equation}
where $C$ is an absolute constant, as long as either
\begin{equation}\label{newmass}\max_{q\leq \cQ}\sum_{t\in \mathbf{M}_q(\cQ)} |\widehat{A_j}(t)|^2\geq \delta_j^2(\log N)^{-1+\epsilon}
\end{equation}
or $|A_j\cap[1,N_j/2]|<\delta_jN_j/3$, as the latter condition implies $A_j$ has density at least $3\delta_j/2$ on the interval $(N_j/2,N_j]$. We see that by (\ref{behrend}) and (\ref{Ndeld}), the density $\delta_j$ will exceed 1 after \begin{equation*}64\log(\delta^{-1})(\log N)^{1-\epsilon} \leq (\log N)^{1-\epsilon/2}\end{equation*} steps, hence  (\ref{newmass}) fails and $|A_j\cap[1,N_j/2]|\geq \delta_jN_j/3$ for some 
\begin{equation} \label{kbound} j\leq (\log N)^{1-\epsilon/2}.\end{equation}
However, we see that (\ref{behrend}), (\ref{Ndeld}), and (\ref{kbound}) imply 
 \begin{equation*}N_j \geq N/(C\cQ)^{(k+1)(\log N)^{1-\epsilon/2}} \geq Ne^{-(\log N)^{1-\epsilon/4}} \geq N^{1-\rho}, \end{equation*}
so we set $B=A_j$, $L=N_j$, $\sigma = \delta_j$, and $d_i=d_i^j$ for $1\leq i \leq \ell$, and we see further that
\begin{equation*} d_i \leq \cQ^{k(\log N)^{1-\epsilon/2}}\leq e^{(\log N)^{1-\epsilon/4}} \leq N^{\rho/2}, \end{equation*} 
as required.
\qed

\noindent The task of this section is now reduced to a proof of Lemma \ref{itn}.

\subsection{The Inner Iteration: Proof of Lemma \ref{itn}} Let $B\subseteq [1,L]$ and $d_1,\dots,d_{\ell} \in \N$ be as in the conclusion of Lemma \ref{outer}, let $B_1=B\cap[1,L/2]$, let $m=2(\min\{k_i\})^2-1$, and suppose $\sigma =|B|/L \geq \cQ^{-1/m}$. For $1\leq i \leq \ell$, let $M_i=\lfloor (L/3\ell|b_{i}|)^{1/k_i}\rfloor$, where $b_{i}$ is the leading coefficient of $h^{d_i}_i$. Letting $$H_{i}=\begin{cases}\{n \in \N : 0<h^{d_i}_i(n)<L/3\ell\} & \text{if } b_i>0 \\\{n \in \N : -L/3\ell<h^{d_i}_i(n)<0\} & \text{if } b_i<0\end{cases},$$  we note that by (\ref{symBIG}) we have \begin{equation}\label{symdif2} |H_{i} \ \triangle  \ [1,M_i]| \ll_{h_i} 1.  \end{equation} for $1\leq i \leq \ell$. Suppose we have a set $P$ with parameters $U,V,K$ as specified in the hypotheses of Lemma \ref{itn}, fix an element $s \in P$, let $\eta=c_0\sigma/U$ for a sufficiently small constant $c_0=c_0(\textbf{k},\epsilon)>0$, let $W=\prod_{p\leq \eta^{-(D'+\epsilon)}}p$ where the product is taken over primes, and let $w=\prod_{p\leq \eta^{-(D'+\epsilon)}} (1-1/p).$ As in previous sections, we let $h(\n)=h^{d_1}_1(n_1)+\cdots+h^{d_{\ell}}_{\ell}(n_\ell)$, $Z=\{\n \in \N^{\ell} : n_i\in H_i, \  h(\n)=0\}$, $\tilde{H_i}=\{n\in H_i : (n,W)=1\}$,  and $\tilde{H}=\Big(H_1\times \cdots \times H_{\ell_1}\times \tilde{H}_{\ell_1+1} \cdots \times \tilde{H}_{\ell}\Big) \setminus Z$.

\noindent Since $(B-B) \cap \Big(I(h^{d_1}_1)+\cdots+I(h^{d_{\ell}}_{\ell})\Big)\subseteq \{0\}$, we see that there are no solutions to \begin{equation*}a-b\equiv h(\n) \mod{L}, \quad a\in B, \  b \in B_1, \  \n\in H. \end{equation*}  
Combined with (\ref{symdif2}) and the orthogonality of the characters, this implies
\begin{equation*} \sum_{t \in \Z_L} \widehat{B}(t)\bar{\widehat{B_1}(s+t)}T(t) =\frac{1}{w^{\ell_2+\ell_3}L^{\ell+1}}\sum_{\substack{x\in\Z_L \\ \n \in H}} h'(\n)B(x+h(\n))B_1(x)e^{2\pi i xs/L}= O_{\textbf{h}}((\min wM_i)^{-1}),\end{equation*} 
where $$h'(\n)=\prod_{i=1}^{\ell}(h_i^{d_i})'(n_i),$$ $$T_i(t)=\frac{1}{L}\sum_{n=1}^{M_i} (h_i^{d_i})'(n)e^{2\pi i h_i^{d_i}(n) t/L}, \quad \tilde{T}_i(t)=\frac{1}{wL}\sum_{\substack{n=1 \\ (n,W)=1}}^{M_i} (h_i^{d_i})'(n)e^{2\pi i h_i^{d_i}(n) t/L},$$ and $$T(t)=\prod_{i=1}^{\ell_1}T_i(t)\prod_{j=\ell_1+1}^{\ell}\tilde{T}_j(t),$$ which immediately yields  
\begin{align*}\sum_{t\in \Z_{L} \setminus \{0\}} |\widehat{B}(t)||\widehat{B_1}(s+t)||T(t)|& \geq \Big|\sum_{t\in \Z_L \setminus \{0\}} \widehat{B}(t)\bar{\widehat{B_1}(s+t)}T(t)\Big| \\ &= \widehat{B}(0)|\widehat{B_1}(s)|T(0) - O_\textbf{h}((\min wM_i)^{-1}).\end{align*} 
Therefore, since $|\widehat{B_1}(s)| \geq \sigma/U$, $T(0)\geq 1/(4\ell)^{\ell}$, and $\sigma^{-1}, U\leq \cQ$, we have that  
\begin{equation}\label{massW}\sum_{t\in \Z_L \setminus \{0\}} |\widehat{B}(t)||\widehat{B_1}(s+t)||T(t)| \geq \frac{\sigma^2}{(5\ell)^{\ell}U}.
\end{equation}  It follows from traditional Weyl sum estimates and Lemmas 11 and 28 of \cite{Lucier} that 
\begin{equation}\label{Wmin} |T(t)| \leq  \frac{\sigma}{(10\ell)^{\ell}U} \quad \text{for all } t \in \mathfrak{m}(\eta^{-1},\eta^{-(D+\epsilon)}), \end{equation} 
provided we choose $c_0$ sufficiently small, and \begin{equation} \label{Wmaj} |T(t)| \ll_{\textbf{h},\epsilon} C^{\omega(q)}q^{-1/D}\min\{1, (L|t/L-a/q|)^{-1} \} \end{equation} 
if $t \in \mathbf{M}_{a/q}(\eta^{-1}),$ $(a,q)=1$, and $q\leq \eta^{-(D+\epsilon)}$, where $\omega(q)$ is the number of distinct prime factors of $q$ and $C=C(\textbf{k})$. Estimates (\ref{Wmin}) and (\ref{Wmaj}) follow from the same ingredients as (\ref{SmajII}) and (\ref{SminII}), and are discussed further in Appendix \ref{apxA}.

\noindent We have by (\ref{Wmin}), Cauchy-Schwarz, and Plancherel's Identity that 
\begin{equation*} \sum_{t \in \mathfrak{m}(\eta^{-1},\eta^{-(D+\epsilon)}) } |\widehat{B}(t)||\widehat{B_1}(t)||T(t)|  \leq \frac{\sigma^2}{(10\ell)^{\ell}U},  \end{equation*} 
which together with (\ref{mass}) yields 
\begin{equation} \label{Mmass} \sum_{t \in \mathfrak{M}(\eta^{-1},\eta^{-(D+\epsilon)})  } |\widehat{B}(t)||\widehat{B_1}(t)||T(t)|  \geq \frac{\sigma^2}{(10\ell)^{\ell}U}.  \end{equation} 
We now wish to assert that we can ignore those frequencies in the major arcs at which the transform of $B$ or $B_1$ is particularly small. In order to make this precise, we first need to invoke a weighted version of known estimates on the higher moments of Weyl sums. Specifically, it follows from Theorem 1.1 of \cite{Wooley} that  
\begin{equation} \label{moment} \sum_{t \in \Z_L} |T(t)|^{m} \leq  C, \end{equation} 
where $C=C(m)$. Choosing a constant $0<c_1<((40\ell)^{\ell}C^{1/m})^{-m/2}$, where $C$ comes from (\ref{moment}), we define 
\begin{equation} \label{XY} X = \left\{ t\in \mathfrak{M}(\eta^{-1},\eta^{-(D+\epsilon)})  :\min\Big\{|\widehat{B}(t)|,|\widehat{B_1}(s+t)|\Big\} \leq c_1\sigma^{(m+1)/2}U^{-m/2}\right\}\end{equation}
and \begin{equation*}Y=\mathfrak{M}(\eta^{-1},\eta^{-(D+\epsilon)})\setminus X. \end{equation*} 
Using H\"{o}lder's Inequality to exploit the higher moment estimate on $T$, followed by Plancherel's Identity, we see that 
\begin{align*}\sum_{t \in X}|\widehat{B}(t)||\widehat{B_1}(s+t)||T(t)| &\leq \Big(\sum_{t \in X} |\widehat{B}(t)|^{\frac{m}{m-1}}|\widehat{B_1}(s+t)|^{\frac{m}{m-1}} \Big)^{\frac{m-1}{m}} \Big( \sum_{t \in \Z_L} |T(t)|^m \Big)^{\frac{1}{m}}\\ &\leq \frac{c_1^{2/m}\sigma^{\frac{m+1}{m}}}{U}\Big(\sum_{t \in \Z_L} \max\left\{|\widehat{B}(t)|^2,|\widehat{B_1}(s+t)|^2\right\}\Big)^{\frac{m-1}{m}}\cdot C^{1/m} \\ & \leq \frac{\sigma^{\frac{m+1}{m}}}{(40\ell)^{\ell}U}\Big(\sum_{t \in \Z_L} |\widehat{B}(t)|^2+|\widehat{B_1}(s+t)|^2\Big)^{\frac{m-1}{m}} \\ & \leq \frac{\sigma^2}{(20\ell)^{\ell}U},\end{align*}
and hence by (\ref{Mmass}) we have 
\begin{equation}\label{Ymass}  \sum_{t \in Y } |\widehat{B}(t)||\widehat{B_1}(s+t)||S(t)|  \geq \frac{\sigma^2}{(20\ell)^{\ell}U}. \end{equation} 
For $i,j,k \in \N$, we define 
\begin{equation*} \mathcal{R}_{i,j,k} = \left\{ a/q: (a,q)=1, \ 2^{i-1} \leq q \leq 2^i,  \text{ } \frac{\sigma}{2^j} \leq \max |\widehat{B}(t)| \leq \frac{\sigma}{2^{j-1}},\text{ } \frac{\sigma}{2^k} \leq \max |\widehat{B_1}(s+t)| \leq \frac{\sigma}{2^{k-1}} \right\}, \end{equation*}
where the maximums are taken over nonzero frequencies  $t\in \mathbf{M}_{a/q}(\eta^{-1})$. We see that we have 
\begin{equation}\label{rij}\sum_{a/q\in \mathcal{R}_{i,j,k}}\sum_{t\in \mathbf{M}_{a/q}(\eta^{-1})\setminus\{0\}} |\widehat{B}(t)||\widehat {B_1}(s+t)||T(t)| \ll |\mathcal{R}_{i,j,k}|\frac{\sigma^2}{2^{j}2^k}\max_{a/q \in \mathcal{R}_{i,j,k}}\sum_{t\in \mathbf{M}_{a/q}(\eta^{-1})} |T(t)|. \end{equation}
It follows from (\ref{Wmaj}), the bound $U,\sigma^{-1}\leq \cQ^{1/m}$, and the standard estimates $$\omega(q)\ll \log q/\log\log q, \quad q/\phi(q) \ll \log\log q,$$ that if $(a,q)=1$ and $q\leq \eta^{-(D+\epsilon)}$, then
\begin{align*}\sum_{t\in \mathbf{M}_{a/q}(\eta^{-1})} |T(t)| &\ll_{\textbf{k},\epsilon} C^{\omega(q)}(q/\phi(q))^{\ell_2+\ell_3}q^{-1/D}\log(\cQ) \\ & \ll_{\epsilon} q^{-1/D}(\log N)^{\epsilon},\end{align*} 
hence by (\ref{rij}), and the fact that $D\leq 2$, we have 
\begin{equation} \label{rijs}\sum_{a/q\in \mathcal{R}_{i,j,k}}\sum_{t\in \mathbf{M}_{a/q}(\eta^{-1})\setminus\{0\}} |\widehat{B}(t)||\widehat{B_1}(s+t)||T(t)| \ll_{\epsilon} |\mathcal{R}_{i,j,k}|\frac{\sigma^2}{2^{j}2^k}2^{-i/2}(\log N)^{\epsilon}. \end{equation}   

\noindent By our definitions, the sets $\mathcal{R}_{i,j,k}$ exhaust $Y$ by taking $1\leq 2^i \leq \eta^{-(D+\epsilon)}$ and $1 \leq 2^j,2^k \leq U^{m/2}/c_1\sigma^{(m-1)/2}$, a total search space of size $\ll_{\textbf{k},\epsilon} (\log \cQ)^3 $. Therefore, by (\ref{Ymass}) and (\ref{rijs}) there exist $i,j,k$ in the above range such that 
\begin{equation*}\frac{\sigma^2}{U(\log \cQ)^3} \ll_{\textbf{k},\epsilon}|\mathcal{R}_{i,j,k}|\frac{\sigma^2}{2^j2^k}2^{-i/2}(\log N)^{\epsilon}. \end{equation*}
In other words, we can set $V_s=2^{i}$, $W_s=2^j$, and $U_s=2^k$ and take an appropriate nonzero frequency from each of the pairwise disjoint major arcs specified by $\mathcal{R}_{i,j,k}$ to form a set 
\begin{equation*} P_s \subseteq \left\{t \in \bigcup_{q=V_s/2}^{V_s} \mathbf{M}_{q}(\eta^{-1}):  \text{ } |\widehat{B_1}(s+t)| \geq \frac{\sigma}{U_s} \right\} \end{equation*} 
which satisfies 
\begin{equation}\label{Ps} |P_s| \gg_{\epsilon} \frac{U_sW_sV_s^{1/2}}{U(\log N)^{2\epsilon}},  \quad |P_s\cap\mathbf{M}_{a,q}(\eta^{-1})|\leq 1 \text{ \ whenever \ } q \leq V_s,  \end{equation} and 
\begin{equation}\label{maxBW} \max_{ t \in \mathbf{M}_{a/q}(\eta^{-1})\setminus \{0\}} |\widehat{B}(t)| \geq \frac{\sigma}{W_s} \text{ whenever } q\leq V_s \text{ and }  \mathbf{M}_{a/q}(\eta^{-1})\cap P_s \neq \emptyset,
\end{equation} 

\noindent noting by disjointness that $a/q \in \mathcal{R}_{i,j,k}$  whenever $q\leq V_s$ and  $\mathbf{M}_{a/q}(\eta^{-1})\cap P_s \neq \emptyset $. 

\noindent We now observe that by the pigeonholing there is a subset $\tilde{P} \subseteq P$ with $|\tilde{P}| \gg_{\textbf{k},\epsilon} |P|/(\log \cQ)^3$, and hence
\begin{equation}\label{tilde}|\tilde{P}| \gg_{\epsilon} |P|/(\log N)^{\epsilon},\end{equation}  
for which the triple $U_s,W_s,V_s$ is the same. We call  those common parameters $\tilde{U},\tilde{W}$ and $\tilde{V}$, respectively, and we can now foreshadow by asserting that the claimed parameters in the conclusion of Lemma \ref{itn} will be $U'=\tilde{U}$, $V'=\tilde{V}V$, and $K'=K+\eta^{-1}$, which do satisfy the purported bound. 

\noindent We let 
\begin{equation*} \mathcal{R} = \left\{\frac{a}{q}+\frac{b}{r}:  s\in \mathbf{M}_{a/q}(K) \text{ for some } s \in \tilde{P} \text{ and }  t \in \mathbf{M}_{b/r}(\eta^{-1}) \text{ for some } t \in P_s \right\}. \end{equation*}

\noindent By taking one frequency $s+t$ associated to each element in $\mathcal{R}$,  we form our set $P'$, which immediately satisfies conditions (\ref{P'1}) and (\ref{P'2}) from the conclusion of Lemma \ref{itn}. However, the crucial condition (\ref{P'3}) on $|P'|$, which by construction is equal to $|\mathcal{R}|$, remains to be shown. To this end, we invoke the work on the combinatorics of rational numbers found in \cite{PSS} and \cite{BPPS}.

\begin{lemma}[Lemma CR of \cite{BPPS}]\label{CR} \begin{equation*} |\mathcal{R}|\geq \frac{|\tilde{P}|(\min_{s\in \tilde{P}}|P_s|)^2}{\tilde{V}E\tau^8(1+\log V)},  \end{equation*} where \begin{equation*} E= \max_{r\leq \tilde{V}} \Bigl|\Bigl\{ b  : \ (b,r)=1, \  \mathbf{M}_{b/r}(\eta^{-1})\cap \bigcup_{s\in \tilde{P}} P_s\neq \emptyset \Bigr\}\Bigr|, \end{equation*} $\tau(q)$ is the divisor function and $\tau = \max_{q\leq V\tilde{V}} \tau(q)$.
\end{lemma}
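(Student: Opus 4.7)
The plan is to run a Cauchy--Schwarz (additive energy) argument on the representation function of the sum set $\mathcal{R}$. First observe that by the disjointness hypothesis on $P$ (namely $|P\cap\mathbf{M}_{a,q}(K)|\le 1$ for $q\le V$) and the analogous property in (\ref{Ps}), each nonzero $s\in\tilde{P}$ canonically determines the unique reduced fraction $a_s/q_s$ with $s\in\mathbf{M}_{a_s,q_s}(K)$ and $q_s\le V$, and each $t\in P_s$ determines the unique reduced fraction $b_t/r_t$ with $t\in\mathbf{M}_{b_t,r_t}(\eta^{-1})$ and $r_t\in[\tilde{V}/2,\tilde{V}]$. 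Define
\[ N(\rho):=\big|\big\{(s,t):s\in\tilde{P},\,t\in P_s,\,a_s/q_s+b_t/r_t=\rho\big\}\big|, \]
so that $\sum_\rho N(\rho)=\sum_{s\in\tilde{P}}|P_s|\ge|\tilde{P}|\min_s|P_s|$ and $|\mathcal{R}|$ is the support size of $N$. Cauchy--Schwarz then gives $|\mathcal{R}|\ge\big(\sum_\rho N(\rho)\big)^2/\sum_\rho N(\rho)^2$, reducing the problem to an upper bound on the energy $Q:=\sum_\rho N(\rho)^2$, which equals the count of quadruples $(s_1,t_1,s_2,t_2)$ satisfying $a_{s_1}/q_{s_1}+b_{t_1}/r_{t_1}=a_{s_2}/q_{s_2}+b_{t_2}/r_{t_2}$.

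The heart of the argument is the combinatorics of rational numbers developed in \cite{PSS} and \cite{BPPS}. Rewriting the quadruple condition as $\delta:=a_{s_2}/q_{s_2}-a_{s_1}/q_{s_1}=b_{t_1}/r_{t_1}-b_{t_2}/r_{t_2}$, the key input is that for each fixed nonzero reduced $\delta=u/v$ the number of representations $\delta=a_1/q_1-a_2/q_2$ with reduced $a_i/q_i$ and $q_i\le V$ is bounded by $\tau(v)^{O(1)}\le\tau^{O(1)}$, because $v\mid\mathrm{lcm}(q_1,q_2)$ and the further divisibility structure forces $(q_1,q_2)$ into a $\tau$-controlled family; an identical divisor bound holds for representations $\delta=b_1/r_1-b_2/r_2$ with $r_i\le\tilde{V}$. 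Combining the two bounds (one on each side of the equation) accounts for the power $\tau^8$ in the denominator of the lemma.

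A dyadic pigeonhole on $|P_s|$ produces a subset $\tilde{P}^*\subseteq\tilde{P}$ with $|\tilde{P}^*|\gg|\tilde{P}|/(1+\log V)$ and $|P_s|$ lying in a common dyadic window, which accounts for the $(1+\log V)$ factor. The role of $\tilde{V}$ and $E$ enters through the observation that the total number of distinct reduced fractions $b/r$ appearing in $\bigcup_{s\in\tilde{P}}P_s$ is at most $\tilde{V}E$, a bound that is fed in when summing the per-$\delta$ counts from the previous step. The diagonal $\delta=0$ contribution is exactly $\sum_{s\in\tilde{P}^*}|P_s|$, while the off-diagonal contribution is controlled by $\tilde{V}E\,\tau^8$ times a product of counts in the two variables. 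Assembling these estimates via Cauchy--Schwarz, restricted to $\tilde{P}^*$, yields the claimed lower bound on $|\mathcal{R}|$.

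The main obstacle is the combinatorics-of-rationals divisor bound itself: namely, controlling the number of pairs of reduced fractions with bounded denominators that realize a prescribed difference. The argument hinges on the identity $a_1 q_2-a_2 q_1=(u/v)q_1 q_2$ together with $v\mid\mathrm{lcm}(q_1,q_2)$; one writes $q_i=v_i d_i$ with $v_1 v_2/\gcd(v_1,v_2)=v$ and bounds the residual parameters $d_i$ by divisor counts, invoking the Chinese remainder structure and coprimality of the $a_i,q_i$. Two independent applications of this counting — one for the $a/q$-side and one for the $b/r$-side — absorb the $\tau^8$ factor, and the coupling of the two sides through $\delta$ is precisely what converts the seemingly wasteful energy inequality into a linear gain in $|\tilde{P}|$ over the trivial lower bound $|\mathcal{R}|\ge\min_s|P_s|$.
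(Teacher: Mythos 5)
First, a point of order: the paper does not prove this statement at all --- it is imported verbatim as ``Lemma CR of \cite{BPPS}'', so your reconstruction is being measured against the argument in that reference rather than anything in the present text. Your overall architecture (Cauchy--Schwarz on the representation function $N(\rho)$, reducing to an energy bound on quadruples, plus a dyadic pigeonhole and the observation that at most $\tilde{V}E$ distinct fractions $b/r$ occur) is a reasonable skeleton. But the step you call ``the heart of the argument'' is false as stated, and the proof collapses there. You claim that for a fixed nonzero reduced $\delta=u/v$, the number of representations $\delta=a_1/q_1-a_2/q_2$ with $a_i/q_i$ reduced and $q_i\leq V$ is at most $\tau(v)^{O(1)}$. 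This is wildly untrue: take $\delta=1/2$ and, for each odd $m\leq V/2$, the pairs $a_1/(2m)$ and $a_2/m$ with $a_1=m+2a_2$, $1\leq a_2\leq m/2$, $(a_2,m)=1$; every such pair is reduced, has denominators at most $V$, and has difference exactly $1/2$, giving $\gg V^2$ representations against your claimed bound of $\tau(2)^{O(1)}$. The constraint $v\mid\mathrm{lcm}(q_1,q_2)$ only pins down the ``coprime parts'' $q_1/(q_1,q_2)$ and $q_2/(q_1,q_2)$ up to divisor ambiguity; the common factor $(q_1,q_2)$ and the numerators remain essentially free, which is exactly where the $V^2$ comes from. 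Since nothing in the hypotheses prevents the fractions $a_s/q_s$ attached to $\tilde{P}$ (or the $b_t/r_t$) from carrying additive structure, your energy $Q$ cannot be bounded by $|\tilde{P}|\,\tilde{V}E\,\tau^{O(1)}(1+\log V)$ by this route, and the Cauchy--Schwarz step then fails to deliver the stated lower bound on $|\mathcal{R}|$.

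The actual combinatorics of rationals in \cite{PSS} and \cite{BPPS} is a statement about the four-term collision $a/q+b/r=a'/q'+b'/r'$ exploiting the asymmetry of the two sides: writing $\rho=(r,r')$, $r=\rho\sigma$, $r'=\rho\sigma'$, the reduced denominator of $b/r-b'/r'$ is divisible by $\sigma\sigma'$, while the reduced denominator of $a'/q'-a/q$ divides $\mathrm{lcm}(q,q')\leq V^2$; hence $\sigma\sigma'\leq V^2$ and $r,r'$ are forced to share a very large common factor. Together with the fact that the new denominators $r$ lie in a single dyadic block $(\tilde{V}/2,\tilde{V}]$ and that at most $E$ numerators occur per denominator, this rigidity --- not a pointwise divisor bound on a two-fraction difference --- is what controls the collision count, and it is where the factors $\tilde{V}E$, $\tau^8$, and $1+\log V$ actually come from. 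As written, your proposal substitutes a false counting lemma for this mechanism, so there is a genuine gap.
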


\noindent It is a well-known fact of the divisor function that $\tau(n) \leq n^{1/\log\log n}$ for large $n$, and since $\eta^{-1}, V\tilde{V} \leq \cQ$, we have that $\tau \leq (\log N)^{\epsilon}$. 

\noindent 
We also have from (\ref{maxmass}) that 
\begin{equation*} \sigma^2(\log N)^{-1+\epsilon} \geq \max_{r \leq \cQ} \sum_{t \in \mathbf{M}_r(\cQ)} |\widehat{B}(t)|^2 \geq \max_{r \leq \tilde{V}} \sum_{t \in \mathbf{M}_r(\eta^{-1})} |\widehat{B}(t)|^2 \geq \frac{\sigma^2}{\tilde{W}^2}E,\end{equation*}
where the last inequality follows from (\ref{maxBW}),  and hence 
\begin{equation}\label{D} E \leq \tilde{W}^2(\log N)^{-1 + \epsilon}.\end{equation}

 \noindent Combining the estimates on $\tau$ and $E$ with (\ref{Ps}), (\ref{tilde}), and Lemma \ref{CR}, we have\begin{equation*}|P'| \gg_{\epsilon} \frac{|P|}{(\log N)^{\epsilon}} \frac{\tilde{U}^2\tilde{W}^2\tilde{V}}{U^2(\log N)^{4\epsilon}} \frac{(\log N)^{1-\epsilon}}{\tilde{V}\tilde{W}^2(\log N)^{9\epsilon}} = \tilde{U}^2\frac{|P|}{U^2}(\log N)^{1-15\epsilon}.\end{equation*} \\
Recalling that we set $U'=\tilde{U}$, the lemma follows.
\qed

\noindent \textit{Remark on Theorem \ref{main2} for $1<D'\leq 2$.} To avoid excessive redundancy, we omit the details of the double iteration method with prime inputs, which establishes the bounds in Theorem \ref{main2} for $1<D'\leq2$. All of the tools required to adapt the argument from unrestricted inputs to prime inputs are already on display in Section \ref{primesec}'s adaptation of S\'ark\"ozy's method. For a detailed treatment of the double iteration method with prime inputs in the single polynomial case, the interested reader may refer to Chapter 11 of \cite{thesis}. 
\appendix

\section{Exponential Sum Estimates} \label{apxA}

\noindent In this appendix, we either invoke or prove all exponential sum estimates necessary to establish the crucial major and minor arc upper bounds in Sections \ref{unr} and \ref{Sec3}, namely (\ref{SmajII}), (\ref{SminII}), (\ref{Wmin}), and (\ref{Wmaj}). The first two lemmas provide asymptotic formulae for the relevant Weyl sums near rationals with small denominator.

\begin{lemma}\label{SasymI} Suppose $g(x)=a_0+a_1x+\cdots+a_jx^j \in \Z[x]$, $X\geq1$, and let $J=|a_0|+\cdots+|a_j|$. If $a,q\in \N$ and $\alpha=a/q+\beta$, then $$\sum_{n=1}^Xe^{2\pi \textnormal{i} g(n)\alpha}=q^{-1}\Big(\sum_{s=0}^{q-1} e^{2\pi \textnormal{i}g(s)a/q}\Big)\int_0^Xe^{2\pi \textnormal{i}g(x)\beta}\textnormal{d}x + O(q(1+JX^j\beta)).$$
\end{lemma}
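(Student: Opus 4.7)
The plan is to split the sum into residue classes modulo $q$, factor out the $q$-periodic phase $e^{2\pi\textnormal{i}g(n)a/q}$, and then replace each resulting inner sum with the corresponding continuous integral via a standard derivative-based error bound.

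Concretely, I would write each $n\in[1,X]$ uniquely as $n=s+qm$ with $s\in\{0,1,\dots,q-1\}$. Because $g(s+qm)\equiv g(s)\pmod{q}$ for every integer $m$ (expand each monomial $a_k(s+qm)^k$ by the binomial theorem and note that all but the $s^k$ term carries a factor of $q$), the character $e^{2\pi\textnormal{i}g(n)a/q}$ depends only on $s$, and the sum factors as
$$\sum_{n=1}^X e^{2\pi\textnormal{i}g(n)\alpha}=\sum_{s=0}^{q-1}e^{2\pi\textnormal{i}g(s)a/q}\sum_{\substack{m\in\Z\\ 1\leq s+qm\leq X}}e^{2\pi\textnormal{i}g(s+qm)\beta}.$$

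For each fixed $s$, I would then compare the inner $m$-sum with $\int e^{2\pi\textnormal{i}g(s+qx)\beta}\,\textnormal{d}x$ over the real interval obtained by relaxing the constraint on $m$. The standard sum-versus-integral comparison bounds the error by $1+\int|F'(x)|\,\textnormal{d}x$ for $F(x)=e^{2\pi\textnormal{i}g(s+qx)\beta}$; since $|F'(x)|\leq 2\pi q|\beta||g'(s+qx)|$ and the crude bound $|g'(y)|\leq jJX^{j-1}$ holds on $[0,X]$ (using $X\geq 1$ and $kX^{k-1}\leq jX^{j-1}$ for $k\leq j$), the integrated derivative contribution is at most $O(JX^j\beta)$ per residue class, giving a per-class total error of $O(1+JX^j\beta)$. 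Changing variables $y=s+qx$ converts the integral into $q^{-1}\int_s^X e^{2\pi\textnormal{i}g(y)\beta}\,\textnormal{d}y$, which differs from the target $q^{-1}\int_0^X$ by $O(q^{-1})$, again absorbed into the error.

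Summing over the $q$ residue classes $s$ multiplies the per-class error by $q$, producing the claimed $O(q(1+JX^j\beta))$ remainder, and assembles the remaining terms into precisely $q^{-1}\bigl(\sum_{s=0}^{q-1}e^{2\pi\textnormal{i}g(s)a/q}\bigr)\int_0^X e^{2\pi\textnormal{i}g(y)\beta}\,\textnormal{d}y$. Nothing here is genuinely delicate; the only place warranting care is the bookkeeping on the integration endpoints when passing from $m$-sum to $y$-integral, and verifying that the derivative bound is correctly stated in terms of $J$ (the coefficient sum of $g$) rather than a separately named sum for $g'$. I do not anticipate any real obstacle—this is the routine asymptotic underpinning the Gauss-sum reductions on the major arcs used throughout Sections \ref{unr} and \ref{Sec3}.
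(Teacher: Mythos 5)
Your proof is correct. The paper reaches the same formula by a slightly different (though closely related) route: it first isolates the purely rational-phase sum and records the linear asymptotic $\sum_{n\le x} e^{2\pi \textnormal{i} g(n)a/q} = q^{-1}\bigl(\sum_{s=0}^{q-1}e^{2\pi \textnormal{i} g(s)a/q}\bigr)x + O(q)$, valid for every $x\ge 0$, as a consequence of the residue count $\#\{1\le n\le x: n\equiv s \bmod q\}=x/q+O(1)$; it then transfers the smooth factor $e^{2\pi \textnormal{i} g(n)\beta}$ onto this asymptotic by summation by parts followed by integration by parts, so that the $O(q)$ error is multiplied by the total variation $O(JX^{j}|\beta|)$ of $x\mapsto e^{2\pi \textnormal{i} g(x)\beta}$ on $[0,X]$. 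You instead decompose into residue classes at the outset and run a sum-versus-integral comparison within each class, paying $O(1+JX^{j}|\beta|)$ per class and summing over $q$ classes. The two arguments are two arrangements of the same bookkeeping --- in both, the final error is (number of classes, or the $O(q)$ equidistribution defect) times (a boundary term plus the total variation of the smooth oscillation) --- and both land on exactly the stated remainder, with the implied constant depending on $j$ as it implicitly does in the paper. Your version avoids the double partial summation/integration by parts; the paper's version packages the rational-phase asymptotic as a standalone display that it can reuse (e.g.\ in the proof of Lemma \ref{SasymW}, where the residue count is replaced by a sieve estimate --- your residue-class decomposition would adapt equally well there). One trivial point of care, present in the paper's statement as well: the error should be read with $|\beta|$, as you implicitly do.
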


\begin{proof} We begin by noting that for any $a,q \in \N$ and $x\geq0$,

\begin{equation}\label{rat2} \sum_{n=1}^x e^{2\pi \i g(n)a/q}=\sum_{s=0}^{q-1}\sum_{\substack{n=1 \\ n\equiv s \text{ mod }q}}e^{2\pi \i g(s)a/q}= q^{-1}\Big(\sum_{s=0}^{q-1}e^{2\pi \i g(s)a/q}\Big)x+ O(q),\end{equation}
\

\noindent since \begin{equation} \label{count} \# \{1\leq n \leq x : n\equiv s \text{ mod }q\}=x/q + O(1).\end{equation} 

\noindent Using (\ref{rat2}) and successive applications of summation and integration by parts, we have that if $\alpha=a/q+\beta$, then
\begin{align*}\sum_{n=1}^Xe^{2\pi \i g(n)\alpha} &= q^{-1}\Big(\sum_{s=0}^{q-1}e^{2\pi \i g(s)a/q}\Big)\Big(Xe^{2\pi \i g(X)\beta}-\int_0^Xx(2\pi \i \beta g'(x))e^{2\pi \i g(x)\beta}\textnormal{d}x\Big) + O(q(1+JX^{j}\beta))\\\\& =q^{-1}\Big(\sum_{s=0}^{q-1}e^{2\pi \i g(s)a/q}\Big)\int_0^Xe^{2\pi \textnormal{i} g(x)\beta}\textnormal{d}x + O(q(1+JX^j\beta)),
\end{align*} as required. 
\end{proof} 

\begin{lemma}\label{SasymW} Suppose $g(x)=a_0+a_1x+\cdots+a_jx^j \in \Z[x]$, $X\geq1$, $W=\prod_{p\leq Y} p$, and let $J=|a_0|+\cdots+|a_j|$. If $a,q\in \N$ and $\alpha=a/q+\beta$, then $$\sum_{\substack{n=1 \\ (n,W)=1}}^Xe^{2\pi \textnormal{i}g(n)\alpha}=q^{-1} \prod_{\substack{p\leq Y \\ p\nmid q}}\Big(1-\frac{1}{p}
\Big)\Big(\sum_{\substack{s=0 \\ \big((s,q),W\big)=1}}^{q-1}e^{2\pi \textnormal{i} g(s)a/q}\Big)\int_0^Xe^{2\pi\textnormal{i}g(x)\beta}\textnormal{d}x + O(Xe^{-\frac{\log(X/q)}{2\log Y}}(1+JX^j\beta)).$$
\end{lemma}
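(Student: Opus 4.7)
The plan is to derive the formula via Möbius inversion on the coprimality condition followed by application of Lemma \ref{SasymI} to the resulting arithmetic-progression sums. The identity $\sum_{d\mid (n,W)}\mu(d) = 1_{(n,W)=1}$ gives
\begin{equation*}
\sum_{\substack{n=1\\(n,W)=1}}^X e^{2\pi \i g(n)\alpha} = \sum_{d\mid W}\mu(d)\sum_{m=1}^{X/d} e^{2\pi \i g(dm)\alpha}.
\end{equation*}
For each $d$, the inner sum is a Weyl sum of the polynomial $g(dm)$ in $m$, which has degree $j$ and coefficient-sum at most $Jd^j$, so Lemma \ref{SasymI} applies; after substituting $y=dx$ in the resulting integral one obtains the uniform-in-$d$ asymptotic
\begin{equation*}
\sum_{m=1}^{X/d} e^{2\pi \i g(dm)\alpha} = \frac{c_d}{dq}\int_0^X e^{2\pi \i g(y)\beta}\,\textnormal{d}y + O\bigl(q(1+JX^j\beta)\bigr),\quad c_d := \sum_{s=0}^{q-1} e^{2\pi \i g(ds)a/q}.
\end{equation*}

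Next I would sum the main terms over $d\mid W$ and show they collapse to the claimed expression. Since $W$ is squarefree, each divisor $d$ of $W$ factors uniquely as $d=d_1d_2$ with $d_1\mid \gcd(W,q)$ and $(d_2,q)=1$; multiplication by $d_2$ permutes $\Z/q\Z$, so $c_d$ depends only on $d_1$, and as $s$ runs through $\Z/q\Z$, $d_1 s\bmod q$ covers each multiple of $d_1$ in $[0,q)$ exactly $d_1$ times, giving $c_d=d_1\sum_{0\leq t<q,\,d_1\mid t} e^{2\pi \i g(t)a/q}$. Plugging this into $\sum_{d\mid W}\mu(d)c_d/(dq)$ and separating variables, the sum over $d_2$ factors cleanly as $\prod_{p\leq Y,\,p\nmid q}(1-1/p)$, while the inner Möbius sum $\sum_{d_1\mid \gcd(W,q,t)}\mu(d_1)$ collapses to the indicator of $((t,q),W)=1$, yielding precisely the claimed main term.

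Bounding the error is the real difficulty, because summing the per-$d$ errors trivially gives $O(2^{\omega(W)}q(1+JX^j\beta))$, which is catastrophic since $\omega(W)=\pi(Y)$. The remedy is to truncate the Möbius sum at a parameter $D$: use the Lemma \ref{SasymI} asymptotic only for $d\leq D$ (contributing error $O(Dq(1+JX^j\beta))$) and bound the tails of both the sieved sum and the expected main term by $X/d$, using the trivial count for the former and the estimate $|c_d|\leq q$ for the latter. This reduces matters to controlling $X\sum_{d\mid W,\,d>D}1/d$, which I would attack by Rankin's trick: for $0<u<1$,
\begin{equation*}
\sum_{\substack{d\mid W\\d>D}}\frac{1}{d}\ \leq\ D^{-u}\prod_{p\leq Y}\bigl(1+p^{u-1}\bigr).
\end{equation*}
Choosing $u=1/(2\log Y)$ makes each $p^u\leq\sqrt{e}$, so by Mertens the product is $(\log Y)^{O(1)}$; taking $D$ comparable to $X/q$ balances the two pieces and produces the target $Xe^{-\log(X/q)/(2\log Y)}(1+JX^j\beta)$, up to a polylogarithmic factor in $Y$.

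The main obstacle is this final sieve-theoretic bookkeeping: matching the exponent in the error term exactly and verifying that the $(\log Y)^{O(1)}$ slack is absorbed cleanly. A cleaner alternative, which I would fall back on if the direct Rankin argument proves insufficient, is to invoke Brun's pure sieve or the Fundamental Lemma of Sieve Theory on the sieving region $d\mid W$, $d\leq D$, which produces truncation errors of exactly the stated shape $e^{-s}$ with $s=\log D/\log Y$ in one stroke.
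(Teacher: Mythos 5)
Your route is genuinely different from the paper's. The paper proves this lemma in two lines: it repeats the proof of Lemma \ref{SasymI} verbatim, but replaces the equidistribution count (\ref{count}) with the sieve estimate
$\#\{1\leq n\leq x: n\equiv s \ (q),\ (n,W)=1\}=\frac{x}{q}\prod_{p\leq Y,\,p\nmid q}(1-\frac1p)(1+O(e^{-\log(x/q)/2\log Y}))$ for $((s,q),W)=1$, quoted wholesale from Theorem 7.2 of Halberstam--Richert as used in \cite{BPPS}, and then does the same summation/integration by parts. You instead open up the coprimality condition by M\"obius inversion and apply Lemma \ref{SasymI} to each progression $d\mid n$. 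Your main-term computation is correct and is a nice self-contained explanation of where the restricted Gauss sum and the Euler product over $p\nmid q$ come from: the splitting $d=d_1d_2$ with $d_1\mid\mathrm{rad}(q)$, the identity $c_d=d_1\sum_{d_1\mid t}e^{2\pi\i g(t)a/q}$, and the collapse of $\sum_{d_1\mid\gcd(P,t)}\mu(d_1)$ to the indicator of $((t,q),W)=1$ all check out.

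The gap is in the error term, exactly where you feared. With truncation at $D$ your two error sources are $\ll Dq(1+JX^j\beta)$ (one application of Lemma \ref{SasymI} per $d\leq D$) and $\ll X\sum_{d\mid W,\,d>D}d^{-1}$. Taking $D\asymp X/q$ does \emph{not} balance them: it makes the first source $\asymp X(1+JX^j\beta)$, which is the trivial bound. Balancing $Dq$ against $XD^{-u}$ with $u=1/2\log Y$ forces $D=(X/q)^{1-\Theta(1/\log Y)}$, and then both pieces are of size $Xe^{-\frac{\log(X/q)}{2\log Y}(1-\Theta(1/\log Y))}(\log Y)^{O(1)}$ --- genuinely weaker than the stated error whenever $\log(X/q)\gg\log^2Y$, which is the regime the paper actually uses ($Y$ is a small power of $\delta^{-1}$ while $X/q$ is a power of $N$). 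Your fallback is also misstated: Brun's pure sieve truncates by $\omega(d)\leq r$, and the Bonferroni error is $\sum_{n\leq X}\binom{\omega((n,W))}{r+1}\ll X((\log\log Y+O(1))/r)^{r+1}e^{r+1}$, which only beats $e^{-r}$ once $r\gg\log\log Y$; it does not produce $e^{-s}$ with $s=\log D/\log Y$ ``in one stroke,'' and its truncation cost is again $\approx Y^rq$. The clean repair is precisely the paper's: apply the Fundamental Lemma to the \emph{counting} problem in each residue class $s$ mod $q$ (where the main term $\frac{x}{q}\prod(1-1/p)\gg x/q\log Y$ is large enough to absorb the truncation cost, and the superexponential decay $e^{-s\log s}$ of the sieve gives ample room), and only then pass to the exponential sum by partial summation. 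Your argument as written proves the lemma with $2\log Y$ replaced by $(2+o(1))\log Y$ in the exponent --- which, to be fair, would suffice for every application in this paper, but is not the statement as given.
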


\begin{proof} Lemma \ref{SasymW} follows by simply mimicking the proof of Lemma \ref{SasymI}, replacing (\ref{count}) with the fact that if $((s,q),W)=1$, then $$\#\{1\leq n \leq x : n\equiv s \text{ mod } q, \ (n,W)=1\}= \frac{x}{q}\prod_{\substack{p\leq Y \\ p\nmid q}}\Big(1-\frac{1}{p}
\Big)\Big(1+O(e^{-\frac{\log(x/q)}{2\log Y}})\Big),$$ which follows from Theorem 7.2 in \cite{HR} as exhibited in \cite{BPPS}, whereas otherwise this set is empty.
\end{proof}

\noindent As indicated by the asymptotic formulae in Lemmas \ref{SasymI} and \ref{SasymW}, we can beat the trivial bound on the Weyl sums near rationals with small denominator by invoking nontrivial estimates on the Gauss sums, which are provided by the next three lemmas.

\begin{lemma}[Lemma 6, \cite{Lucier}]  \label{gauss2}If $g(x)=a_0+a_1x+\cdots+a_jx^j \in \Z[x]$, $j\geq 1$,  then 
$$\Big|\sum_{s=0}^{q-1}e^{2\pi \textnormal{i} g(s)/q}\Big|\ll_j \gcd(\textnormal{cont}(g),q)^{1/j}q^{1-1/j},$$ where $$\textnormal{cont}(g)=\gcd(a_1,\dots,a_j).$$
\end{lemma}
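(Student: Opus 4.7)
The plan is to reduce the assertion to the standard Hua--Weyl estimate for polynomial Gauss sums with content coprime to the modulus, namely
\[
\Big|\sum_{s=0}^{Q-1} e^{2\pi\i G(s)/Q}\Big| \ll_j Q^{1-1/j} \quad \text{whenever } \gcd(\mathrm{cont}(G),Q)=1,
\]
for $G\in\Z[x]$ of degree $j$. Let $d=\gcd(\mathrm{cont}(g),q)$, write $a_i=d\tilde a_i$ for $1\leq i\leq j$, and set $\tilde g(x)=\tilde a_1 x+\cdots+\tilde a_j x^j$, so that $g(x)=a_0+d\tilde g(x)$. Pulling out the harmless constant phase $e^{2\pi\i a_0/q}$ and setting $q=dq'$, the substitution $s\mapsto s+q'$ shows that $\tilde g(s)\bmod q'$ is invariant, and so
\[
\sum_{s=0}^{q-1}e^{2\pi\i g(s)/q} = e^{2\pi\i a_0/q}\sum_{s=0}^{dq'-1} e^{2\pi\i \tilde g(s)/q'} = e^{2\pi\i a_0/q}\, d\sum_{s=0}^{q'-1} e^{2\pi\i \tilde g(s)/q'}.
\]
Since $d=\gcd(\mathrm{cont}(g),q)$, the standard property $\gcd(a/d,b/d)=1$ for $d=\gcd(a,b)$ gives $\gcd(\mathrm{cont}(\tilde g),q')=1$, placing us in the primitive case on modulus $q'=q/d$.

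Applied to $\tilde g$ on modulus $q'$, the Hua--Weyl estimate yields a bound of $\ll_j (q')^{1-1/j}=(q/d)^{1-1/j}$, and restoring the factor of $d$ from the collapse above produces
\[
\Big|\sum_{s=0}^{q-1}e^{2\pi\i g(s)/q}\Big| \ll_j d\cdot (q/d)^{1-1/j} = d^{1/j}q^{1-1/j},
\]
which is the asserted inequality. Thus the proof reduces cleanly to the primitive estimate.

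To establish the primitive estimate, I would first apply the Chinese Remainder Theorem to factor the Gauss sum across coprime parts of $Q$, reducing to the prime power case $Q=p^k$ (noting that primitivity is inherited by each prime power factor). For $p^k$, I would run Weyl's differencing/Hensel lifting argument: separate variables as $s=u+p^{\lceil k/2\rceil} v$, expand $G(s)$ in a Taylor-like fashion modulo $p^k$, and exploit the fact that the $v$-sum collapses to a sum over roots of $G'(u)\equiv 0\pmod{p^{\lfloor k/2\rfloor}}$. Primitivity of $\mathrm{cont}(G)$ ensures that $G'$ does not vanish identically modulo $p$, so the number of such critical residues is controlled by $\deg(G')=j-1$, yielding the claimed $Q^{1-1/j}$ savings after a routine induction on $k$. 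The only delicate point, and the main obstacle to writing the argument cleanly, is bookkeeping the Hensel lifting step when $p$ is small relative to $j$; here one either treats small primes trivially (absorbing them into the implicit constant, which is permitted since it may depend on $j$) or appeals directly to the $L^{2j}$ mean value bound for exponential sums, as formulated by Hua.
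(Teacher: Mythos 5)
Correct, and essentially the same route as the paper's source: the paper gives no proof of this lemma (it is quoted verbatim as Lemma 6 of \cite{Lucier}), and Lucier's argument is precisely your reduction --- extract $d=\gcd(\textnormal{cont}(g),q)$, collapse the complete sum by periodicity of $\tilde g$ modulo $q/d$ to pick up the factor $d=d^{1/j}\cdot d^{1-1/j}$, and invoke the classical Hua--Chen estimate $\ll_j Q^{1-1/j}$ in the primitive case $\gcd(\textnormal{cont}(\tilde g),Q)=1$. Your concluding sketch of the primitive estimate is only an outline (the sharp $Q^{1-1/j}$ without an $\epsilon$ requires Mordell/Hua's root-counting induction at prime powers), but that estimate is itself a standard cited result (cf.\ \cite{Chen}), so nothing essential is missing from the reduction.
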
 

\begin{lemma}[Lemma 2, \cite{BPPS}] \label{gaussB} If $(a,q)=1$, then $$\Big|\sum_{\substack{s=0 \\ (s,q)=1}}^{q-1}e^{2\pi \textnormal{i} s^ja/q}\Big| \ll C^{\omega(q)} q^{1/2},$$ where $C=C(j)$.
\end{lemma}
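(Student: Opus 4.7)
The plan is to reduce via the Chinese Remainder Theorem to the case of a prime power modulus, then bound each local sum using multiplicative characters. First, if $q = q_1 q_2$ with $\gcd(q_1, q_2) = 1$ and we decompose $a/q = a_1/q_1 + a_2/q_2$ with $(a_i, q_i) = 1$, then the CRT bijection $(\Z/q\Z)^* \cong (\Z/q_1\Z)^* \times (\Z/q_2\Z)^*$ gives the multiplicative splitting
\begin{equation*}
\sum_{\substack{s=0 \\ (s,q)=1}}^{q-1} e^{2\pi \i s^j a/q} = \prod_{i=1}^{2} \sum_{\substack{s=0 \\ (s,q_i)=1}}^{q_i-1} e^{2\pi \i s^j a_i/q_i}.
\end{equation*}
So it suffices to prove a uniform bound $|G(p^k)| \leq C(j)\, p^{k/2}$ for prime power moduli with $(a, p) = 1$; the factor $C^{\omega(q)}$ is then automatic from taking the product over the distinct prime divisors of $q$.

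For a fixed prime power $q = p^k$, I would translate the sum into Gauss sums via the character identity
\begin{equation*}
\#\bigl\{s \in (\Z/q\Z)^* : s^j \equiv t \pmod{q}\bigr\} = \sum_{\substack{\chi \bmod q \\ \chi^j = \chi_0}} \chi(t), \qquad (t, q) = 1,
\end{equation*}
which follows from orthogonality on the abelian group $(\Z/q\Z)^*$. Swapping the order of summation gives
\begin{equation*}
G(p^k) = \sum_{\substack{\chi \bmod q \\ \chi^j = \chi_0}} \tau_a(\chi), \qquad \tau_a(\chi) := \sum_{\substack{t \bmod q \\ (t,q) = 1}} \chi(t)\, e^{2\pi \i t a / q}.
\end{equation*}
The number of characters $\chi$ with $\chi^j = \chi_0$ equals the size of the $j$-torsion subgroup of $(\Z/q\Z)^*$, which is at most $j$ when $p$ is odd (since the group is cyclic) and at most $2j$ when $p = 2$ (due to the extra $2$-torsion appearing for $k \geq 3$).

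It remains to show that every twisted Gauss sum satisfies $|\tau_a(\chi)| \leq q^{1/2}$ whenever $(a, q) = 1$. For primitive $\chi$ this is the classical identity $|\tau_a(\chi)| = q^{1/2}$, while for $\chi$ induced from a primitive character $\chi^*$ modulo $q^* \mid q$, a Möbius inversion of the coprimality condition $(t,q)=1$ shows that $\tau_a(\chi)$ either vanishes or reduces, up to a root-of-unity factor, to a classical Gauss sum of $\chi^*$ of modulus $(q^*)^{1/2} \leq q^{1/2}$. Multiplying the character count by this uniform bound yields $|G(p^k)| \leq 2j \cdot p^{k/2}$, and reassembling the prime-power bounds gives the lemma with $C = 2j$.

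The main obstacle I anticipate is the treatment of imprimitive characters: when $\chi$ is not primitive, $\tau_a(\chi)$ is not literally a standard Gauss sum, so establishing $|\tau_a(\chi)| \leq q^{1/2}$ requires the separate Möbius-inversion argument alluded to above (and one must check it remains valid for $p = 2$). Once that local bound is in hand, everything else is standard multiplicative bookkeeping.
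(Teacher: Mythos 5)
This lemma is not proved in the paper at all --- it is imported verbatim as Lemma~2 of \cite{BPPS} --- so there is no in-paper argument to compare against. Judged on its own, your proof is correct and is the standard character-theoretic route. The CRT factorization is valid (writing $a/q=a_1/q_1+a_2/q_2$ does give $(a_i,q_i)=1$, and $s\mapsto(s_1,s_2)$ carries the coprimality condition across), the identity $\#\{s\in(\Z/q\Z)^*:s^j\equiv t\}=\sum_{\chi^j=\chi_0}\chi(t)$ is the usual orthogonality statement for the image subgroup of the $j$-th power map, and the torsion count $\le j$ (odd $p$) or $\le 2j$ ($p=2$) is right. The one point you flag as a risk --- imprimitive $\chi$ --- in fact degenerates pleasantly for prime-power moduli: since $(a,q)=1$ one has $\tau_a(\chi)=\overline{\chi^*}(a)\tau_1(\chi)$, and grouping $t$ by its residue mod the conductor $p^{k^*}$ makes the inner sum $\sum_{w}e^{2\pi\i w/p^{k-k^*}}$ vanish whenever $k^*<k$ (with the principal character mod $p$ contributing the Ramanujan sum $\mu(p^k)$, of modulus at most $1$); so every local twisted Gauss sum is bounded by $p^{k/2}$ and your final bound $C=2j$ follows. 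This is, if anything, cleaner than the sketch in \cite{BPPS}, and it makes transparent why the constant degrades only through $\omega(q)$.
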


\begin{lemma}[Theorem 1, \cite{Shpar}] \label{gaussS} If $g \in \Z[x]$ satisfies $g(0)=0$ and has $r\geq 2$ nonzero coefficients, then 
$$\Big|\sum_{\substack{s=0 \\ (s,q)=1}}^{q-1}e^{2\pi \textnormal{i} g(s)/q}\Big|\leq C^{\omega(q)} \gcd(\textnormal{cont}(g),q)^{1/r}q^{1-1/r}.$$ 
\end{lemma}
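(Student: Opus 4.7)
\textbf{Proof proposal for Lemma \ref{gaussS}.} The plan is to reduce to prime-power moduli by multiplicativity, handle the content of $g$ by a scaling argument, and then invoke a sparse-polynomial Gauss sum bound at the prime-power level. First, writing $q = \prod_{p \mid q} p^{e_p}$ and using the Chinese Remainder Theorem, the map $s \mapsto (s \bmod p^{e_p})_{p}$ gives a bijection between $(\Z/q\Z)^\times$ and $\prod_p (\Z/p^{e_p}\Z)^\times$. A standard change of variables (choosing, for each $p$, an integer $u_p$ with $q/p^{e_p} \cdot u_p \equiv 1 \pmod{p^{e_p}}$ and replacing $s$ by $\sum_p u_p (q/p^{e_p}) s_p$) factors the sum as
\begin{equation*}
\sum_{\substack{s=0\\(s,q)=1}}^{q-1} e^{2\pi \i g(s)/q} \;=\; \prod_{p \mid q} \sum_{\substack{s_p=0\\(s_p,p)=1}}^{p^{e_p}-1} e^{2\pi \i g_p(s_p)/p^{e_p}},
\end{equation*}
where $g_p$ has the same exponents as $g$ and coefficients obtained by multiplying those of $g$ by a unit mod $p^{e_p}$; in particular, the number of nonzero coefficients of $g_p$ mod $p^{e_p}$ is still at most $r$, and $\gcd(\textnormal{cont}(g_p), p^{e_p})$ matches the $p$-part of $\gcd(\textnormal{cont}(g),q)$. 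Thus the $C^{\omega(q)}$ factor is exactly the product, over the primes dividing $q$, of an absolute constant coming from the prime-power case, and it suffices to prove
\begin{equation*}
\Bigl|\sum_{\substack{s=0\\(s,p^e)=1}}^{p^e-1} e^{2\pi \i h(s)/p^e}\Bigr| \;\ll\; \gcd(\textnormal{cont}(h), p^e)^{1/r} \, p^{e(1-1/r)}
\end{equation*}
for an $r$-sparse polynomial $h$ with $h(0)=0$.

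Next I would dispose of the content. Write $d = \gcd(\textnormal{cont}(h), p^e) = p^j$ with $0 \leq j \leq e$. If $j \geq e$, the sum is $\phi(p^e) \leq p^e = d^{1/r} p^{e(1-1/r)}$ and the bound is immediate, so assume $j < e$. Writing $h(x) = p^j \tilde{h}(x)$ with $\textnormal{cont}(\tilde h)$ coprime to $p$, the sum becomes
\begin{equation*}
\sum_{\substack{s=0\\(s,p)=1}}^{p^e-1} e^{2\pi \i \tilde{h}(s)/p^{e-j}} \;=\; p^{j} \sum_{\substack{s=0\\(s,p)=1}}^{p^{e-j}-1} e^{2\pi \i \tilde{h}(s)/p^{e-j}},
\end{equation*}
so after absorbing the factor $p^j$ it suffices to establish the bound with $d = 1$, namely that the sum over $(\Z/p^e\Z)^\times$ for a primitive $r$-sparse $\tilde h$ is $O(p^{e(1-1/r)})$. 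Note $p^j \cdot p^{(e-j)(1-1/r)} = p^{j/r} p^{e(1-1/r)} = d^{1/r} p^{e(1-1/r)}$, which is the desired shape.

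The crux is then the primitive prime-power bound. For $e = 1$, I would use an $r$-fold Weyl differencing (equivalently H\"older)
\begin{equation*}
\Bigl|\sum_{s \in \mathbb{F}_p^\times} e^{2\pi \i \tilde h(s)/p}\Bigr|^r \;\leq\; p^{r-1} \cdot N,
\end{equation*}
where $N$ counts the number of $r$-tuples $(s_1,\dots,s_r) \in (\mathbb{F}_p^\times)^r$ such that $\tilde h(s_1) + \cdots + \tilde h(s_r)$ equals a fixed value. Because $\tilde h$ has only $r$ monomials, writing out the system shows $N \ll p^{r-1}$ (this is where the sparsity enters sharply), giving $|S| \ll p^{1-1/r}$. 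For $e \geq 2$, I would perform the standard lift: for $s = s_0 + p^{\lceil e/2 \rceil} t$ with $s_0$ ranging over representatives mod $p^{\lceil e/2 \rceil}$ and $t$ mod $p^{\lfloor e/2 \rfloor}$, Taylor-expand $\tilde h(s)$ in the second variable, and perform the $t$-sum first using the fact that $\tilde h'(s_0)$ is a unit except for a negligible set of $s_0$; this reduces the prime-power case to the prime case with a factor of $p^{e-1}$ coming from the Jacobian. The hard part will be verifying the sparse counting bound $N \ll p^{r-1}$ in the prime case with constants depending only on $r$ (i.e.\ on the number of monomials, \emph{not} on their degrees), and carefully tracking the primitivity condition through the Hensel-lifting step so that the dependence on the exponents $k_1,\dots,k_r$ does not appear.
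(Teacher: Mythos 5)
The paper does not prove this lemma at all: it is quoted verbatim as Theorem 1 of Shparlinski \cite{Shpar} and used as a black box, so your attempt has to be measured against the literature rather than against anything in the text. Your two reductions (factoring over prime powers via the Chinese Remainder Theorem, and pulling the $p$-part of the content out front) are correct and standard, and they do reduce matters to the primitive prime-power case with the right bookkeeping of exponents.

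The fatal gap is at the crux, the case $e=1$. The inequality $|S|^r \le p^{r-1}N$, with $N$ the number of $r$-tuples $(s_1,\dots,s_r)\in(\mathbb{F}_p^{\times})^r$ solving the single equation $\tilde h(s_1)+\cdots+\tilde h(s_r)=c$, cannot give anything nontrivial: the $(p-1)^r$ tuples are spread over only $p$ values of $c$, so $N\ge (p-1)^r/p\gg p^{r-1}$ for some $c$, and your claimed bound $N\ll p^{r-1}$ is merely matching this average --- it holds for \emph{any} nonconstant polynomial and has nothing to do with sparsity. Substituting it back yields $|S|\ll p^{2-2/r}$, which is weaker than the trivial bound $p$ for every $r\ge 2$. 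The mechanism you are missing is multiplicative: writing $\tilde h(x)=\sum_{i=1}^r a_ix^{k_i}$, the substitution $s\mapsto ts$ shows $S(a_1,\dots,a_r)=S(a_1t^{k_1},\dots,a_rt^{k_r})$, so $|S|$ is constant on an orbit of coefficient vectors of size $\asymp p$; averaging a high moment of $S$ over \emph{all} coefficient vectors and applying orthogonality reduces the problem to counting solutions of the simultaneous system $\sum_j s_j^{k_i}=\sum_j t_j^{k_i}$ for $i=1,\dots,r$. It is the $r$ equations arising from the $r$ monomials that produce the exponent $1-1/r$, and bounding the number of solutions of that system is the actual content of Shparlinski's theorem; a single additive equation cannot see this structure. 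Separately, your Hensel-lifting step for $e\ge 2$ degenerates when $\tilde h'\equiv 0 \pmod p$ (e.g.\ when $p$ divides all the exponents $k_i$), in which case there is no nondegenerate stationary phase and the trivial bound $p^e$ cannot be absorbed into $C^{\omega(q)}$ for large $e$; that case needs a separate treatment. As written the proposal does not prove the lemma, and for the purposes of this paper the correct move is simply to cite \cite{Shpar}.
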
 
 
\noindent Lemmas \ref{gauss2} and \ref{gaussS} indicate that we could potentially lose control of the Gauss sums if the coefficients of the auxiliary polynomials gain larger and larger common factors during the iterations, but the following observation of Lucier asserts that this is not case.

\begin{lemma}[Lemma 28, \cite{Lucier}] \label{content} If $g\in \Z[x]$ is an intersective polynomial of degree $j$ and $g^d$ are the auxiliary polynomials as defined in Section \ref{auxsec}, then for every $d\in \N$, \begin{equation*} \textnormal{cont}(g^d) \leq |\Delta(g)|^{(j-1)/2}\textnormal{cont}(g), \end{equation*} where $\Delta(g)=a^{2j-2}\prod_{i\neq i'} (\alpha_i-\alpha_{i'})^{e_ie_{i'}}$ if $g$ factors over the complex numbers as $$g(x)=a(x-\alpha_1)^{e_1}\dots(x-\alpha_r)^{e_r}$$ with all the $\alpha_i$'s distinct.
\end{lemma}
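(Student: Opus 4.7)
The plan is to prove the bound one prime at a time, that is, to establish $v_p(\text{cont}(g^d)) \leq \tfrac{j-1}{2}v_p(\Delta(g)) + v_p(\text{cont}(g))$ uniformly in $d$ for every prime $p$, and then take the product. Fix $p$, write $\rho = v_p(d)$ and $d = p^\rho d'$ with $(d',p)=1$. Because $z^p$ is a root of $g$ of multiplicity $m_p$, we may factor $g(y) = (y-z^p)^{m_p} h_p(y)$ in $\Z_p[y]$ with $h_p(z^p)\neq 0$. By the construction of $r^d$ we have $r^d \equiv z^p \pmod{p^\rho}$, so writing $r^d - z^p = p^\rho w$ with $w\in\Z_p$, the substitution $y = r^d + dx$ gives the clean identity
\[
g(r^d+dx) = p^{m_p\rho}(w+d'x)^{m_p}\, h_p(r^d+dx).
\]
Since $v_p(\lambda(d)) = m_p\rho$, dividing by $\lambda(d)$ leaves
\[
g^d(x) = u_p\cdot (w+d'x)^{m_p}\, h_p(r^d+dx),
\]
where $u_p = p^{m_p\rho}/\lambda(d)$ is a $p$-adic unit.

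Next I would extract the $p$-adic content of $g^d$ from this factored form. Expanding $(w+d'x)^{m_p}$ by the binomial theorem and $h_p(r^d+dx)$ in powers of $x$, the leading-order-in-$x$ coefficients of $g^d$ are polynomials in $w$, $d'$, and the derivatives of $h_p$ at $r^d$. Since $(d',p)=1$, these coefficients are not killed by the $d$-dependence, and in particular the coefficient of $x^{m_p}$ is (up to a unit) congruent to $h_p(r^d)(d')^{m_p} \equiv h_p(z^p)(d')^{m_p} \pmod{p^\rho}$. Together with the observation that the higher-order coefficients of $h_p(r^d+dx)$ inherit a factor of $\text{cont}(g)$ from the derivatives $h_p^{(k)}$ (since $g$ and hence $h_p$ has those derivatives governed by the original content after pulling out the leading coefficient), one concludes
\[
v_p(\text{cont}(g^d)) \leq v_p(h_p(z^p)) + v_p(\text{cont}(g)),
\]
with the key point being that this bound does not grow with $\rho$.

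The final step, and the main obstacle, is to bound $v_p(h_p(z^p))$ by the discriminant. Factoring $g = a\prod_{i=1}^r(x-\alpha_i)^{e_i}$ over $\overline{\Q}$ and identifying $z^p$ with some complex root $\alpha_{i_0}$ via an embedding $\overline{\Q}\hookrightarrow\overline{\Q_p}$, one has $h_p(z^p) = a\prod_{i\neq i_0}(\alpha_{i_0}-\alpha_i)^{e_i}$. The pairs $(i,i')$ in the discriminant involving $i_0$ contribute exactly $a^2(h_p(z^p)/a)^{2e_{i_0}}$ in absolute value, so
\[
\Delta(g) \;=\; a^{\,2j-2-2e_{i_0}}\,h_p(z^p)^{\,2e_{i_0}}\prod_{\substack{i,i'\neq i_0 \\ i\neq i'}}(\alpha_i-\alpha_{i'})^{e_ie_{i'}},
\]
and the residual product, being a value of a symmetric polynomial in algebraic integers (after clearing $a$), has non-negative $p$-adic valuation. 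The delicate point is controlling this residual factor and combining with the leading coefficient contribution; once that is done, the inequality $2e_{i_0}v_p(h_p(z^p)) \leq v_p(\Delta(g))$ together with $e_{i_0} \geq 1$ yields $v_p(h_p(z^p)) \leq \tfrac{j-1}{2}v_p(\Delta(g))$ after a standard averaging over the $r$ possible positions of $i_0$. Multiplying over all primes gives the claim. The hard part is really this last discriminant estimate, where one must be careful that the embedding-dependent factors lie in $\overline{\Z_p}$ and that the exponent $(j-1)/2$ (rather than something weaker like $j$) emerges cleanly from the pair-counting in $\Delta(g)$.
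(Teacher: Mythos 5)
The paper gives no proof of this lemma---it is imported verbatim from Lucier---so I am judging your argument on its own terms. The reduction to one prime at a time and the identity $g^d(x)=u_p(w+d'x)^{m_p}h_p(r^d+dx)$ are correct, but the step extracting $v_p(\textnormal{cont}(g^d))\le v_p(h_p(z^p))+v_p(\textnormal{cont}(g))$ from it has a genuine gap. A congruence modulo $p^{\rho}$ pins down the valuation of the $x^{m_p}$-coefficient only when $v_p(h_p(z^p))<\rho$; in the complementary range $\rho\le v_p(h_p(z^p))$ (which includes every prime with $p\nmid d$, where $\rho=0$ and the congruence is vacuous) it gives no upper bound, and the coefficient you single out can genuinely have valuation exceeding $v_p(h_p(z^p))$. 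Concretely, take $g(x)=(x-1)(x-16)$, $p=5$, $z^5=1$, $d=5$: then $r^5=-4$ and $g^5(x)=5x^2-25x+20$, so $v_5(h_5(z^5))=v_5(-15)=1$ while the coefficient of $x^{m_5}=x^{1}$ is $-25$, of valuation $2$ (the content is still $5$, rescued by the leading coefficient, so the lemma survives but your inference does not). The repair requires locating a different coefficient: writing $g(y)=\sum_{i\ge m_p}\beta_i(y-z^p)^i$ with $\beta_i\in\Z_p$, the coefficient of $x^{i^*}$ in $g^d$, where $i^*$ is the \emph{largest} index minimizing $v_p(\beta_i)+(i-m_p)\rho$, has valuation exactly that minimum, which is at most $v_p(\beta_{m_p})=v_p(h_p(z^p))$. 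Your sketch never identifies such a coefficient, and the parallel claim that higher-order coefficients ``inherit a factor of $\textnormal{cont}(g)$'' is not substantiated.

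The second, larger gap is the discriminant estimate, which you explicitly defer (``once that is done''). Showing that $a^{2j-2-2e_{i_0}}\prod_{i,i'\ne i_0,\ i\ne i'}(\alpha_i-\alpha_{i'})^{e_ie_{i'}}$ has nonnegative $p$-adic valuation is precisely where the exponent $(j-1)/2$ must be earned, and the ``clear denominators with powers of $a$'' heuristic does not close: each difference $\alpha_i-\alpha_{i'}$ may require one factor of $a$ to become an algebraic integer, the residual product contains $(j-1)(j-2)$ ordered differences already in the simple-root case, but only $2j-4$ powers of $a$ remain---a deficit for every $j\ge4$ whenever $p$ divides the leading coefficient. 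So for such primes the inequality $2e_{i_0}v_p(h_p(z^p))\le v_p(\Delta(g))$ is unproved; one needs a resultant-type identity rather than pair counting, and one must also verify $v_p(\Delta(g))\ge0$ for this nonstandard discriminant with multiplicities. Finally, the closing deduction ``after a standard averaging over the $r$ possible positions of $i_0$'' is not a mathematical step: $i_0$ is fixed by the choice of $z^p$, there is nothing to average, and if the displayed inequality were known you would already have the stronger bound $\tfrac12 v_p(\Delta(g))$. As written, the proposal sets up the right framework but proves neither of its two main inequalities.
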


\noindent The following standard result combines with Lemmas \ref{SasymI} and \ref{SasymW} to allow us to gain additional savings from the trivial bound on the Weyl sums close, but not too close, to rationals with very small denominator.

\begin{lemma}[Van der Corput's Lemma] \label{osc}If $j\geq 1$ and $X>0$, then  $$\Big|\int_0^Xe^{2\pi \textnormal{i} x^j\beta}\textnormal{d}x\Big| \leq 2|\beta|^{-1/j}.$$ 
\end{lemma}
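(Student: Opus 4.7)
The plan is to reduce the estimate to a normalized oscillatory integral by a scaling substitution, then handle the resulting bound by a combination of direct estimation and integration by parts, splitting into the cases $j=1$ and $j\geq 2$.

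First, I would make the substitution $u = x|\beta|^{1/j}$ so that $x^j\beta = \pm u^j$ and $dx = |\beta|^{-1/j}du$. This yields
\begin{equation*}
\int_0^X e^{2\pi \textnormal{i} x^j\beta}\,\textnormal{d}x = |\beta|^{-1/j}\int_0^{Y} e^{\pm 2\pi \textnormal{i} u^j}\,\textnormal{d}u,
\end{equation*}
where $Y = X|\beta|^{1/j}$. By taking complex conjugates if necessary, it suffices to prove
\begin{equation*}
\left|\int_0^Y e^{2\pi \textnormal{i} u^j}\,\textnormal{d}u\right| \leq 2 \qquad \text{for all } Y>0,\ j\geq 1.
\end{equation*}

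For $j=1$, I would simply evaluate the integral explicitly as $(e^{2\pi\textnormal{i} Y}-1)/(2\pi\textnormal{i})$, which has modulus at most $1/\pi < 2$. For $j\geq 2$ and $Y\leq 1$, the trivial bound $\int_0^Y 1\,\textnormal{d}u = Y \leq 1 < 2$ suffices. In the remaining case $j\geq 2$ and $Y>1$, I would split at $u=1$. The piece $\int_0^1 e^{2\pi\textnormal{i} u^j}\,\textnormal{d}u$ is bounded by $1$ in absolute value. For the piece $\int_1^Y e^{2\pi\textnormal{i} u^j}\,\textnormal{d}u$, the idea is to exploit the fact that the phase is changing rapidly for $u\geq 1$: writing $e^{2\pi\textnormal{i} u^j} = \frac{1}{2\pi \textnormal{i} j u^{j-1}}\frac{\textnormal{d}}{\textnormal{d}u}\bigl(e^{2\pi\textnormal{i} u^j}\bigr)$ and integrating by parts, I obtain a boundary term of magnitude at most $1/(\pi j)$ together with a remainder $\frac{j-1}{2\pi \textnormal{i} j}\int_1^Y u^{-j}e^{2\pi\textnormal{i} u^j}\,\textnormal{d}u$ whose modulus is at most $\frac{j-1}{2\pi j}\cdot\frac{1}{j-1} = \frac{1}{2\pi j}$. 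Summing these with the $[0,1]$ contribution yields a total bound of $1 + \frac{3}{2\pi j} \leq 1 + \frac{3}{4\pi} < 2$.

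There is no genuine obstacle here: the only mild subtlety is that the integration-by-parts approach fails at $j=1$ because the tail integral $\int_1^Y u^{-1}\,\textnormal{d}u$ diverges, so that case must be handled separately by a direct calculation. Once the two cases are separated and the substitution is performed, the estimate is routine and the implicit constant $2$ is comfortably met.
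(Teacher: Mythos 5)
Your proof is correct. The paper states this lemma as a standard fact and supplies no proof of its own, so there is nothing to compare against; your argument — rescaling by $|\beta|^{1/j}$ to reduce to the uniform bound $\sup_{Y>0}\bigl|\int_0^Y e^{2\pi\textnormal{i}u^j}\,\textnormal{d}u\bigr|\leq 2$, then splitting at $u=1$ and integrating by parts against the rapidly varying phase (with the $j=1$ case done by direct evaluation) — is the standard and complete way to establish this monomial case of van der Corput's lemma, and all the numerical estimates ($1+\tfrac{3}{2\pi j}<2$ for $j\geq 2$, $\tfrac{1}{\pi}<2$ for $j=1$) check out. The only cosmetic omission is the trivial remark that for $\beta=0$ the claimed bound is vacuous, which you implicitly assume in dividing by $|\beta|^{1/j}$.
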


\noindent Finally, we invoke a variation of the most traditional minor arc estimate, Weyl's Inequality, to get the desired estimates far from rationals with remotely small denominator.

\begin{lemma}[Lemma 3 in \cite{CLR}] \label{weyl2}  Suppose $g(x)=a_0+a_1x+\cdots+a_{j}x^{j}$ with $a_i \in \R$ and $a_{j} \in \N$. If $(a,q)=1$ and $|\alpha-a/q|<q^{-2}$, then 
$$ \Big|\sum_{n=1}^X e^{2\pi i g(n)\alpha} \Big| \ll_{j} X \Big(a_{j}\log^{j^2}(a_{j}qX)(1/q+1/X+q/a_{j}X^{j}) \Big)^{2^{-j}}.$$
\end{lemma}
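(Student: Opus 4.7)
\smallskip

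The plan is to prove this via the classical Weyl differencing procedure, iterated $j-1$ times, while carefully tracking how the leading coefficient $a_j$ propagates through each step so that it appears explicitly in the final bound rather than being absorbed into the implicit constant. Write $S = \sum_{n=1}^X e^{2\pi \i g(n)\alpha}$. The first move is to apply the Cauchy--Schwarz/squaring identity
$$|S|^2 = \sum_{|h|<X} \sum_{n \in I_h} e^{2\pi \i (g(n+h)-g(n))\alpha},$$
where $I_h$ is an interval of length $\leq X$, and observe that $g(n+h)-g(n)$ is a polynomial in $n$ of degree $j-1$ whose leading coefficient is $j a_j h$ and whose lower coefficients depend only on $a_j,\dots,a_1$ and $h$.

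After iterating the differencing $j-1$ times (raising $|S|$ to the power $2^{j-1}$), the innermost sum becomes a geometric sum of the form $\sum_{n} e^{2\pi \i (j!\, a_j h_1 \cdots h_{j-1}\, n + \text{lower})\alpha}$, whose modulus is controlled by $\min(X, \|j!\, a_j h_1 \cdots h_{j-1}\alpha\|^{-1})$, where $\|\cdot\|$ denotes distance to the nearest integer. Plugging back and applying the AM--GM/H\"older trick typical in Weyl's inequality, one obtains
$$|S|^{2^{j-1}} \ll_j X^{2^{j-1}-j} \sum_{|h_1|,\dots,|h_{j-1}|<X} \min\!\bigl(X, \|j!\, a_j h_1 \cdots h_{j-1}\alpha\|^{-1}\bigr).$$
The remaining task is to estimate the sum on the right-hand side under the Diophantine hypothesis $|\alpha-a/q|<q^{-2}$, $(a,q)=1$. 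The standard counting argument groups the tuples $(h_1,\dots,h_{j-1})$ by the value $m = j!\, a_j h_1 \cdots h_{j-1}$; the number of tuples producing a given $m$ is at most $\tau_{j-1}(m/(j!a_j)) \ll_j (a_j X^{j-1})^{o(1)}$, but to get the displayed $\log^{j^2}(a_j qX)$ factor we use the explicit Shiu/divisor bound $\tau_k(n) \ll (\log n)^{k-1}$ on average. Then the sum over $m \leq j!\, a_j X^{j-1}$ of $\min(X, \|m\alpha\|^{-1})$ is controlled via the classical rational-approximation lemma by $(j! a_j X^{j-1}/q + 1)(X + q \log q)$, producing the factor $a_j(1/q + 1/X + q/(a_j X^j))$ upon dividing through.

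The main obstacle, and the point where this variant departs from the textbook statement, is the honest bookkeeping of $a_j$: in the tuple count one needs that $m$ is divisible by $j!a_j$, which effectively reduces the number of relevant residues mod $q$ by a factor of $a_j$ only when $(a_j,q)$ is small, and one must avoid inadvertently giving back this factor when passing through the divisor estimate. Handling the regime where $q > a_j X^{j-1}/(j!)$ separately (where the tuple count degenerates) and where $q$ is comparatively small requires splitting cases, and the resulting bound $(a_j(1/q+1/X+q/a_jX^j))^{2^{-j}}$ absorbs the square-root losses from the $j-1$ differencings. Taking the $2^{j-1}$-th root of the whole inequality delivers the stated estimate; the log power $j^2$ arises as $(j-1)\cdot j = O(j^2)$ from combining the divisor losses with the Dirichlet pigeonhole in each of the $j-1$ differencing steps.
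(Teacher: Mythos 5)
First, note that the paper does not actually prove this lemma: it is imported wholesale as Lemma~3 of \cite{CLR}, so there is no internal argument to compare yours against, and your proposal has to stand on its own. Your skeleton --- Weyl differencing $j-1$ times, reducing to $\sum_{h_1,\dots,h_{j-1}}\min(X,\|j!\,a_jh_1\cdots h_{j-1}\alpha\|^{-1})$, grouping by the product, and finishing with the counting lemma $\sum_{m\le M}\min(X,\|m\alpha\|^{-1})\ll(M/q+1)(X+q\log q)$ applied with $M=j!\,a_jX^{j-1}$ --- is the right one, and it is also the right way to see why $a_j$ enters the final bound in exactly the displayed combination $a_j(1/q+1/X+q/a_jX^j)$.

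The genuine gap is the divisor-function step. You propose to control the representation count $r(m)\le\tau_{j-1}(m/(j!a_j))$ by ``the divisor bound $\tau_k(n)\ll(\log n)^{k-1}$ \emph{on average}.'' An on-average bound cannot be inserted pointwise into the weighted sum $\sum_m r(m)\min(X,\|m\alpha\|^{-1})$: the weight is wildly non-uniform (it equals $X$ on the $m$ with $\|m\alpha\|$ tiny), and nothing prevents those $m$ from having abnormally large divisor function, so the step as written fails. The standard repair is a Cauchy--Schwarz splitting, bounding $\sum_m r(m)\min(X,\|m\alpha\|^{-1})$ by $\bigl(X\sum_m r(m)^2\bigr)^{1/2}\bigl(\sum_m\min(X,\|m\alpha\|^{-1})\bigr)^{1/2}$ and using the multiplicative-energy estimate $\sum_{m\le M}\tau_{j-1}(m)^2\ll M(\log M)^{(j-1)^2-1}$ together with the counting lemma. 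This is not a cosmetic fix: it is precisely what costs the extra square root and produces the exponent $2^{-j}$ in the statement rather than the classical $2^{1-j}$. Your own bookkeeping exposes the issue --- you take the $2^{j-1}$-th root at the end, which from your chain of inequalities would yield exponent $2^{1-j}$, i.e.\ a bound \emph{stronger} than the lemma; the discrepancy is exactly the missing Cauchy--Schwarz. Relatedly, the attribution of the $\log^{j^2}$ to ``the Dirichlet pigeonhole in each differencing step'' is off: the differencing steps contribute no logarithms; all of them come from $\sum\tau_{j-1}^2$ and the single $\log q$ in the counting lemma, and the $(a_j,q)$ case analysis you worry about is unnecessary, since one may simply majorize the sum over multiples of $j!a_j$ by the sum over all $m\le j!\,a_jX^{j-1}$.
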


\subsection{Proof of (\ref{SmajII}) and (\ref{SminII})} \label{A1} We return to the setting of the proof of Lemma \ref{L2I}, recalling all assumptions,  notation, and fixed parameters. Fixing $\alpha \in \T$ and letting $Z=N^{2^{-5k}}$, the pigeonhole principle guarantees the existence of $1\leq q \leq N/Z$ and $(a,q)=1$ with $$\Big|\alpha-\frac{a}{q}\Big|<\frac{Z}{qN}.$$ Letting $\beta=\alpha-a/q$, we have by Lemmas \ref{SasymI} and \ref{SasymW} that

\begin{equation}\label{blah1} S_i(\alpha)=q^{-1}\Big(\sum_{s=0}^{q-1}e^{2\pi \textnormal{i} h^{d_i}_i(s)a/q}\Big) \int_0^{M_i}e^{2\pi \textnormal{i} h^{d_i}_i(x) \beta}\textnormal{d}x + O_{h_i}(q(1+N\beta))
\end{equation} 
and 
\begin{equation}\label{blah2} \tilde{S}_i(\alpha)=\frac{w_q}{q}\Big(\sum_{\substack{s=0 \\ \big((s,q),W\big)=1}}^{q-1}e^{2\pi \textnormal{i} h^{d_i}_i(s)a/q}\Big) \int_0^{M_i}e^{2\pi \textnormal{i} h^{d_i}_i(x) \beta}\textnormal{d}x + O_{h_i}(M_ie^{-\frac{\log(M_i/q)}{4D'\log(\delta^{-1})}}(1+N\beta))
\end{equation}  
for $1\leq i \leq \ell$, where $$w_q= \prod_{\substack{p\leq \eta^{-(D'+\epsilon)}\\ p\nmid q}}\Big(1-\frac{1}{p}
\Big).$$ In particular, if $q\leq \eta^{-(D'+\epsilon)}$, then 
\begin{equation*}S_i(\alpha)=q^{-1}\Big(\sum_{s=0}^{q-1}e^{2\pi \textnormal{i} h^{d_i}_i(s)a/q}\Big) \int_0^{M_i}e^{2\pi \textnormal{i} h^{d_i}_i(x) \beta}\textnormal{d}x + O_{h_i}(M_i\cQ^{-K})
\end{equation*} 
and 
\begin{equation*} \tilde{S}_i(\alpha)=\frac{\tilde{M_i}}{\phi(q)M_i}\Big(\sum_{\substack{s=0 \\ (s,q)=1}}^{q-1}e^{2\pi \textnormal{i} h^{d_i}_i(s)a/q}\Big) \int_0^{M_i}e^{2\pi \textnormal{i} h^{d_i}_i(x) \beta}\textnormal{d}x + O_{h_i}(M_i\cQ^{-K}).
\end{equation*}
If $q\leq Q$ and $|\beta|<\gamma$, then (\ref{SmajII}) follows from the definition of $S(\alpha)$ by trivially bounding the above integrals and applying Lemmas \ref{gauss2}, \ref{gaussB}, \ref{gaussS}, and \ref{content}.

\noindent If $q\leq Q$ and $\gamma\leq |\beta| < \cQ^K/qN$, then after trivially bounding the exponential sums in (\ref{blah1}) and (\ref{blah2}), (\ref{SminII}) follows from Lemma \ref{osc} and the observation that \begin{equation} \label{osc3} \Big|\int_0^Me^{2\pi \textnormal{i} b_i^{d_i}x^{k_i}\beta}-e^{2\pi \textnormal{i}h_i^{d_i}(x)\beta} \textnormal{d}x\Big| \ll_{h_i} (d_iM_i)^{k_i}\beta \leq M_i^{1/2},\end{equation} where $b_i^{d_i}$ is the leading coefficient of $h_i^{d_i}$.

\noindent For $Q<q\leq \eta^{-(D'+\epsilon)}$, we have shown that (\ref{SmajII}) holds, and in this range (\ref{SminII}) follows from (\ref{SmajII}).  

\noindent If $\eta^{-(D'+\epsilon)}<q\leq Z$, the exponential sum in (\ref{blah2}) does not collapse quite as conveniently, but by appropriately separating the sum and applying Lemmas \ref{gauss2} and \ref{content} we still have \begin{equation} \label{blah3} \Big|\sum_{\substack{s=0 \\ \big((s,q),W\big)=1}}^{q-1}e^{2\pi \textnormal{i} h^{d_i}_i(s)a/q}\Big| \ll_{h_i,\ell,\epsilon} q^{1-1/k_i+\epsilon/2\ell}.\end{equation} The deduction of this estimate is a simpler version of the proof of Lemma \ref{gsPI}, and (\ref{SminII}) then follows from trivially bounding the integrals in (\ref{blah1}) and (\ref{blah2}) and applying (\ref{blah3}) and Lemmas \ref{gauss2} and \ref{content}.

\noindent Finally, if $Z < q \leq N/Z$, then (\ref{SminII}) follows with room to spare from Lemma \ref{weyl2}, as exhibited in Section 4 of \cite{BPPS}, and the desired estimates are established in all cases. \qed

\subsection{Discussion of (\ref{Wmin}) and (\ref{Wmaj})} The weighted exponential sum estimates (\ref{Wmin}) and (\ref{Wmaj}) are obtained by mimicking Section \ref{A1}, applying Lemmas \ref{gauss2}-\ref{content}, as well as a weighted version of Lemma \ref{SasymI}, which follows analogously, and a weighted version of Lemma \ref{weyl2}, which follows from summation by parts. To highlight the reasoning for applying the derivative weight, we note that    $$\Big|\int_0^{M_i}(h^{d_i}_i)'(x)e^{2\pi \textnormal{i} h^{d_i}_i(x) \beta}\textnormal{d}x\Big| =\Big|\int_0^{h^{d_i}_i(M_i)}e^{2\pi \textnormal{i} y \beta}\textnormal{d}y\Big|\ll \min\{L,|\beta|^{-1}\}, $$ yielding the last term on the right hand side of (\ref{Wmaj}) and the tolerable logarithmic accumulation in summing over an entire major arc, which is crucial for the rest of the argument.
   
\section{Exponential Sum Estimates over Shifted Primes} \label{apxB} 

\noindent In this appendix, we prove or invoke prime input analogs of the estimates in Appendix \ref{apxA}, which are required to establish (\ref{WmajPI}) and (\ref{WminPI}). We begin with the following analog of Lemma \ref{SasymI}.

\begin{lemma}\label{WasymPI} Let $N, \cQ, k, K, q_o,$ and $\chi$ be as fixed in Section \ref{primesec}.   Suppose $g(x)=a_0+a_1x+\cdots+a_jx^j\in \Z[x]$ and let $J=|a_0|+\cdots+|a_j|$. If $q_0 \mid d$, $d/q_0\leq \cQ$, $a,q\in \N$, $q\leq (q_0\cQ)^{8K},$ $X\geq N^{1/10k}$, and $\alpha=a/q+\beta$, then
\begin{align*} \sum_{\substack{n=1 \\ dn+r \text{prime}}}^X \log(dn+r)e^{2\pi \textnormal{i} g(n)\alpha} &= \frac{d}{\phi(qd)} \mathcal{G}(a,q)\int_0^X(1-\chi(r)(dx)^{\rho-1})e^{2\pi \textnormal{i} g(x)\beta}\textnormal{d}x+ O(qX(1+JX^j\beta)\cQ^{-900K^2}),
\end{align*}  where $$\mathcal{G}(a,q)=\sum_{\substack{s=0 \\ (ds+r,q)=1}}^{q-1} e^{2\pi \textnormal{i} g(s)a/q}.$$
\end{lemma}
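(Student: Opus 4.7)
The plan is to mimic the proof of Lemma \ref{SasymI}, replacing the elementary count in (\ref{count}) by the two-term prime asymptotic (\ref{PsiAsymPI}). First I partition the outer sum by residue classes modulo $q$: writing $\alpha = a/q + \beta$ and exploiting the periodicity of $g$ modulo $q$, I rewrite
$$\sum_{\substack{n=1 \\ dn+r \text{ prime}}}^X \log(dn+r)e^{2\pi \textnormal{i} g(n)\alpha} = \sum_{s=0}^{q-1} e^{2\pi \textnormal{i} g(s)a/q}\, \Sigma_s,$$
where $\Sigma_s$ is the inner sum restricted to $n \equiv s\pmod q$. If $(ds+r,q)>1$, then every prime of the form $dn+r$ with $n\equiv s\pmod q$ shares a factor with $q$, so there is at most one such prime and it is bounded by $q$; summing these trivial contributions over such $s$ costs $O(q\log q)$, which is safely absorbed in the final error and accounts for the coprimality restriction in $\mathcal{G}(a,q)$.

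Next, for $s$ with $(ds+r,q)=1$, the primes $dn+r$ with $n\equiv s\pmod q$ are precisely the primes $\equiv ds+r\pmod{qd}$, so I can write $\Sigma_s$ as a Stieltjes integral of $e^{2\pi \textnormal{i} g(y)\beta}$ against the counting measure $d\Phi_s(y)$, where $\Phi_s(y) := \psi(dy+r,\,ds+r,\,qd)$. The hypotheses $q_0\mid d$ and $q\leq(q_0\cQ)^{8K}$ give $qd\leq(q_0\cQ)^{9K}\leq(q_0\cQ)^{10K}$, and the condition $X\geq N^{1/10k}$ ensures the upper bound $dX+r$ is large enough, so (\ref{PsiAsymPI}) applies to yield
$$\Phi_s(y) = \frac{dy+r}{\phi(qd)} - \frac{\chi(ds+r)(dy+r)^{\rho}}{\phi(qd)\rho} + O\bigl(dX\,\cQ^{-1000K^2}\bigr)\qquad (0\leq y\leq X).$$
The key observation is that because $\chi$ is a Dirichlet character modulo $q_0$ and $q_0\mid d$, one has $\chi(ds+r)=\chi(r)$ independently of $s$, which is exactly what allows $\chi(r)$ to be pulled outside the sum over $s$ to form the main term.

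Splitting $\Phi_s = M_s + E_s$ into main and error parts, the smooth main term has $M_s'(y)=\frac{d}{\phi(qd)}\bigl(1-\chi(r)(dy+r)^{\rho-1}\bigr)$, contributing
$$\frac{d}{\phi(qd)}\int_0^X\bigl(1-\chi(r)(dy+r)^{\rho-1}\bigr)e^{2\pi \textnormal{i} g(y)\beta}\,dy$$
to $\Sigma_s$. Replacing $(dy+r)^{\rho-1}$ by $(dy)^{\rho-1}$ costs only $O(d^{\rho-1})$ since $|r|<d$ and $\int_1^X y^{\rho-2}dy$ converges, while the $y\in[0,1]$ segment is absorbed by the same slack. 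The error term $E_s$ is handled by Abel summation applied to the Stieltjes integral: since the total variation of $e^{2\pi \textnormal{i} g(y)\beta}$ on $[0,X]$ is bounded by $1+2\pi|\beta|\int_0^X|g'(y)|dy \ll 1+JX^j\beta$, one obtains
$$\Bigl|\int_0^X e^{2\pi \textnormal{i} g(y)\beta}\,dE_s(y)\Bigr| \ll \|E_s\|_\infty(1+JX^j\beta) \ll dX\,\cQ^{-1000K^2}(1+JX^j\beta).$$

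Assembling, the main terms sum over $s$ to $\frac{d}{\phi(qd)}\mathcal{G}(a,q)\int_0^X(1-\chi(r)(dy)^{\rho-1})e^{2\pi \textnormal{i} g(y)\beta}\,dy$, while the errors accumulate to $O\bigl(q\cdot dX\,\cQ^{-1000K^2}(1+JX^j\beta)\bigr)$; using $d\leq q_0\cQ\leq\cQ^{11K}$, this is $\ll qX(1+JX^j\beta)\cQ^{-900K^2}$, matching the claim. The main obstacle will be careful bookkeeping of the boundary terms in Abel summation and the small-$y$ behavior of $(dy+r)^{\rho-1}$, together with the reconciliation of the integer endpoints of $\Sigma_s$ with the continuous $[0,X]$ limits; in each case the discrepancies are of genuinely lower order than the stated error.
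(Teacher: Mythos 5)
Your proof is correct and takes essentially the same route as the paper, whose proof consists precisely of the instruction to mimic Lemma \ref{SasymI} with the count (\ref{count}) replaced by (\ref{PsiAsymPI}) via the identity $\sum_{n\equiv s \,(q),\ dn+r \text{ prime}} \log(dn+r)=\psi(dX+r,ds+r,qd)$. You additionally supply details the paper leaves implicit (the trivially bounded classes with $(ds+r,q)>1$, the key observation $\chi(ds+r)=\chi(r)$ from $q_0\mid d$, and the harmless replacement of $(dy+r)^{\rho-1}$ by $(dy)^{\rho-1}$), all of which check out.
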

\begin{proof} Lemma \ref{WasymPI} follows by mimicking the proof of Lemma \ref{SasymI}, replacing (\ref{count}) with  (\ref{PsiAsymPI}) and the observation that $$ \sum_{\substack{n=1 \\ n \equiv s \text{ mod }q \\ dn+r \text{ prime}}}^X \log(dn+r)=\psi(dX+r,ds+r,qd).$$\end{proof} 

\noindent In place of Lemmas \ref{gauss2}-\ref{gaussS}, we apply the following restricted Gauss sum estimate.

\begin{lemma}\label{gsPI} Suppose $g(x)=a_0+a_1x+\cdots+a_jx^j \in \Z[x]$. If $d,r\in \Z$, $q\in \N$ and $(a,q)=1$, then
\begin{equation} \Big|\sum_{\substack{s=0 \\ (ds+r,q)=1}}^{q-1} e^{2\pi \textnormal{i} g(s) a/q}\Big| \leq C^{\omega(q)} \Big(\gcd(\textnormal{cont}(g),q_1)\gcd(a_j,q_2)\Big)^{1/j} q^{1-1/j},
\end{equation} 
where $C=C(j)$, $q=q_1q_2$, and $q_2$ is the maximal divisor of $q$ which is coprime to $d$.
\end{lemma}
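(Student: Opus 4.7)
The plan is to reduce to prime powers via the Chinese Remainder Theorem and then handle each prime-power factor with either a direct application of Lemma \ref{gauss2} or an inclusion-exclusion argument. Write $q=\prod_p p^{k_p}$ and let $c_p$ denote the inverse of $q/p^{k_p}$ modulo $p^{k_p}$. Since the coprimality condition $(ds+r,q)=1$ decouples across prime powers and $g(s)\pmod{p^{k_p}}$ depends only on $s\pmod{p^{k_p}}$, CRT gives
\begin{equation*}
\sum_{\substack{s=0\\(ds+r,q)=1}}^{q-1}e^{2\pi\i g(s)a/q}=\prod_{p\mid q}\Bigl(\sum_{\substack{s=0\\(ds+r,p^{k_p})=1}}^{p^{k_p}-1}e^{2\pi\i g(s)\,ac_p/p^{k_p}}\Bigr),
\end{equation*}
with $(ac_p,p)=1$. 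It therefore suffices to prove the claimed bound separately for each prime power, treating $p\mid d$ (contributing to $q_1$) and $p\nmid d$ (contributing to $q_2$) in turn.

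For $p\mid d$, the residue $ds+r\pmod p$ is $r\pmod p$ independent of $s$, so either the sum is empty (when $p\mid r$) or entirely unrestricted (when $p\nmid r$). In the latter case Lemma \ref{gauss2} gives the bound $\ll_j \gcd(\textnormal{cont}(g),p^{k_p})^{1/j}p^{k_p(1-1/j)}$, which matches the $\gcd(\textnormal{cont}(g),q_1)$ factor after multiplying over such primes. For $p\nmid d$, the condition $p\nmid ds+r$ removes exactly one residue class modulo $p$, namely $s\equiv s_0\pmod p$ with $s_0\equiv-r/d\pmod p$, so by inclusion-exclusion the sum equals
\begin{equation*}
\sum_{s\pmod{p^{k_p}}}e^{2\pi\i g(s)ac_p/p^{k_p}}-\sum_{\substack{s\pmod{p^{k_p}}\\ s\equiv s_0\pmod p}}e^{2\pi\i g(s)ac_p/p^{k_p}}.
\end{equation*}
The first term is an unrestricted Gauss sum, bounded by Lemma \ref{gauss2} and the trivial inequality $\gcd(\textnormal{cont}(g),p^{k_p})\leq\gcd(a_j,p^{k_p})$. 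For the second, substitute $s=s_0+pt$ with $t\in\{0,\dots,p^{k_p-1}-1\}$; expanding $g(s_0+pt)=g(s_0)+p\,\widetilde g(t)$ produces, after pulling the $g(s_0)$-phase outside, a complete Gauss sum modulo $p^{k_p-1}$ of a polynomial $\widetilde g$ of degree $j$ whose leading coefficient is $p^{j-1}a_j$. Lemma \ref{gauss2} applied to $\widetilde g$ gives a bound involving $\gcd(p^{j-1}a_j,p^{k_p-1})^{1/j}p^{(k_p-1)(1-1/j)}$.

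Multiplying over all $p\mid q$ and using multiplicativity to recombine the local gcd's into $\gcd(\textnormal{cont}(g),q_1)^{1/j}\gcd(a_j,q_2)^{1/j}$ gives the claimed bound, with the factor of two from inclusion-exclusion absorbed into $C^{\omega(q)}$. The main obstacle is the bookkeeping for the substituted sum in the $p\nmid d$ case: one must verify, by splitting into the regimes $k_p\geq j$ and $k_p<j$, that the local bound $\gcd(p^{j-1}a_j,p^{k_p-1})^{1/j}p^{(k_p-1)(1-1/j)}$ is at most a constant (depending on $j$) times $\gcd(a_j,p^{k_p})^{1/j}p^{k_p(1-1/j)}$. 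In the first regime this is a direct algebraic identity after pulling out $p^{j-1}$; in the second the gcd collapses to $p^{k_p-1}$ and the claim reduces to $p^{k_p-1}\leq p^{k_p(1-1/j)}$, i.e.\ $k_p\leq j$, which is precisely the hypothesis of that regime.
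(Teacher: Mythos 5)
Your proof is correct and follows essentially the same route as the paper's: reduction to prime powers via multiplicativity, the case split on $p\mid d$ versus $p\nmid d$, inclusion--exclusion to remove the single bad residue class, the substitution $s=s_0+pt$ producing a complete sum of $\widetilde g$ modulo $p^{k_p-1}$, and Lemma \ref{gauss2} throughout. Your final bookkeeping with the leading coefficient $p^{j-1}a_j$ and the two regimes $k_p\geq j$ and $k_p<j$ is in fact more careful than the paper's (which states the intermediate gcd bound with a typo), so nothing further is needed.
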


\begin{proof}
Fix $g, d, r,a,q$ as in Lemma \ref{gsPI}. As is often the case with this type of sum, we can simplify our argument by taking advantage of multiplicativity. Specifically, it is not difficult to show that if  $q=q_1q_2$ with $(q_1,q_2)=1$, then  
\begin{equation*}\sum_{\substack{s=0 \\ (ds+r,q)=1}}^{q-1} e^{2\pi \text{i} g(s) a/q}= \Bigg( \sum_{\substack{s_1=0 \\ (ds_1+r,q_1)=1}}^{q_1-1} e^{2\pi \text{i} g(s_1) a_1/q_1} \Bigg) \Bigg(\sum_{\substack{s_2=0 \\ (ds_2+r,q_2)=1}}^{q_2-1} e^{2\pi \text{i} g(s_2) a_2/q_2}\Bigg),
\end{equation*} where $a/q=a_1/q_1+a_2/q_2$, so we can assume $q=p^{v}$ for some $p\in \P$, $v \in \N$. If $p \mid d$ and $p \mid r$, then $ds+r$ is never coprime to $p^{v}$, so the sum is clearly zero. If $p \mid d$ and $p \nmid r$, then $ds+r$ is always coprime to $p^{v}$, so the sum is complete and the result follows from Lemma \ref{gauss2}. If $p \nmid d$, then $p \mid ds+r$ if and only if $s\equiv -rd^{-1}$ mod $p$. Therefore, 
\begin{equation} \sum_{\substack{s=0 \\ p\nmid ds+r}}^{p^{v}-1} e^{2\pi \text{i} g(s) a/p^{v}}= \sum_{s=0}^{p^{v}-1} e^{2\pi \text{i} g(s) a/p^{v}}- \sum_{s'=0}^{p^{v-1}-1} e^{2\pi \text{i} g(ps'+m) a / p^{v}},
\end{equation} where $m \equiv -rd^{-1}$ mod $p$, and by Lemma \ref{gauss2} we need only obtain the estimate for the second sum. Setting 
\begin{equation*} \tilde{g}(s)=\frac{g(ps+m)-g(m)}{p},
\end{equation*}
we see that $\tilde{g}$ is a polynomial with integer coefficients and leading coefficient $a_jp^{v-1}$. In particular, 
\begin{equation*}\gcd(\text{cont}(\tilde{g}),p^{v-1}) \leq p^{v-1}\gcd(a_j,p^{v-1}).\end{equation*} Therefore, by Lemma \ref{gauss2} we have
\begin{align*}\Big|\sum_{s=0}^{p^{v-1}-1} e^{2\pi \text{i} g(ps+m) a / p^{v}}\Big|&= \Big|\sum_{s=0}^{p^{v-1}-1}e^{2\pi \text{i} (f(ps+m)-f(m)) a / p^{v}}\Big| \\ &= \Big|\sum_{s=0}^{p^{v-1}-1}e^{2\pi \text{i}\tilde{g}(s)a/p^{v-1}} \Big| \\& \ll_j \Big(p^{v-1}\gcd(a_j,p^{v-1})\Big)^{1/j}p^{(v-1)(1-1/j)} \\&\leq \gcd(a_j,p^{v})^{1/j}p^{v(1-1/j)},
\end{align*}
as required.
\end{proof}

\noindent \textit{Remark.} The factor of $C^{\omega(q)}$ in the conclusion of Lemma \ref{gsPI} arises when exploiting multiplicativity after decomposing $q$ into a product of prime powers. In the published version of \cite{Rice}, this factor is incorrectly absent in the corresponding Lemma 9, while the identical proof is provided. Fortunately, this oversight has no bearing on the main results of that paper (a corrected version has been uploaded to the arxiv server), but the distinction is relevant in certain cases here.

\noindent In the case that $\ell=k_1=1$, that is to say the case of $p\pm 1$ differences or a fixed multiple thereof, we can evaluate the relevant local sum precisely, showing that it has magnitude at most $1$. Here we restrict to the $p-1$ case, and the $p+1$ case is analogous.

\begin{lemma} \label{mob} If $(a,q)=1$ and $d \in \Z$, then $$\sum_{\substack{s=0 \\ (ds+1,q)=1}}^{q-1}e^{2\pi \textnormal{i}sa/q}=\begin{cases} \mu(q)e^{-2\pi \textnormal{i}ja/q} &\text{if } (d,q)=1, \text{ where } j \equiv d^{-1} \textnormal{ mod } q \\ 0 &\text{else} \end{cases},$$
where $\mu$ is the M\"obius function.
\end{lemma}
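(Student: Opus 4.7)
The plan is to expand the restriction $(ds+1,q)=1$ via the Möbius identity $\mathbf{1}_{(n,q)=1}=\sum_{e\mid\gcd(n,q)}\mu(e)$, swap the order of summation, and then evaluate each resulting inner sum as a geometric progression. Writing
$$T:=\sum_{\substack{s=0 \\ (ds+1,q)=1}}^{q-1}e^{2\pi\textnormal{i}sa/q}=\sum_{e\mid q}\mu(e)\sum_{\substack{s=0 \\ e\mid ds+1}}^{q-1}e^{2\pi\textnormal{i}sa/q},$$
the inner condition $ds\equiv -1\pmod{e}$ is solvable iff $(d,e)=1$, in which case its solution set inside $[0,q-1]$ is an arithmetic progression of step $e$ and length $q/e$.

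Assuming $(d,e)=1$, let $d_e^{-1}$ denote the inverse of $d$ modulo $e$. Parametrizing $s=-d_e^{-1}+et$ with $0\leq t<q/e$, the inner sum factors as
$$e^{-2\pi\textnormal{i}d_e^{-1}a/q}\sum_{t=0}^{q/e-1}e^{2\pi\textnormal{i}t\,ea/q}.$$
Because $(a,q)=1$, the geometric progression in $t$ is a sum of all $(q/e)$-th roots of unity of the primitive character $t\mapsto e^{2\pi\textnormal{i}ta/(q/e)}$, which vanishes unless $q/e=1$, i.e.\ unless $e=q$.

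Consequently only the $e=q$ term survives. If $(d,q)>1$ the congruence $ds\equiv -1\pmod{q}$ has no solution, so the inner sum is empty and $T=0$, recovering the second case. If $(d,q)=1$ then $s\equiv -j\pmod{q}$ with $j\equiv d^{-1}\pmod{q}$, leaving exactly one term in the range, so $T=\mu(q)e^{-2\pi\textnormal{i}ja/q}$, as claimed.

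No genuine obstacle arises here; the only mild subtlety is verifying that when the parameter $d$ shares factors with some divisors $e$ of $q$ but not with $q$ itself, those intermediate terms drop out for the geometric reason above (so the formula is insensitive to partial coprimality of $d$). This is automatic once the sums are arranged as above, so the proof reduces to the bookkeeping sketched in the two paragraphs above.
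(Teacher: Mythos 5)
Your proof is correct, but it takes a genuinely different route from the paper. The paper first splits the sum multiplicatively via the Chinese Remainder Theorem to reduce to $q=p^v$, then for $p\nmid d$ substitutes $r=ds+1$ to convert the sum into a phase times the Ramanujan sum $\sum_{p\nmid r}e^{2\pi \textnormal{i} rja/p^v}$, which is evaluated by the standard prime-power identity (and for $p\mid d$ the sum is complete and vanishes by orthogonality). You instead expand the indicator $\mathbf{1}_{(ds+1,q)=1}=\sum_{e\mid \gcd(ds+1,q)}\mu(e)$, swap the order of summation, and kill every term with $e<q$ by observing that the inner sum runs over a full congruence class modulo $e$ inside $\Z/q\Z$ and is therefore a vanishing geometric series (since $(a,q)=1$ forces $q/e\nmid a$ unless $e=q$); only the $e=q$ term survives, giving both cases at once. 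Your argument is more self-contained: it avoids the multiplicative splitting (which the paper merely asserts is ``not difficult to show'') and the prime-power Ramanujan evaluation, needing only orthogonality of additive characters — indeed it amounts to re-deriving the Ramanujan sum formula in situ. The trade-off is that your geometric-series step relies on the summand being a linear character of $s$, whereas the paper's CRT-plus-prime-power template is the same one it uses for the genuinely polynomial Gauss sums in Lemma \ref{gsPI}, so the paper's route is the one that generalizes. One cosmetic point: your representatives $s=-d_e^{-1}+et$ need not lie in $[0,q-1]$, but since $e^{2\pi\textnormal{i}sa/q}$ depends only on $s\bmod q$ and your parametrization hits exactly the residues congruent to $-d_e^{-1}$ modulo $e$, the sum is unaffected.
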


\begin{proof}Again we exploit multiplicativity, and we see from the Chinese Remainder Theorem that if $(a,q)=1$ and $q=q_1q_2$ with $(q_1,q_2)=1$, then 
$$ \sum_{\substack{r=0 \\ (ds+1,q)=1}}^{q-1}e^{2\pi \textnormal{i}sa/q}=\sum_{\substack{r_1=0 \\ (ds_1+1,q_1)=1}}^{q_1-1}e^{2\pi \textnormal{i}s_1a_1/q_1}\cdot \sum_{\substack{s_2=0 \\ (ds_2+1,q_2)=1}}^{q_2-1}e^{2\pi \textnormal{i}s_2a_2/q_2},$$
where $a/q=a_1/q_1+a_2/q_2$. Therefore, we can assume $q=p^v$ for $p\in \P$ and $v\in \N$. If $p\mid d$, then we always have $(ds+1,p^v)=1$, so the exponential sum is complete and equal to $0$ by orthogonality.  

\noindent If $p\nmid d$, then we can change variables in the sum setting $r=ds+1$, which yields
$$\sum_{\substack{r=0 \\ p \nmid r}}^{p^v-1} e^{2\pi \textnormal{i} (r-1)j a/p^v},$$ where $j \equiv d^{-1}$ mod $p^v$, so the lemma follows from the identity 
$$\sum_{\substack{r=0 \\ p \nmid r}}^{p^v-1} e^{2\pi \textnormal{i} ra/p^v}=\begin{cases} -1 &\text{if }v=1 \\ 0 &\text{else} \end{cases},$$ which again follows from orthogonality. 
\end{proof}

\noindent In place of Lemma \ref{weyl2}, we invoke the following prime input analog, which is a less precise, only nominally generalized version of Theorem 4.1 in \cite{lipan}.

\begin{lemma}[Lemma 4 in \cite{CLR}] \label{PImin} Suppose $g(x)=a_0+a_1x+\cdots+a_{j}x^{j} \in \Z[x]$,  and let $J=64j^24^{j}$. If $U\geq \log X$,
$|a_{j}| \geq C\Big(|a_{\ell-1}| + \cdots +|a_0|\Big),$ and $|d|,|r|,|a_{j}| \leq U^{j}$, then 
\begin{equation*}\sum_{\substack{n=1 \\ dn+r \text{ prime}}}^X \log(dn+r)e^{2\pi \textnormal{i} g(n) \alpha} \ll_C \frac{X}{U}+U^JX^{1-4^{-j}}
\end{equation*} 
provided \begin{equation*} |\alpha -a/q| < q^{-2} \quad \text{for some} \quad U^{J} \leq q \leq h(X)/U^{J} \quad \text{and} \quad (a,q)=1.
\end{equation*}
\end{lemma}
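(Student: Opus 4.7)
The plan is to establish this as a Vinogradov-type minor arc estimate for shifted primes, twisted by a polynomial phase, using a Vaughan-identity decomposition combined with a polynomial Weyl inequality. First, I would change variables by writing $m=dn+r$, so that the sum becomes
\[
\sum_{\substack{m \leq dX+r \\ m\text{ prime} \\ m\equiv r\,(\text{mod }d)}} \log(m)\, e^{2\pi \i\, g((m-r)/d)\alpha}.
\]
The phase $g((m-r)/d)\alpha$ is a polynomial of degree $j$ in $m$ whose leading coefficient is $a_j\alpha/d^j$; the domination hypothesis $|a_j|\geq C(|a_{j-1}|+\cdots+|a_0|)$ guarantees that the ``effective'' leading term in $m$ dominates the lower-order contributions uniformly. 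Replacing $\log(m)\cdot 1_{\text{prime}}$ by $\Lambda(m)$ (with the standard, negligible prime-power error) converts this to a weighted exponential sum over an arithmetic progression.

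Next, I would apply Vaughan's identity with parameter $V=U^{J/2}$ (or a similar choice) to decompose $\Lambda$ into a combination of Type I sums $\sum_{m\leq M}\mu(m)\sum_{n\leq N/m} F(mn)$ and Type II bilinear sums $\sum_{m}\sum_{n}\alpha_m\beta_n F(mn)$, where $F$ is the phase with the progression restriction. The arithmetic progression condition $m\equiv r\,(\text{mod }d)$ can be built into the Type I / Type II pieces (since $d\leq U^j$ is small compared to $X$), or handled via orthogonality over characters mod $d$ with negligible loss.

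For the Type I sums, after summing the inner geometric-like progression, one is reduced to a single Weyl-type sum of length roughly $N/m$ with polynomial phase. Weyl's inequality (in the polynomial form of Lemma \ref{weyl2}, or equivalently the CLR-style variant cited in the paper) applied with the denominator $q$ in the range $[U^J,\,X^j/U^J]$ gives a saving of $(q^{-1}+m/X+q/(a_jX^j))^{4^{-j}}$, which after summing over $m\leq V$ yields a contribution bounded by $X/U+U^J X^{1-4^{-j}}$; the domination condition $|a_j|\geq C\sum_{i<j}|a_i|$ is what allows one to absorb the lower-order coefficients of $g$ into the implied constants. The Type II sums are handled by Cauchy–Schwarz in the $m$-variable, producing a double sum $\sum_{n,n'}\sum_m e^{2\pi \i(g_n(m)-g_{n'}(m))\alpha}$ whose inner Weyl sum is nontrivial as long as $n\neq n'$, again by the Diophantine hypothesis on $\alpha$ and the same polynomial Weyl inequality.

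The main obstacle will be the bookkeeping of uniformity in $d$ and $r$: since $d$ and $r$ are allowed to grow up to $U^j$, the polynomial $g((m-r)/d)\alpha$ has ``effective'' leading frequency $a_j\alpha/d^j$, and one must check that the Diophantine approximation hypothesis transfers cleanly between $\alpha$ and $a_j\alpha/d^j$ within the stated $q$-window $[U^J,X^j/U^J]$ (here $J=64j^24^j$ is chosen precisely so the range remains nonempty and absorbs the $(d^j)$-scaling in the Type I/II estimates). Once this scaling is tracked, combining the Type I and Type II bounds with the standard Vaughan error term yields the claimed bound $X/U+U^JX^{1-4^{-j}}$, and the argument is essentially a shifted-prime adaptation of Vinogradov's classical minor arc estimate, with the polynomial phase accommodated via Lemma \ref{weyl2}.
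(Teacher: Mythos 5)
This lemma is not proved in the paper at all: it is imported wholesale as Lemma~4 of \cite{CLR}, which the authors note is itself a mild generalization of Theorem~4.1 of \cite{lipan}. Your Vaughan-identity-plus-Weyl outline is indeed the strategy those sources use, so you have correctly identified the right machinery; but as written your proposal is a plan rather than a proof, and the two places where you defer the work are exactly the places where the real difficulty lives.

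First, the substitution $m=dn+r$ turns the phase into $g((m-r)/d)\alpha$, a polynomial in $m$ with \emph{rational} coefficients of denominator $d^{j}$, so Weyl's inequality does not apply to it directly and the Diophantine hypothesis on $\alpha$ does not transfer ``cleanly'' as you hope. The standard fix (and the one used in \cite{lipan}) is to keep $n$ as the summation variable, apply Vaughan's identity to $\Lambda(dn+r)$, and then inside each Type~I/II range restrict the inner variable to a residue class modulo $d$ (and modulo the outer variable) so that the phase becomes an honest integer polynomial of the shape $g(s+mk)$ in $k$, with leading coefficient $a_{j}m^{j}$; it is this step that consumes the hypotheses $|d|,|r|,|a_j|\leq U^{j}$ and dictates the size of $J=64j^{2}4^{j}$. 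Second, in the Type~II sums the phase after Cauchy--Schwarz has leading coefficient $a_{j}(n^{j}-(n')^{j})\alpha$, and the assumption $|\alpha-a/q|<q^{-2}$ with $q$ in the stated window does \emph{not} automatically furnish a usable rational approximation to $a_{j}(n^{j}-(n')^{j})\alpha$ for every pair $n\neq n'$; one must run a counting argument bounding the number of exceptional pairs for which the induced denominator falls outside the admissible range, and this is where the saving $4^{-j}$ (rather than the Weyl exponent $2^{1-j}$) actually comes from. Until those two steps are carried out, the proposal is an accurate roadmap to the proof in \cite{CLR}/\cite{lipan} but not a proof.
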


\subsection{Proof of (\ref{WmajPI}) and (\ref{WminPI})} We return to the setting of the proof of Lemma \ref{PIL2}, recalling all assumptions,  notation, and fixed parameters. Here we establish (\ref{WmajPI}) and (\ref{WminPI}) by mimicking the proofs of (\ref{SmajII}) and (\ref{SminII}). The details can be fleshed out by referring to section \ref{A1}, and also applying (\ref{m/q3K}) when necessary.    
 
\noindent Fixing $\alpha \in \T$, the pigeonhole principle guarantees the existence of $1\leq q \leq L/(q_0\cQ)^{2K}$ and $(a,q)=1$ with $$\Big|\alpha-\frac{a}{q}\Big|<\frac{(q_0\cQ)^{2K}}{qL}.$$
Letting $\beta=\alpha-a/q$, we have that if $q\leq (q_0\cQ)^{2K}$, then (\ref{WmajPI}) follows from Lemmas \ref{WasymPI}-\ref{mob}. In particular, (\ref{WmajPI}) holds if $q\leq Q$ and $|\beta|<\gamma$. 

\noindent If $q\leq Q$ and $\gamma < |\beta| \leq \dfrac{(q_0\cQ)^{2K}}{qL}$, then (\ref{WminPI}) follows from Lemma \ref{WasymPI}, Lemma \ref{osc}, and integration by parts. If $Q < q \leq (q_0\cQ)^{2K}$, then, as previously mentioned, (\ref{WmajPI}) holds, and in this range (\ref{WmajPI}) implies (\ref{WminPI}). 

\noindent Finally, if $(q_0\cQ)^{2K}<q\leq L/(q_0\cQ)^{2K}$, then (\ref{WminPI}) follows from applying Lemma \ref{PImin} with $U=(q_0\cQ)^2$. \qed


\begin{thebibliography}{10}  
 


\bibitem{BPPS} 
{\sc A. Balog, J. Pelik\'an, J. Pintz, E. Szemer\'edi}, {\em Difference sets without $\kappa$-th powers}, Acta. Math. Hungar. 65 (2) (1994), 165-187.


  
\bibitem{Chen}
{\sc J.R. Chen}, {\em On Professor Hua's estimate of exponential sums}, Sci. Sinica 20 (1977), 711-719.

 \bibitem{CLR}
{\sc E. Croot, N. Lyall, A. Rice}, {\em Polynomials and primes in generalized arithmetic progressions}, Int. Math. Res. Not., no. 15 (2015), 6021-6043.

\bibitem{Furst}
{\sc H. Furstenberg}, {\em Ergodic behavior of diagonal measures and a theorem of {S}zemer\'edi on arithmetic progressions},
  J. d'Analyse Math, 71 (1977), 204-256.
 
\bibitem{Green} 
{\sc B. Green}, {\em On arithmetic structures in dense sets of integers}, Duke Math. Jour. 114 (2002) no.2, 215-238.

\bibitem{taoblog}
{\sc B. Green, T. Tao, T. Ziegler}, {\em A Fourier-free proof of the Furstenberg-S\'ark\"ozy theorem}, https://terrytao.wordpress.com/2013/02/28/a-fourier-free-proof-of-the-furstenberg-sarkozy-theorem/.


\bibitem{HR}
{\sc H. Halberstam, H.E. Richert}, {\em Sieve Methods,} Academic Press, 1974.

\bibitem{HLR} 
{\sc M. Hamel, N. Lyall, A. Rice}, {\em Improved bounds on S\'ark\"ozy's theorem for quadratic polynomials}, Int. Math. Res. Not. no. 8 (2013), 1761-1782 

\bibitem{KMF}
{\sc T. Kamae, M. Mend\`es France}, {\em van der Corput's difference theorem}, Israel J. Math. 31, no. 3-4, (1978), pp. 335-342.
 
\bibitem{Lewko}
{\sc M. Lewko}, {\em An improved lower bound related to the S\'ark\"ozy-Furstenberg Theorem}, Electron. J. Combin. 22 (2015), No. 32, 1-6.


\bibitem{lipan} 
{\sc H.-Z. Li, H. Pan}, {\em Difference sets and polynomials of prime variables}, Acta. Arith. 138, no. 1 (2009), 25-52.

\bibitem{Lucier2}
{\sc J. Lucier}, {\em Difference sets and shifted primes}, Acta. Math. Hungar. 120 (2008), 79-102.
   
\bibitem{Lucier}
{\sc J. Lucier}, {\em Intersective sets given by a polynomial}, Acta Arith. 123 (2006), 57-95.

\bibitem{Lyall}
{\sc N. Lyall}, {\em A new proof of S\'ark\"ozy's theorem}, Proc. Amer. Math. Soc. 141 (2013), 2253-2264.

\bibitem{LM} 
{\sc N. Lyall, \`A. Magyar}, {\em Polynomial configurations in difference sets}, J. Number Theory 129 (2009), 439-450.

\bibitem{MV} 
{\sc H. L. Montgomery, R. C. Vaughan}, {\em Multiplicative Number Theory I}, Cambridge Studies in Adv. Math. 97, 2007. 

\bibitem{PSS}
{\sc J. Pintz, W. L. Steiger, E. Szemer\'edi}, {\em On sets of natural numbers whose difference set contains no squares}, J. London Math. Soc. 37 (1988),  219-231.

\bibitem{thesis}
{\sc A. Rice}, {\em Improvements and extensions of two theorems of S\'ark\"ozy}, Ph.D. thesis, University of Georgia, 2012. http://alexricemath.com/wp-content/uploads/2013/06/AlexThesis.pdf. 

\bibitem{Rice} 
{\sc A. Rice}, {\em S\'ark\"ozy's theorem for $\P$-intersective polynomials}, Acta Arith. 157 (2013), no. 1, 69-89.

\bibitem{Ruz2}
{\sc I. Ruzsa}, {\em Difference sets without squares}, Period. Math. Hungar. 15 (1984), 205-209.

\bibitem{Ruz3}
{\sc I. Ruzsa}, {\em On measures on intersectivity}, Acta Math. Hungar. 43(3-4) (1984), 335-340.

\bibitem{Ruz}
{\sc I. Ruzsa, T. Sanders}, {\em Difference sets and the primes}, Acta. Arith. 131, no. 3 (2008), 281-301.
 
\bibitem{Sark1}
{\sc A. S\'ark\"ozy}, {\em On difference sets of sequences of integers I}, Acta. Math. Hungar. 31(1-2) (1978), 125-149.

\bibitem{Sark3}
{\sc A. S\'ark\"ozy}, {\em On difference sets of sequences of integers III}, Acta. Math. Hungar. 31(3-4) (1978), 355-386.

\bibitem{Selberg}
{\sc A. Selberg}, {\em A note on a paper of J. G. Sathe}, J. Indian Math. Soc. 18 (1954), 83-87.

\bibitem{Shpar}
{\sc I. Shparlinski}, {\em On exponential sums with sparse polynomials and rational functions}, Journal of Number Theory 60, no. 2 (1996), 233-244.

\bibitem{Slip} {\sc S. Slijep\v{c}evi\'c}, {\em A polynomial S\'ark\"ozy-Furstenberg theorem with upper bounds}, Acta Math. Hungar. 98 (2003),  275-280.


\bibitem{vaughan}
{\sc R. C. Vaughan}, {\em The Hardy-Littlewood method}, Cambridge University Press, Second Edition, 1997.

\bibitem{Wooley} {\sc T.D. Wooley}, {\em Vinogradov's mean value theorem via efficient congruencing, II}, Duke Math. J. 162 (2013), 673-730.
\end{thebibliography}
\end{document}